\theoremstyle{plain} 
\newtheorem{theorem}{Theorem}[section]
\newtheorem{corollary}[theorem]{Corollary}
\newtheorem{lemma}[theorem]{Lemma}
\newtheorem{proposition}[theorem]{Proposition}
\theoremstyle{definition}
\newtheorem{definition}{Definition}[section]
\DeclareMathOperator{\tr}{\mathrm{tr}}
\DeclareMathOperator{\supp}{\mathrm{supp}}
\DeclareMathOperator{\argmin}{\mathrm{arg\, min}}
\begin{document}

\title[Strong law of large numbers for the $L^1$-Karcher mean]{Strong law of large numbers for the $L^1$-Karcher mean}
\author[Yongdo Lim and Mikl\'os P\'alfia]{Yongdo Lim and Mikl\'os P\'alfia}
\address{Department of Mathematics, Sungkyunkwan University, Suwon 440-746, Korea.}
\email{ylim@skku.edu}
\address{Department of Mathematics, Sungkyunkwan University, Suwon 440-746, Korea and Bolyai Institute, Interdisciplinary Excellence Centre, University of Szeged, H-6720 Szeged, Hungary.}
\email{palfia.miklos@aut.bme.hu}

\subjclass[2000]{Primary 47A56, 47A64, 60F15 Secondary 58B20}
\keywords{Karcher mean, Sturm's law of large numbers, nodice theorem}

\date{\today}

\begin{abstract}
Sturm's strong law of large numbers in $\mathrm{CAT}(0)$ spaces has been an influential tool to study the geometric mean or also called Karcher barycenter of positive definite matrices. It provides an easily computable stochastic approximation based on inductive means. Convergence of a deterministic version of this approximation has been proved by Holbrook, providing his "nodice" theorem for the Karcher mean of positive definite matrices. The Karcher mean has also been extended to the infinite dimensional case of positive operators on a Hilbert space by Lawson-Lim and then to probability measures with bounded support by the second author, however the $\mathrm{CAT}(0)$ property of the space is lost and one defines the mean as the unique solution of a nonlinear operator equation on a convex Banach-Finsler manifold. The formulations of Sturm's strong law of large numbers and Holbrook's "nodice" approximation are natural and both conjectured to converge, however all previous techniques of their proofs break down, due to the Banach-Finsler nature of the space. In this paper we prove both conjectures by establishing the most general $L^1$-form of Sturm's strong law of large numbers and Holbrook's "nodice" theorem in the operator norm by developing a stochastic discrete-time resolvent flow for the Karcher barycenter using its Wasserstein contraction property.
\end{abstract}

\maketitle

\section{The contractive barycenter of positive operators}
Let $\mathbb{S}$ denote the vector space of self-adjoint operators equipped with the operator norm $\|\cdot\|$ on a Hilbert space $\mathcal{H}$ and let $\mathbb{P}\subset\mathbb{S}$ denote the cone of invertible positive definite operators. On $\mathbb{S}$ the closure $\overline{\mathbb{P}}$ of the cone generates the positive definite partial order $\leq$ also called the Loewner order. When $\mathcal{H}$ is finite dimensional, then $\mathbb{P}$ is the convex cone of positive definite matrices and it comes equipped with the natural trace metric
\begin{equation*}
\langle X,Y\rangle_A:=\tr\{A^{-1}XA^{-1}Y\}
\end{equation*}
for $A\in\mathbb{P}$ and $X,Y\in\mathbb{S}$. This Riemannian metric has its distance function of the form
\begin{equation*}
d(A,B)=\|\log(A^{-1/2}BA^{-1/2})\|_2
\end{equation*}
for $A,B\in\mathbb{P}$ and the Frobenius $2$-norm $\|X\|_2:=\sqrt{\tr\{X^*X\}}$. Means of elements of $\mathbb{P}$ and multiplicative ergodic theorems on it were studied in a large number of papers, see for example \cite{Antezana,bhatiaholbrook,bhatiakarandikar,Karlsson,lawsonlim,lawsonlim1,lawsonlim12,limpalfia,limpalfia2}. The metric space $(\mathbb{P},d)$ is a complete $\mathrm{CAT}(0)$ space, in particular it has nonpositive sectional curvature \cite{bhatiaholbrook}. This is equivalent to the $2$-convexity of $X\mapsto d^2(X,A)$, so a natural mean on the space is the \emph{Frechet} or \emph{Karcher} barycenter defined as
\begin{equation}\label{eq:KarcherMean}
\Lambda(\mu):=\argmin_{X\in\mathbb{P}}\int_{\mathbb{P}}d^2(X,A)-d^2(Y,A)d\mu(A)
\end{equation}
for any Borel probability measure $\mu$ that integrates the distance $d(X,Y)$ for a fixed (thus all) $Y\in\mathbb{P}$. This definition is natural from the point of view of $\mathrm{CAT}(0)$ spaces, thus adopted by Sturm \cite{sturm} in that setting, and he proved a nice generalization of the law of large numbers, which states the almost sure convergence of the stochastic inductive mean sequence $\{S_n\}_{n\in\mathbb{N}}$ to $\Lambda(\mu)$, where $S_n$ is defined recursively as $S_1:=Y_1$, 
\begin{equation}\label{eq:InductiveMeans}
S_{n+1}:=S_n\#_{\frac{1}{n+1}}Y_{n+1}
\end{equation}
for i.i.d. random variables $Y_n$ with law $\mu$ that has bounded support, where $t\in[0,1]\mapsto A\#_tB$ denotes the unique minimal geodesic connecting $A$ and $B$. Moreover Sturm also derived that the barycenter $\Lambda$ is $1$-Lipschitz with respect to the $L^1$-Wasserstein distance $W_1$.

It turns out that $\#_t$ admits a nice closed formula in the particular case of $\mathbb{P}$. We have
\begin{equation*}
A\#_tB=A^{1/2}\left(A^{-1/2}BA^{-1/2}\right)^tA^{1/2}=A\left(A^{-1}B\right)^t,
\end{equation*}
the weighted geometric mean of positive operators $A,B\in\mathbb{P}$, which is monotone \cite{bhatiaholbrook} with respect to the partial order $\leq$ generated by the cone $\mathbb{P}$. Thus Sturm's law of large numbers was used by Lawson-Lim \cite{lawsonlim} and Bhatia-Karandikar \cite{bhatiakarandikar} to prove the monotonicity of $\Lambda(\sum_{i=0}^{k-1}\frac{1}{k}\delta_{A_i})$ in $\leq$ with respect to the variables $A_i\in\mathbb{P}$, then an important conjecture in matrix analysis. Later a deterministic, also called "nodice", version of Sturm's law that periodically recycles all the points $A_i$ was proved by Holbrook \cite{Hol} in $\mathbb{P}$ and then in the $\mathrm{CAT}(0)$ metric setting by the authors \cite{limpalfia2}, and independently in \cite{Ba2}. This "nodice" theorem states that $S_n$ converges to $\Lambda(\sum_{i=0}^{k-1}\frac{1}{k}\delta_{A_i})$ for the recycling deterministic version $Y_n:=A_{\overline{n}}$ in \eqref{eq:InductiveMeans}, where $\overline{n}$ denotes the residual of $n$ modulo $k$. Further generalizations were proved in the $\mathrm{CAT}(1)$ setting by Ohta and the second author \cite{ohta0} and by Yokota \cite{yokota}. These metric approaches treat the sequence of inductive means $S_n$ as a discrete-time approximation of the gradient flow of the cost function to be minimized in \eqref{eq:KarcherMean}, and apply the Riemannian-like nature of the $\mathrm{CAT}(1)$ property in an essential way. For further results on the continuous time metric theory of gradient flows see \cite{AGS,bacak,ohta}.

It turns out that this metric formulation of the mean $\Lambda$ is no longer possible in the operator case when $\mathcal{H}$ is infinite dimensional. In that setting the available metrics on $\mathbb{P}$ are no longer $2$-convex, thus far from being $\mathrm{CAT}(0)$. As a matter of fact, the natural metric on $\mathbb{P}$ turns out to be a modification of $d(\cdot,\cdot)$ in the form
\begin{equation}\label{eq:ThompsonMetric}
d_\infty(A,B):=\|\log(A^{-1/2}BA^{-1/2})\|=\mathrm{spr}\{\log(A^{-1}B)\}
\end{equation}
called the Thompson metric, which turns $(\mathbb{P},d_\infty)$ into a complete metric space such that the topology generated by $d_\infty$ agrees with the relative operator norm topology \cite{thompson}, where $\mathrm{spr}(X)$ denotes the spectral radius of $X$. This is also apparent from the inequality
\begin{equation}\label{eq:order}
e^{-d_\infty(X,Y)}\leq X^{-1/2}YX^{-1/2}\leq e^{d_\infty(X,Y)}
\end{equation}
for any $X,Y\in\mathbb{P}$ which is symmetric in $X$ and $Y$ \cite{palfia2}. Form the definition of $d_\infty$ through the spectral radius, one also derives
\begin{equation}\label{eq:invariance}
d_\infty(GXG^*,GYG^*)=d_\infty(X,Y)=d_\infty(X^{-1},Y^{-1})
\end{equation}
for any invertible $G\in\mathcal{B}(\mathcal{H})$. However there is a price to pay for the non-Hilbertian nature of $(\mathbb{S},\|\cdot\|)$ in this general case. The operator norm in \eqref{eq:ThompsonMetric} is an $L^{\infty}$-type, non-uniformly convex, non-smooth, non-differentiable norm, so in general $d_\infty$ admits infinitely many geodesics and thus $d_\infty^2$ is not uniformly convex either. Thus the metric formulation \eqref{eq:KarcherMean} of $\Lambda$ and all the corresponding metric theory of gradient flows \cite{AGS,bacak,ohta0,ohta,yokota} and proofs spectacularly break down, without such uniform convexity of the metric. In particular, even in a uniformly convex Banach space that is not Hilbert, the metric behavior of gradient curves of a convex function is not well understood at all \cite{AGS}. Thus Lawson-Lim \cite{lawsonlim1,lawsonlim12} building on \cite{limpalfia} adopted the critical point equation of the minimization problem in \eqref{eq:KarcherMean} to define $\Lambda$ in the general case as the unique solution $X\in\mathbb{P}$ of the \emph{Karcher equation}
\begin{equation*}
\int_{\mathbb{P}}\log_XAd\mu(A)=0
\end{equation*}
for $\mu=\sum_{i=1}^k\frac{1}{k}\delta_{A_i}$ with $A_i\in\mathbb{P}$ and $\log_XA:=X^{1/2}\log(X^{-1/2}AX^{-1/2})X^{1/2}$. Then monotonicity of $\Lambda$ in $\leq$ follows through an approximation of $\Lambda$ by the monotone family of power means \cite{limpalfia,lawsonlim1}. Then in \cite{limpalfia3} it was proved that this equation has a unique solution for fully supported Borel probability measures that integrate $d_\infty(\cdot,Y)$ and it preserves an appropriate generalization of the partial order $\leq$, called the \emph{stochastic order} of probability measures studied in detail in \cite{Hiai,lawson}.

The conjecture naturally arises that Sturm's strong law of large numbers for fully supported Borel probability measures that integrate $d_\infty$ should hold for the stochastic sequence $\{S_n\}_{n\in\mathbb{N}}$ in \eqref{eq:InductiveMeans} and its deterministic "nodice" counterpart by Holbrook \cite{Hol} in the $d_\infty$ topology. In this paper we prove both conjectures. The idea is to study the initial value problem
\begin{equation}\label{eq:IVP}
\dot{\gamma}(t)=\int_{\mathbb{P}}\log_{\gamma(t)}Ad\mu(A)
\end{equation}
for $t>0$, $\gamma(0)\in\mathbb{P}$ so that it recovers some of the Riemannian gradient flow structure of the minimization problem \eqref{eq:KarcherMean}. The idea to study such evolution problem related to $\Lambda$ first appeared recently in the work of the authors \cite{limpalfia3}, where it was used to confirm the conjecture in \cite{lawsonlim1} on the norm continuity of the family power means. In particular an important feature of a family of solution curves $\gamma(t):=S(t)\gamma(0)$ of \eqref{eq:IVP} is the exponential contraction property
\begin{equation*}
d_\infty\left(S(t)X,S(t)Y\right)\leq e^{-t}d_\infty(X,Y)
\end{equation*}
that follows from the Wasserstein contraction formula
\begin{equation*}
d_\infty(\Lambda(\mu),\Lambda(\nu))\leq W_1(\mu,\nu).
\end{equation*}

In section 2 we establish the necessary technical preliminaries, in section 3 we extend the domain of $\Lambda$ to probability measures with unbounded support. In section 4 we use the resolvent operator
\begin{equation*}
J_{\lambda}^{\mu}(X):=\Lambda\left(\frac{\lambda}{\lambda+1}\mu+\frac{1}{\lambda+1}\delta_X\right)\quad\text{for }\lambda>0
\end{equation*}
of \cite{limpalfia3} and prove a flexible metric inequality in Theorem~\ref{T:KobayashiEst} comparing distances of iterates of this map. This result itself can be viewed as a nonlinear generalizaton of an inequality for resolvent iterates in Banach spaces \cite{crandall} that have been successfully applied in the optimization society, see for instance in \cite{liu}. It also generalizes the estimates of \cite{limpalfia3}. As a consequence, we derive that the limit curve $S(t)X:=\lim_{n\to\infty}(J^\mu_{t/n})^n(X)$ exists and solves \eqref{eq:IVP}, and $S(t)\Lambda(\mu)=\Lambda(\mu)$ for $t\geq 0$. In section 5 we establish the necessary norm and $d_\infty$ estimates that then lead to the "nodice" Theorem~\ref{T:Nodice} proving the $d_\infty$ convergence of deterministic iterates $S_n$ of Holbrook in \eqref{eq:InductiveMeans} to $\Lambda(\sum_{i=0}^{k-1}\frac{1}{k}\delta_{A_i})$. Then building on this in section 6 we prove the almost sure convergence of $S_n$ to $\Lambda(\mu)$ in $d_\infty$ that confirms Sturm's strong law of large numbers in the most general $L^1$-case. In both cases our estimates reveal that for large enough $n$, $S_n$ provides a discrete time trajectory that stays close to some ODE curve $S(t_n)X_0$ which itself converges to $\Lambda(\mu)$ as $t_n\to\infty$.

We note that the $L^1$-integrability assumption on $d_\infty$ for $\mu$ is necessary for the almost sure convergence to hold in the strong law, since when $\dim{\mathcal{H}}=1$, the matrices in $\mathbb{P}$ commute and we have that the $\log:(0,\infty)\mapsto\mathbb{R}$ provides an isometry between the real line and $\mathbb{P}=(0,\infty)$, and there exists counterexamples to the almost sure convergence in the classical strong law in $\mathbb{R}$ when the expectation is not finite, see for example \cite{dudley,yokota}.

\section{Technical preliminaries}
In this paper we use the notation $X(\lambda)=O(\lambda)$ for $X(\lambda)\in\mathbb{S}$ in the sense that there exist constants $M,\lambda_0>0$, such that we have $\|X(\lambda)\|\leq M\lambda$ for all $0<\lambda\leq\lambda_0$ where $M$ does not depend on $\lambda$. This will be frequently used together with the Banach space version of Taylor's theorem and the mean value inequality for analytic functions.

The following result can be found in multiple resources, for a proof see for example \cite{lawson}.
\begin{proposition}\label{P:separableSupp}
Let $\mu$ be $\sigma$-additive Borel probability measures on $(\mathbb{P},d_\infty)$. Then the support $\supp(\mu)$ is separable. Moreover $\mu$ is fully supported, that is $\mu(\supp(\mu))=1$, if and only if $\mu$ is $\tau$-additive.
\end{proposition}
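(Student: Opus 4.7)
The plan is to handle the two statements separately, using a standard maximal-separation argument for the separability of $\supp(\mu)$ and then leveraging that for the equivalence. I would take the definition
\[
\supp(\mu)=\{X\in\mathbb{P}:\mu(V)>0\text{ for every open }V\ni X\},
\]
which is automatically closed; its complement $U=\mathbb{P}\setminus\supp(\mu)$ is then the union of all open sets of $\mu$-measure zero.

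For separability of $\supp(\mu)$, I would invoke Zorn's lemma for each $n\in\N$ to pick a maximal subset $D_n\subseteq\supp(\mu)$ whose points are pairwise $d_\infty$-separated by at least $1/n$. The balls $B(X,1/(2n))$ for $X\in D_n$ are then pairwise disjoint, and each has strictly positive $\mu$-mass since $X\in\supp(\mu)$. Bucketing $D_n$ by the layers $\{X\in D_n:1/(k+1)<\mu(B(X,1/(2n)))\leq 1/k\}$ for $k\in\N$ forces each layer to be finite (a disjoint union of sets of $\mu$-mass exceeding $1/(k+1)$ cannot have more than $k+1$ members inside a probability space), so $D_n$ is countable. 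By maximality $D_n$ is $(1/n)$-dense in $\supp(\mu)$, whence $D:=\bigcup_n D_n$ is a countable dense subset of $\supp(\mu)$.

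For the equivalence, the direction ``$\tau$-additive $\Rightarrow$ fully supported'' is immediate: the family $\mathcal{F}$ of open $\mu$-null sets is upward directed under finite unions, so $\tau$-additivity gives $\mu(U)=\mu(\bigcup\mathcal{F})=\sup_{V\in\mathcal{F}}\mu(V)=0$. For the converse, given an upward-directed family $\{U_\alpha\}$ of open sets with union $W$, the separability of $\supp(\mu)$ makes it Lindel\"of, so the open cover $\{U_\alpha\cap\supp(\mu)\}$ of $W\cap\supp(\mu)$ admits a countable subcover, which by directedness can be refined to a nested sequence $U_{\alpha_1}\subseteq U_{\alpha_2}\subseteq\cdots$ whose union still contains $W\cap\supp(\mu)$. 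Combining $\mu(\mathbb{P}\setminus\supp(\mu))=0$ with $\sigma$-additivity applied along this nested sequence yields $\mu(W)=\lim_n\mu(U_{\alpha_n})\leq\sup_\alpha\mu(U_\alpha)\leq\mu(W)$, which is exactly the $\tau$-additivity identity.

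The main obstacle is purely set-theoretic bookkeeping in the possibly non-separable ambient space $\mathbb{P}$: one must take care never to apply $\sigma$-additivity or Lindel\"of-type reasoning to an uncountable family without first passing to the (separable) support. Once the separability step is in place the rest is routine, and the only genuinely technical point is the cardinality estimate for the maximal $1/n$-net used in that first step.
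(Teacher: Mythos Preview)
Your argument is correct. The paper does not give its own proof of this proposition; it simply states the result and refers the reader to \cite{lawson}. So there is no ``paper's approach'' to compare against, and what you have supplied is in fact an addition rather than a reproduction.

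That said, your proof is the standard one and is cleanly organized. The maximal $1/n$-net argument for separability is exactly right: the disjointness of the radius-$1/(2n)$ balls together with the positivity of their measures forces $D_n$ to be countable via the bucketing trick, and maximality gives density. In the equivalence, the $\tau$-additive $\Rightarrow$ fully supported direction is immediate as you say, and for the converse you correctly exploit that separability of the support (a metric subspace) gives the Lindel\"of property there, so the uncountable directed family can be replaced by a countable increasing chain on which ordinary $\sigma$-additivity suffices. Your closing remark about where the non-separability of the ambient $\mathbb{P}$ could bite is also on point: one must pass to $\supp(\mu)$ before invoking any countability-based reasoning, and you do.

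One cosmetic point: in the bucketing step, a layer with $\mu(B(X,1/(2n)))>1/(k+1)$ can contain at most $k$ points (not $k+1$), since $k+1$ disjoint sets of mass exceeding $1/(k+1)$ would already overshoot total mass $1$. This does not affect the conclusion.
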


Let $\mathcal{P}^1(\mathbb{P})$ denote the convex set of $\tau$-additive Borel probability measures $\mu$ on $(\mathbb{P},\mathcal{B}(\mathbb{P}))$ such that $\int_{\mathbb{P}}d_\infty(X,A)d\mu(A)<+\infty$ for all $X\in\mathbb{P}$. We say that a sequence $\mu_n\in\mathcal{P}^1(\mathbb{P})$ is uniformly integrable if 
\begin{equation*}
\lim_{R\to \infty}\limsup_{n\to\infty}\int_{d_\infty(x,A)\geq R}d_\infty(x,A)d\mu_n(A)=0
\end{equation*}
for a (thus all) $x\in\mathbb{P}$. The $L^1$-Wasserstein distance between $\mu,\nu\in\mathcal{P}^1(\mathbb{P})$ is defined as
\begin{equation*}
W_1(\mu,\nu)=\inf_{\gamma\in\Pi(\mu,\nu)}\int_{\mathbb{P}\times\mathbb{P}}d_\infty(A,B)d\gamma(A,B)
\end{equation*}
where $\Pi(\mu,\nu)$ denotes the set of all $\tau$-additive Borel probability measures on the product space $\mathbb{P}\times\mathbb{P}$ with marginals $\mu$ and $\nu$. We consider only $\tau$-additive measures, since the following is not true in general for $\sigma$-additive Borel probability measures which are not fully supported, however its proof goes through for $\tau$-additive, equivalently fully supported, probability measures on compete metric spaces.

\begin{proposition}[Corollary 6.13. \cite{villani} \& Example 8.1.6., Theorem 8.10.45. \cite{bogachev}]\label{P:weakW1agree}
The topology generated by the Wasserstein metric $W_1(\cdot,\cdot)$ on $\mathcal{P}^1(\mathbb{P})$ agrees with the weak-$*$ (also called weak) topology of $\mathcal{P}^1(\mathbb{P})$ on uniformly integrable sequences of probability measures in $\mathcal{P}^1(\mathbb{P})$. Moreover finitely supported probability measures are $W_1$-dense in $\mathcal{P}^1(\mathbb{P})$.
\end{proposition}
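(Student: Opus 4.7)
The plan is to reduce both statements to the classical Villani/Bogachev results for Polish spaces by exploiting $\tau$-additivity. The direct obstacle is that $(\mathbb{P},d_\infty)$ itself need not be separable when $\mathcal{H}$ is infinite dimensional; however, restricting to $\mathcal{P}^1(\mathbb{P})$ forces supports to be separable by Proposition~\ref{P:separableSupp}, and this is precisely what makes the reduction work.

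For the topology agreement, one direction is immediate: if $W_1(\mu_n,\mu)\to 0$, then for any bounded $1$-Lipschitz $f:\mathbb{P}\to\mathbb{R}$ the Kantorovich--Rubinstein bound $|\int f\,d\mu_n-\int f\,d\mu|\leq W_1(\mu_n,\mu)$ yields weak convergence, while uniform integrability of the first moment follows by truncating against test functions of the form $(d_\infty(x,\cdot)-R)_+$. For the converse, I would assume $\mu_n\to\mu$ weakly with $\{\mu_n\}$ uniformly integrable and pass to
\[
\tilde{\mathbb{P}}:=\overline{\supp(\mu)\cup\bigcup_{n\in\mathbb{N}}\supp(\mu_n)},
\]
which is a countable union of separable sets by Proposition~\ref{P:separableSupp}, hence itself separable, and as a closed subset of the complete space $(\mathbb{P},d_\infty)$ it is Polish. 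All of the $\mu,\mu_n$ are concentrated on $\tilde{\mathbb{P}}$, so Villani's Corollary 6.13 applies there and yields $W_1$-convergence; since any $\tau$-additive coupling on $\tilde{\mathbb{P}}\times\tilde{\mathbb{P}}$ extends canonically by zero to a $\tau$-additive coupling on $\mathbb{P}\times\mathbb{P}$ of the same transport cost, the ambient $W_1$ agrees with the one computed on $\tilde{\mathbb{P}}$.

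For the density statement, I would fix $\mu\in\mathcal{P}^1(\mathbb{P})$ and $\varepsilon>0$, pick a base point $x\in\supp(\mu)$, and use the $L^1$-integrability hypothesis to choose $R>0$ with $\int_{\{d_\infty(x,A)>R\}}d_\infty(x,A)\,d\mu(A)<\varepsilon/2$. By separability of $\supp(\mu)$, the truncated support $\{A\in\supp(\mu):d_\infty(x,A)\leq R\}$ admits a countable Borel partition into cells $B_i$ of diameter at most $\varepsilon/2$; choosing representatives $A_i\in B_i$ and weights $p_i:=\mu(B_i)$, the finitely supported measure
\[
\nu_N:=\sum_{i=1}^{N}p_i\delta_{A_i}+\Bigl(1-\sum_{i=1}^{N}p_i\Bigr)\delta_x
\]
admits, for $N$ sufficiently large, an explicit coupling with $\mu$ transporting each $B_i$ ($i\leq N$) to $A_i$ and all remaining mass to $x$, whose total cost is bounded by $\varepsilon/2$ from the cell diameters, plus the $\varepsilon/2$ tail integral, plus an $R\cdot\mu(\bigcup_{i>N}B_i)=o_N(1)$ remainder.

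The main obstacle I anticipate is not the topology equivalence itself, which is essentially bookkeeping, but rather tracking $\tau$-additivity through couplings: optimal couplings in a non-separable metric space require tightness, which in turn requires $\tau$-additivity, and mere $\sigma$-additivity produces well-known counterexamples. Once one commits to the reduction to the separable closed subspace generated by the relevant supports, the rest proceeds from the arguments of Villani and Bogachev essentially verbatim; the contribution of the proposition is to repackage those facts into a workhorse usable on the genuinely non-separable ambient space $(\mathbb{P},d_\infty)$.
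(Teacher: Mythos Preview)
Your proposal is correct, and for the topology-equivalence part it follows essentially the same reduction as the paper: restrict to the closed separable hull of the supports via Proposition~\ref{P:separableSupp}, which is Polish, and invoke Villani's Theorem~6.9/Corollary~6.13 there. The paper constructs this hull as $\overline{\bigcup_k D_k}$ with $D_k$ a countable dense subset of $\supp(\mu_k)$, while you take $\overline{\bigcup_k\supp(\mu_k)}$ directly; these closures coincide, so the arguments are the same.

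For the $W_1$-density of finitely supported measures, however, you take a genuinely different route. The paper argues probabilistically: it invokes Varadarajan's theorem on the Polish space $(\supp(\mu),d_\infty)$ to get almost-sure weak convergence of the empirical measures $\mu_n=\frac{1}{n}\sum_{i=1}^n\delta_{Y_i}$ to $\mu$, checks uniform integrability, and then appeals to the first part of the proposition to upgrade weak convergence to $W_1$-convergence. Your argument is instead constructive: truncate the first moment, partition the bounded part of the support into cells of small diameter, and exhibit an explicit transport plan. Your approach is more elementary and entirely self-contained (no probabilistic input), while the paper's approach has the advantage of establishing, as a byproduct, the almost-sure $W_1$-convergence of empirical measures---a fact the paper reuses later in Lemma~\ref{L:empiricalMeasureW1}.
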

\begin{proof}
According to Theorem 6.9 and Corollary 6.13 \cite{villani}, the assertion holds on a Polish metric space $(X,d)$. In our setting $(\mathbb{P},d_\infty)$ is a non-separable metric space, so we cannot directly apply these results. 
However given a sequence of probability measures $\mu_k\in\mathcal{P}^1(\mathbb{P})$ we claim that the first part of the assertion still holds. Indeed, by Proposition~\ref{P:separableSupp} we have that $\supp(\mu_k)$ is separable and by $\tau$-additivity it has full measure $\mu_k(\supp(\mu_k))=1$, thus we may take a countable dense subset $D_k\subseteq\supp(\mu_k)$ for each $k\in\mathbb{N}$. Then the union $D:=\cup_{k\in\mathbb{N}}D_k$ is still countable and dense in $\overline{\cup_{k\in\mathbb{N}}\supp(\mu_k)}$. Consider the Polish metric space $(\overline{D},d_\infty)$. On this space, the restriction of $W_1$ and the weak-$*$ topology of $\mathcal{P}^1(\mathbb{P})$ with uniform integrability coincide by Theorem 6.9 and Corollary 6.13 \cite{villani}. Moreover $\supp(\mu_k)\subseteq \overline{D}$, thus the assertion on the equivalence of topologies follows for the sequence $\mu_k$.

Then by Varadarajan's theorem which can be found as Theorem 11.4.1. in \cite{dudley} we have that for any $\mu\in\mathcal{P}^1(\mathbb{P})$ the empirical probability measures $\mu_n:=\sum_{i=1}^n\frac{1}{n}\delta_{Y_i}$ converge weakly to $\mu$ almost surely on the Polish metric space $(\supp(\mu),d_\infty)$, where $Y_i$ is a sequence of i.i.d. random variables on the Polish metric space $(\supp(\mu),d_\infty)$ with law $\mu$. 
So for each bounded continuous function $f$ on $(\supp(\mu),d_\infty)$ we have $\int_{\supp(\mu)}fd\mu_n\to\int_{\supp(\mu)}fd\mu$ which happens outside of a set of measure $0$. 
So on the complement we have weak convergence of $\mu_n$ to $\mu$. Now, one is left with checking that $\mu_n$ is a uniformly integrable sequence which follows from the uniform integrability of $\mu$ itself.
\end{proof}

\begin{definition}[strong measurability, Bochner integral]\label{D:BochnerIntegrable}
Let $(\Omega,\Sigma,\mu)$ be finite measure space and let $f:\Omega\mapsto\mathbb{P}$. Then $f$ is \emph{strongly measurable} if there exists a sequence of simple functions $f_n$, such that $\lim_{n\to\infty}f_n(\omega)=f(\omega)$ in the operator norm almost everywhere.

The function $f:\Omega\mapsto\mathbb{P}$ is \emph{Bochner integrable} if the following are satisfied:
\begin{itemize}
\item[(1)] $f$ is strongly measurable;
\item[(2)] there exists a sequence of simple functions $f_n$, such that $\lim_{n\to\infty}\int_{\Omega}\|f(\omega)-f_n(\omega)\|d\mu(\omega)=0$
\end{itemize}
In this case we define the \emph{Bochner integral} of $f$ by
$$\int_{\Omega}f(\omega)d\mu(\omega):=\lim_{n\to\infty}\int_{\Omega}f_n(\omega)d\mu(\omega).$$
\end{definition}
It is well known that a strongly measurable function $f$ on a finite measure space $(\Omega,\Sigma,\mu)$ is Bochner integrable if and only if $\int_{\Omega}\|f(\omega)\|d\mu(\omega)<\infty$.

For $X,A\in\mathbb{P}$ we use the notation
\begin{equation*}
\log_XA:=X^{1/2}\log(X^{-1/2}AX^{-1/2})X^{1/2}.
\end{equation*}
Notice that also $\log_XA=X\log(X^{-1}A)$ and the exponential metric
increasing (EMI) property (\cite{LL07})
\begin{eqnarray}\label{eq:EMI}||\log X-\log Y||\leq d_{\infty}(X,Y), \ \ \
X,Y\in {\Bbb P}. \end{eqnarray}

\begin{lemma}\label{L:LogIntegrable}
For all $\mu\in\mathcal{P}^1(\mathbb{P})$ and $X\in\mathbb{P}$, the Bochner integral $\int_{\mathbb{P}}\log_{X}Ad\mu(A)$ exists.
\end{lemma}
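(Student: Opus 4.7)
The plan is to verify directly the two requirements of Definition~\ref{D:BochnerIntegrable}, with the understanding that the integrand $A\mapsto\log_XA$ takes values in the Banach space $(\mathbb{S},\|\cdot\|)$ (even though the definition as stated talks about $\mathbb{P}$-valued maps, the integration is Bochner integration in $\mathbb{S}$). By the standard criterion recalled right after that definition, it suffices to check (i) strong measurability of $A\mapsto\log_XA$ with respect to $\mu$, and (ii) $\int_{\mathbb{P}}\|\log_XA\|\,d\mu(A)<+\infty$.

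For step (ii), I would combine the EMI inequality \eqref{eq:EMI} with the congruence invariance \eqref{eq:invariance}. Applying \eqref{eq:EMI} to $X^{-1/2}AX^{-1/2}$ and $I$ and using $\log I=0$ gives
\begin{equation*}
\bigl\|\log(X^{-1/2}AX^{-1/2})\bigr\|\leq d_\infty\bigl(X^{-1/2}AX^{-1/2},I\bigr),
\end{equation*}
and \eqref{eq:invariance} with $G=X^{-1/2}$ rewrites the right-hand side as $d_\infty(A,X)$. The submultiplicativity of the operator norm then yields
\begin{equation*}
\|\log_XA\|\leq\|X^{1/2}\|\,\bigl\|\log(X^{-1/2}AX^{-1/2})\bigr\|\,\|X^{1/2}\|\leq\|X\|\,d_\infty(A,X),
\end{equation*}
and integrability follows since $\mu\in\mathcal{P}^1(\mathbb{P})$ by definition satisfies $\int_{\mathbb{P}}d_\infty(X,A)\,d\mu(A)<+\infty$.

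For step (i), the map $A\mapsto\log_XA$ is continuous from $(\mathbb{P},d_\infty)$ to $(\mathbb{S},\|\cdot\|)$: the congruence $A\mapsto X^{-1/2}AX^{-1/2}$ is norm-continuous, $\log$ is norm-continuous on $\mathbb{P}$, and the $d_\infty$-topology agrees with the relative operator norm topology by \cite{thompson}. The main technical point is that $\mathbb{P}$ itself need not be separable, so one cannot invoke Pettis measurability directly. Here I would use Proposition~\ref{P:separableSupp}: $\mathrm{supp}(\mu)$ is separable and, by $\tau$-additivity, has full $\mu$-measure. Hence the image of a countable dense subset of $\mathrm{supp}(\mu)$ under the continuous map $\log_X(\cdot)$ generates a separable subspace of $\mathbb{S}$ which contains $\log_XA$ for $\mu$-a.e.\ $A$. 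One then builds simple functions approximating $A\mapsto\log_XA$ in operator norm $\mu$-almost everywhere by the usual procedure of nested finite partitions of $\mathrm{supp}(\mu)$ using $d_\infty$-balls of radii $1/n$ around the chosen dense points, and pushing these forward through $\log_X$; continuity ensures the operator-norm convergence $\mu$-a.e.

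The step I expect to require the most care is strong measurability, precisely because the ambient space $\mathbb{P}$ is non-separable and Definition~\ref{D:BochnerIntegrable} is stated in terms of approximation by $\mathbb{P}$-valued simple functions, whereas our integrand is naturally $\mathbb{S}$-valued. The technical content however is routine once separability of the support is in hand: dominated convergence combined with the bound $\|\log_XA\|\leq\|X\|d_\infty(A,X)$ upgrades the almost everywhere approximation to the $L^1$-approximation required in condition (2) of Definition~\ref{D:BochnerIntegrable}, and the Bochner integral is then well-defined.
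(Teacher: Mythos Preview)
Your proposal is correct and follows essentially the same route as the paper: continuity of $A\mapsto\log_XA$ for strong measurability, and the bound $\|\log_XA\|\leq\|X\|\,d_\infty(X,A)$ for integrability. Two minor remarks: the EMI detour in step~(ii) is unnecessary since $\|\log(X^{-1/2}AX^{-1/2})\|=d_\infty(X,A)$ is literally the definition \eqref{eq:ThompsonMetric} of $d_\infty$; and your treatment of strong measurability via Proposition~\ref{P:separableSupp} is in fact more careful than the paper's, which simply asserts that norm continuity suffices without addressing the non-separability of $\mathbb{P}$.
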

\begin{proof}
First of all, notice that $A\mapsto X\log(X^{-1}A)$ is strongly measurable, since $A\mapsto X\log(X^{-1}A)$ is norm continuous, hence $d_\infty$ continuous. Then
\begin{equation*}
\begin{split}
\int_{\mathbb{P}}\|X\log(X^{-1}A)\|d\mu(A)&\leq \int_{\mathbb{P}}\|X^{1/2}\|\|\log(X^{-1/2}AX^{-1/2})\|\|X^{1/2}\|d\mu(A)\\
&=\|X\|\int_{\mathbb{P}}\|\log(X^{-1/2}AX^{-1/2})\|d\mu(A)\\
&=\|X\|\int_{\mathbb{P}}d_\infty(X,A)d\mu(A)<\infty
\end{split}
\end{equation*}
which shows Bochner integrability.
\end{proof}

\begin{definition}[Karcher equation/mean]\label{D:Karcher}
For a $\mu\in\mathcal{P}^1(\mathbb{P})$ the \emph{Karcher equation} is defined as
\begin{equation}\label{eq:D:Karcher}
\int_{\mathbb{P}}\log_XAd\mu(A)=0,
\end{equation}
where $X\in\mathbb{P}$. If \eqref{eq:D:Karcher} has a unique solution in $X\in\mathbb{P}$, then it is called the Karcher mean and is denoted by $\Lambda(\mu)$.
\end{definition}

\begin{definition}[Weighted geometric mean]\label{D:GeometricMean}
Let $A,B\in\mathbb{P}$ and $t\in[0,1]$. Then for $(1-t)\delta_A+t\delta_B=:\mu\in\mathcal{P}^1(\mathbb{P})$ the Karcher equation
\begin{equation*}
\int_{\mathbb{P}}\log_XAd\mu(A)=(1-t)\log_XA+t\log_XB=0
\end{equation*}
has a unique solution $A\#_tB=\Lambda(\mu)$ called the \emph{weighted geometric mean} and
\begin{equation*}
A\#_tB=A^{1/2}\left(A^{-1/2}BA^{-1/2}\right)^tA^{1/2}=A\left(A^{-1}B\right)^t.
\end{equation*}
\end{definition}

By the dominated convergence theorem and Lemma~\ref{L:LogIntegrable} we have the following:
\begin{lemma}\label{L:gradContinuous}
For each $X\in\mathbb{P}$ and $\mu\in\mathcal{P}^1(\mathbb{P})$ the function $X\mapsto \int_{\mathbb{P}}\log_XAd\mu(A)$ is $d_\infty$ to norm continuous.
\end{lemma}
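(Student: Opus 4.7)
The plan is to prove sequential continuity: take any sequence $X_n \to X$ in $d_\infty$, and show that $\int_{\mathbb{P}}\log_{X_n}A\,d\mu(A)$ converges to $\int_{\mathbb{P}}\log_{X}A\,d\mu(A)$ in operator norm by applying Lebesgue's dominated convergence theorem for Bochner integrals. Since $(\mathbb{P},d_\infty)$ is metric, sequential continuity yields continuity.

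First I would establish pointwise convergence of the integrands. Since the $d_\infty$ topology agrees with the relative operator norm topology on $\mathbb{P}$, $X_n\to X$ implies $X_n\to X$ and $X_n^{-1}\to X^{-1}$ in norm, and $X_n^{1/2}\to X^{1/2}$ in norm. Continuity of the principal logarithm on $\mathbb{P}$ in the norm topology then gives $\log(X_n^{-1/2}AX_n^{-1/2})\to\log(X^{-1/2}AX^{-1/2})$ in norm for each fixed $A\in\mathbb{P}$. Multiplying by the norm-convergent factors $X_n^{1/2}$ on both sides yields $\log_{X_n}A\to\log_XA$ in norm for each $A$.

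Next, I would produce a $\mu$-integrable dominating function. For $n$ large enough, $d_\infty(X_n,X)\leq 1$ and (by norm convergence) $\|X_n\|\leq\|X\|+1=:M$. The same estimate used in the proof of Lemma~\ref{L:LogIntegrable} gives
\begin{equation*}
\|\log_{X_n}A\|\leq\|X_n\|\cdot\|\log(X_n^{-1/2}AX_n^{-1/2})\|=\|X_n\|\,d_\infty(X_n,A)\leq M\bigl(1+d_\infty(X,A)\bigr),
\end{equation*}
using the triangle inequality $d_\infty(X_n,A)\leq d_\infty(X_n,X)+d_\infty(X,A)$. The right-hand side is a $\mu$-integrable scalar function of $A$ since $\mu\in\mathcal{P}^1(\mathbb{P})$.

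Finally, strong measurability of $A\mapsto\log_{X_n}A$ (as in Lemma~\ref{L:LogIntegrable}) together with the dominated convergence theorem for Bochner integrals gives the desired norm convergence of the integrals. There is no substantial obstacle here; the only mild care needed is the verification that norm and $d_\infty$ topologies coincide on $\mathbb{P}$ so that the pointwise limit of $\log_{X_n}A$ is indeed $\log_XA$, and that the triangle inequality for $d_\infty$ supplies a uniform integrable bound on a tail of the sequence.
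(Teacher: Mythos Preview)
Your proposal is correct and follows essentially the same approach as the paper: sequential continuity via dominated convergence, using pointwise norm convergence of $\log_{X_n}A\to\log_XA$ together with the integrable bound $\|\log_{X_n}A\|\leq\|X_n\|\,d_\infty(X_n,A)$. The paper applies scalar DCT to $A\mapsto\|\log_{X_n}A-\log_XA\|$ rather than Bochner DCT directly, and you are in fact slightly more careful in explicitly producing an $n$-independent dominating function via the triangle inequality $d_\infty(X_n,A)\leq 1+d_\infty(X,A)$.
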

\begin{proof}
Pick a sequence $X_n\to X$ in the $d_\infty$ topology in $\mathbb{P}$. Then
\begin{equation}\label{eq1:L:gradContinuous}
\begin{split}
&\left\|\int_{\mathbb{P}}\log_{X_n}Ad\mu(A)-\int_{\mathbb{P}}\log_{X}Ad\mu(A)\right\|\\
&\leq\int_{\mathbb{P}}\left\|\log_{X_n}A-\log_{X}A\right\|d\mu(A)\\
&\leq\int_{\mathbb{P}}\left\|\log_{X_n}A\right\|+\left\|\log_{X}A\right\|d\mu(A)\\
&\leq\|X_n\|\int_{\mathbb{P}}d_\infty(X_n,A)d\mu(A)+\|X\|\int_{\mathbb{P}}d_\infty(X,A)d\mu(A)<\infty,
\end{split}
\end{equation}
thus $\left\|\log_{X_n}A-\log_{X}A\right\|$ is integrable. Since $d_\infty$ agrees with the relative norm topology, we have that
\begin{equation*}
F_n(A):=\left\|\log_{X_n}A-\log_{X}A\right\|\to 0
\end{equation*}
point-wisely for every $A\in\mathbb{P}$ as $n\to\infty$.
Then by the dominated convergence theorem we obtain
\begin{equation*}
\begin{split}
\lim_{n\to\infty}\int_{\mathbb{P}}\left\|\log_{X_n}A-\log_{X}A\right\|d\mu(A)&=\int_{\mathbb{P}}\lim_{n\to\infty}\left\|\log_{X_n}A-\log_{X}A\right\|d\mu(A)\\
&=0.
\end{split}
\end{equation*}
In view of \eqref{eq1:L:gradContinuous} this proves the assertion.
\end{proof}

For some further known facts below, see for example \cite{lawsonlim1}.

\begin{theorem}[see Theorem 6.4. \cite{lawsonlim1}]\label{T:KarcherExist}
Let $A_i\in\mathbb{P}$ for $1\leq i\leq n$ and let $\omega=(w_1,\ldots,w_n)$ be a probability vector. Then for $\mu=\sum_{i=1}^nw_i\delta_{A_i}$ the equation \eqref{eq:D:Karcher} has a unique positive definite solution $\Lambda(\mu)$.

In the special case $n=2$, we have
\begin{equation}\label{eq:T:KarcherExist}
\Lambda((1-t)\delta_{A}+t\delta_{B})=A\#_tB
\end{equation}
for any $t\in[0,1]$, $A,B\in\mathbb{P}$.
\end{theorem}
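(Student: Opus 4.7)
The plan is to decouple existence from uniqueness, proving both via approximation by the \emph{weighted power means} --- fixed points of simpler contractive maps that are comparable in the Loewner order and converge monotonically to a solution of the Karcher equation. The $n=2$ case then follows from a direct verification that $A\#_tB$ solves the relevant two-point equation, combined with the general uniqueness.

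For existence, for each $s\in(0,1]$ I would introduce the map $T_s:\mathbb{P}\to\mathbb{P}$, $T_s(X):=\sum_{i=1}^nw_i(X\#_sA_i)$, where the sum is taken in the ambient Banach space $\mathbb{S}$. Each $X\mapsto X\#_sA_i$ is an $s$-contraction on $(\mathbb{P},d_\infty)$ (a standard Thompson-metric estimate for the weighted geometric mean), and the order sandwich \eqref{eq:order} implies that taking convex combinations is non-expansive in $d_\infty$, so $T_s$ is a strict $s$-contraction on the complete metric space $(\mathbb{P},d_\infty)$ with a unique fixed point $P_s\in\mathbb{P}$. A standard argument using operator monotonicity of $\#_s$ shows $\{P_s\}_{s\in(0,1]}$ is monotone decreasing in the Loewner order as $s\to 0^+$, trapped between the weighted harmonic and arithmetic means, and hence converges in operator norm to some $P_0\in\mathbb{P}$. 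Expanding $A\#_sB=A+sA\log(A^{-1}B)+O(s^2)$ and substituting into $P_s=T_s(P_s)$ yields
\begin{equation*}
\sum_{i=1}^n w_i\log_{P_s}A_i=O(s),
\end{equation*}
and passing $s\to 0^+$ using the continuity in \lemref{L:gradContinuous} shows that $P_0$ solves \eqref{eq:D:Karcher}.

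For uniqueness, suppose $X\in\mathbb{P}$ solves \eqref{eq:D:Karcher}. The idea is to bracket $X$ between $P_s$ from above and from below for all $s\in(0,1]$ and then let $s\to 0^+$. Using the operator concavity of $\log$ combined with the Ando--Hiai inequality, one derives from the Karcher equation that $X\leq T_s(X)$ in the Loewner order; since $T_s$ is $\leq$-monotone with unique attractive fixed point $P_s$, iterating forces $X\leq P_s$. Running the same argument on the Karcher equation rewritten under the inversion invariance \eqref{eq:invariance} yields $X\geq P_s$, and letting $s\to 0^+$ gives $X=P_0$. The main obstacle is precisely this Loewner-order comparison $X\leq T_s(X)$: without the CAT(0) machinery it has to be deduced purely from operator inequalities, and this is where the genuine content of the Lawson--Lim proof lies.

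For $n=2$ and $\mu=(1-t)\delta_A+t\delta_B$, set $C:=A^{-1}B$ and $X:=A\#_tB=AC^t$. Then $X^{-1}A=C^{-t}$ and $X^{-1}B=C^{1-t}$, so
\begin{equation*}
(1-t)\log_XA+t\log_XB=X\bigl((1-t)\log(C^{-t})+t\log(C^{1-t})\bigr)=\bigl(-t(1-t)+t(1-t)\bigr)X\log C=0.
\end{equation*}
Hence $A\#_tB$ solves the Karcher equation for the two-point measure, and by the uniqueness established above it coincides with $\Lambda((1-t)\delta_A+t\delta_B)$.
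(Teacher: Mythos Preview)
The paper does not supply its own proof of this theorem: it is quoted verbatim as a known result from Lawson--Lim \cite{lawsonlim1} (and ultimately from \cite{limpalfia}), and the $n=2$ identity is already recorded separately as Definition~\ref{D:GeometricMean}. So there is no ``paper's proof'' to compare against beyond the cited power-means argument.

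Your outline is precisely that argument, and as a plan it is sound. A few small corrections. First, $X\mapsto X\#_sA_i$ is a $(1-s)$-contraction in $d_\infty$, not an $s$-contraction; with $s\in(0,1]$ this is what you need for the Banach fixed point. Second, the key inequality $X\leq T_s(X)$ for a Karcher solution $X$ does not require Ando--Hiai: writing $B_i:=X^{-1/2}A_iX^{-1/2}$, the Karcher equation reads $\sum_i w_i\log B_i=0$, and the scalar inequality $1+u\leq e^u$ applied spectrally gives $I+s\log B_i\leq B_i^s$, hence $\sum_i w_iB_i^s\geq I$, i.e.\ $T_s(X)\geq X$. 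Third, the lower bound via inversion really produces $X\geq P_{-s}$ (the power mean with negative exponent, defined by $P_{-s}(A):=P_s(A^{-1})^{-1}$), and you then need that $P_{-s}\to P_0$ as well; this is part of the Lim--P\'alfia monotonicity package, so just say so explicitly. Your $n=2$ verification is correct; note only that $C=A^{-1}B$ is not self-adjoint, so $\log C$ and $C^t$ are to be read through the similarity $C=A^{-1/2}(A^{-1/2}BA^{-1/2})A^{1/2}$, exactly as the paper does when it writes $\log_XA=X\log(X^{-1}A)$.
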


\begin{proposition}[see Proposition 2.5. \cite{lawsonlim1}]\label{P:KarcherW1contracts}
Let $A_i,B_i\in\mathbb{P}$ for $1\leq i\leq n$. Then $\Lambda$ for $\mu=\frac{1}{n}\sum_{i=1}^n\delta_{A_i}$ and $\nu=\frac{1}{n}\sum_{i=1}^n\delta_{B_i}$ satisfies
\begin{equation}\label{eq0:P:KarcherW1contracts}
d_{\infty}(\Lambda(\mu),\Lambda(\nu))\leq\sum_{i=1}^n\frac{1}{n}d_\infty(A_i,B_i),
\end{equation}
in particular by permutation invariance of $\Lambda$ in the variables $(A_1,\ldots,A_n)$ we have
\begin{equation}\label{eq:P:KarcherW1contracts}
d_{\infty}(\Lambda(\mu),\Lambda(\nu))\leq\min_{\sigma\in S_n}\sum_{i=1}^n\frac{1}{n}d_\infty(A_i,B_{\sigma(i)})=W_1(\mu,\nu).
\end{equation}
\end{proposition}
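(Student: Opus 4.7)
The plan is to bypass the standard fixed-point/power-mean contraction analysis---which in fact only yields the weaker estimate $d_\infty(\Lambda(\mu),\Lambda(\nu))\leq\max_i d_\infty(A_i,B_i)$---and instead combine two purely algebraic properties of $\Lambda$, namely monotonicity in the Loewner order and a diagonal scalar-scaling identity, with the Loewner-order characterization \eqref{eq:order} of the Thompson metric.

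First I would invoke monotonicity of $\Lambda$: if $A_i\leq A_i'$ in the Loewner order for every $i$, then $\Lambda(\tfrac{1}{n}\sum_i\delta_{A_i})\leq\Lambda(\tfrac{1}{n}\sum_i\delta_{A_i'})$. This is established in \cite{lawsonlim1} via the monotone power mean family $P_t\to\Lambda$ in $d_\infty$. Next I would establish the scaling identity
\begin{equation*}
\Lambda\Bigl(\tfrac{1}{n}\sum_{i=1}^n\delta_{c_iA_i}\Bigr)=\Bigl(\prod_{i=1}^n c_i\Bigr)^{1/n}\,\Lambda\Bigl(\tfrac{1}{n}\sum_{i=1}^n\delta_{A_i}\Bigr),\qquad c_i>0,
\end{equation*}
as a direct consequence of the Karcher equation. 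Writing $c:=(\prod_j c_j)^{1/n}$ and using $\log(c_iA_i)=\log c_i\cdot I+\log A_i$, together with the expansion $\log_{c\Lambda(\mu)}A_i=c\log_{\Lambda(\mu)}A_i-c(\log c)\Lambda(\mu)$ derivable from $\log_XA=X^{1/2}\log(X^{-1/2}AX^{-1/2})X^{1/2}$, the equation $\tfrac{1}{n}\sum\log_X(c_iA_i)=0$ at $X:=c\Lambda(\mu)$ reduces to an identity via the Karcher equation for $\Lambda(\mu)$, confirming that $X$ is the unique solution for the scaled measure.

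To conclude, I would set $r_i:=d_\infty(A_i,B_i)$ so that $e^{-r_i}A_i\leq B_i\leq e^{r_i}A_i$ by \eqref{eq:order}. Monotonicity then yields $\Lambda(\nu)\leq\Lambda(\tfrac{1}{n}\sum_i\delta_{e^{r_i}A_i})$, and the scaling identity evaluates the right-hand side as $e^{\bar r}\Lambda(\mu)$ with $\bar r:=\tfrac{1}{n}\sum_i r_i$. Symmetrically $\Lambda(\nu)\geq e^{-\bar r}\Lambda(\mu)$; applying \eqref{eq:order} once more to the pair $(\Lambda(\mu),\Lambda(\nu))$ gives $d_\infty(\Lambda(\mu),\Lambda(\nu))\leq\bar r$, which is \eqref{eq0:P:KarcherW1contracts}. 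Permutation invariance of $\Lambda$ for equally weighted measures is immediate from the symmetry of the Karcher equation, so \eqref{eq0:P:KarcherW1contracts} applies equally with $B_{\sigma(i)}$ in place of $B_i$ for any $\sigma\in S_n$; minimizing over $\sigma$ gives \eqref{eq:P:KarcherW1contracts}.

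The only substantive hard part is off-loaded onto the monotonicity of $\Lambda$, which in this Banach-Finsler setting is itself the nontrivial theorem of \cite{lawsonlim1} and requires the norm-continuous power mean approximation, the usual variational/Riemannian arguments being unavailable since $d_\infty^2$ is not uniformly convex. Granted monotonicity, the scaling identity is a short Karcher-equation computation and the concluding step is a direct order-theoretic manipulation via \eqref{eq:order}, which explains why this algebraic route succeeds where naive fixed-point estimates applied to $P_t$ uniformly in $t$ do not produce the sharp average bound $\bar r$.
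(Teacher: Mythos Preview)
The paper supplies no proof of its own for this proposition: it is quoted verbatim from \cite{lawsonlim1} (Proposition 2.5 there). Your argument is correct and is, in fact, exactly the route taken in \cite{lawsonlim1}: Loewner monotonicity of $\Lambda$ combined with the diagonal scalar-homogeneity identity
\[
\Lambda\Bigl(\tfrac{1}{n}\sum_i\delta_{c_iA_i}\Bigr)=\Bigl(\prod_i c_i\Bigr)^{1/n}\Lambda\Bigl(\tfrac{1}{n}\sum_i\delta_{A_i}\Bigr)
\]
and the order characterization of $d_\infty$ in \eqref{eq:order}. Your verification of the scaling identity via the Karcher equation is right (one line: $\log_{cX}(c_iA_i)=c\log(c_i/c)\,X+c\log_XA_i$, and the scalar terms average to zero precisely when $c=(\prod_ic_i)^{1/n}$). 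Your diagnosis that the na\"ive fixed-point/contraction estimate on the power-mean map only yields $\max_i d_\infty(A_i,B_i)$, and that the sharp arithmetic average requires the order-theoretic detour, is also accurate.

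Two minor remarks. First, there is no circularity in invoking monotonicity of $\Lambda$ here: in \cite{lawsonlim1} monotonicity and the present contraction are both proved for the power means $P_t$ independently and then passed to the limit $t\to 0^+$. Second, you assert but do not argue the equality $\min_{\sigma\in S_n}\tfrac{1}{n}\sum_i d_\infty(A_i,B_{\sigma(i)})=W_1(\mu,\nu)$. This is Birkhoff--von~Neumann: any coupling of the two uniform $n$-point empirical measures is (after rescaling) a doubly stochastic matrix, the cost functional is linear, so the infimum is attained at a permutation matrix. It would be worth one sentence to say so.
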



\section{Extension of $\Lambda$ to $\mathcal{P}^1(\mathbb{P})$}
We extend $\Lambda$ and its contraction properties by using continuity and contraction property of it with respect to $W_1$, along with the approximation properties of $\mathcal{P}^1(\mathbb{P})$ with respect to the metric $W_1$. The same technique was adopted in \cite{limpalfia3}.

\begin{lemma}\label{L:converge}
Let $x,y\in\mathbb{P}$ and $\mu_n,\mu\in\mathcal{P}^1(\mathbb{P})$ and that $\supp(\mu_n),\supp(\mu)\subseteq Z\subset\mathbb{P}$ where $Z$ is closed and separable. Assume also that $x\to y$ in $d_\infty$, $\mu_n\to \mu$ in $W_1$. Then
\begin{equation*}
\int_{\mathbb{P}}\log_{x}Ad\mu_n(A)\to \int_{\mathbb{P}}\log_{y}Ad\mu(A)
\end{equation*}
in the norm topology.
\end{lemma}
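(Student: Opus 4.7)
My plan is a triangle-inequality split that separates the $\mu$-dependence from the $x$-dependence into two summands, each handled by a result already available. Reading the statement with $x=x_n$ varying (clearly the intended interpretation, given that $\mu_n$ varies) I decompose
\begin{equation*}
\int_{\mathbb{P}}\log_{x_n}A\,d\mu_n(A)-\int_{\mathbb{P}}\log_{y}A\,d\mu(A)=\underbrace{\int_{\mathbb{P}}\log_{x_n}A\,d(\mu_n-\mu)(A)}_{I_n}+\underbrace{\int_{\mathbb{P}}(\log_{x_n}A-\log_{y}A)\,d\mu(A)}_{II_n}.
\end{equation*}
The summand $II_n$ tends to zero in norm directly by \lemref{L:gradContinuous} applied to the fixed measure $\mu$, since $x_n\to y$ in $d_\infty$.

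For $I_n$ the plan is to exhibit a Lipschitz estimate for $A\mapsto\log_x A$ and then invoke a Kantorovich--Rubinstein-style bound via an optimal $W_1$-coupling. The Lipschitz estimate is immediate from combining EMI \eqref{eq:EMI} with the congruence-invariance \eqref{eq:invariance} of $d_\infty$:
\begin{equation*}
\|\log_x A-\log_x B\|\leq\|x^{1/2}\|^2\bigl\|\log(x^{-1/2}Ax^{-1/2})-\log(x^{-1/2}Bx^{-1/2})\bigr\|\leq\|x\|\,d_\infty(A,B)
\end{equation*}
for all $x,A,B\in\mathbb{P}$. Because $\supp(\mu_n),\supp(\mu)\subseteq Z$ with $Z$ closed and separable, $(Z,d_\infty)$ is a Polish metric space, so an optimal coupling $\pi_n\in\Pi(\mu_n,\mu)$ realizing $W_1(\mu_n,\mu)$ exists, and the Bochner-valued transport estimate gives
\begin{equation*}
\|I_n\|\leq\int_{Z\times Z}\|\log_{x_n}A-\log_{x_n}B\|\,d\pi_n(A,B)\leq\|x_n\|\,W_1(\mu_n,\mu).
\end{equation*}
Since $x_n\to y$ in $d_\infty$ implies $\|x_n\|\to\|y\|$, the scalars $\|x_n\|$ are bounded, and $W_1(\mu_n,\mu)\to 0$ forces $\|I_n\|\to 0$.

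The only substantive content of the plan is the Lipschitz bound for $\log_x$, which is a one-line consequence of properties that the excerpt has already recorded; I do not anticipate any serious obstacle once the decomposition above is in place. The separability assumption on $Z$ is used precisely to guarantee the existence of the optimal coupling $\pi_n$ underlying the Kantorovich estimate for $I_n$.
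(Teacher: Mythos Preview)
Your argument is correct and uses the same two-term split as the paper: handle $II_n$ by \lemref{L:gradContinuous}, and handle $I_n$ by reducing to $W_1(\mu_n,\mu)\to 0$. The only difference is in the execution of the $I_n$ step: the paper bounds it by $\|x\|\int_{\mathbb{P}}d_\infty(x,A)\,d(\mu_n-\mu)(A)$ and appeals to Proposition~\ref{P:weakW1agree} (convergence of first moments under $W_1$-convergence), whereas you prove the Lipschitz estimate $\|\log_xA-\log_xB\|\le\|x\|\,d_\infty(A,B)$ via EMI and invariance and then pair it with an optimal coupling to get the sharper bound $\|I_n\|\le\|x_n\|\,W_1(\mu_n,\mu)$. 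Your version is arguably cleaner---the paper's inequality, read literally against the signed measure $\mu_n-\mu$, needs a word of justification---and it makes explicit why the separability hypothesis on $Z$ is there (existence of the optimal $\pi_n$); note, though, that a near-optimal coupling would already suffice, so the argument does not hinge on attainment.
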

\begin{proof}
Let $x,y\in\mathbb{P}$ and $\mu,\nu\in\mathcal{P}^1(\mathbb{P})$. Then we have
\begin{equation}\label{eq1:L:converge}
\begin{split}
&\left\|\int_{\mathbb{P}}\log_{x}Ad\mu_n(A)-\int_{\mathbb{P}}\log_{y}Ad\mu(A) \right\| \\
&\leq \left\|\int_{\mathbb{P}}\log_{x}Ad\mu(A)-\int_{\mathbb{P}}\log_{y}Ad\mu(A) \right\|\\
&\quad +\left\|\int_{\mathbb{P}}\log_{x}Ad\mu_n(A)-\int_{\mathbb{P}}\log_{x}Ad\mu(A) \right\|\\
&\leq \left\|\int_{\mathbb{P}}\log_{x}Ad\mu(A)-\int_{\mathbb{P}}\log_{y}Ad\mu(A) \right\|\\
&\quad +\|x\|\int_{\mathbb{P}}\|\log(x^{-1/2}Ax^{-1/2})\|d(\mu_n-\mu)(A)\\
&\leq \left\|\int_{\mathbb{P}}\log_{x}Ad\mu(A)-\int_{\mathbb{P}}\log_{y}Ad\mu(A) \right\|\\
&\quad +\|x\|\int_{\mathbb{P}}d_\infty(x,A)d(\mu_n-\mu)(A).
\end{split}
\end{equation}
If $x\to y$ in $d_\infty$, then the first term in the above converges to $0$ by Lemma~\ref{L:gradContinuous}. The second term goes to $0$ by Proposition~\ref{P:weakW1agree}.
\end{proof}

\begin{theorem}\label{T:LambdaExists}
For all $\mu\in\mathcal{P}^1(\mathbb{P})$ there exists a solution of \eqref{eq:D:Karcher} denoted by $\Lambda(\mu)$ (with an obvious abuse of notation), which satisfies
\begin{equation}\label{eq:T:LambdaExists}
d_\infty(\Lambda(\mu),\Lambda(\nu))\leq W_1(\mu,\nu)
\end{equation}
for all $\nu\in\mathcal{P}^1(\mathbb{P})$.
\end{theorem}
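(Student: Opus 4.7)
The plan is to define $\Lambda(\mu)$ by $W_1$-approximating $\mu$ with equal-weight empirical measures and using the contraction inequality \eqref{eq:P:KarcherW1contracts} to produce a $d_\infty$-Cauchy sequence of their Karcher means in the complete metric space $(\mathbb{P},d_\infty)$. First, by the Varadarajan-type construction inside the proof of Proposition~\ref{P:weakW1agree}, I pick a sequence of empirical measures $\mu_n:=\frac{1}{n}\sum_{i=1}^n\delta_{Y_i^{(n)}}$ with $Y_i^{(n)}\in\supp(\mu)$ such that $\mu_n\to\mu$ in $W_1$. Theorem~\ref{T:KarcherExist} gives $\Lambda(\mu_n)\in\mathbb{P}$ for every $n$. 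To compare means for $m\neq n$, I pass to the common refinement of cardinality $mn$ by repeating each atom of $\mu_m$ with multiplicity $n$ and each atom of $\mu_n$ with multiplicity $m$; this does not change the measures but rewrites them as uniformly $\frac{1}{mn}$-weighted empirical measures, so Proposition~\ref{P:KarcherW1contracts} yields $d_\infty(\Lambda(\mu_m),\Lambda(\mu_n))\leq W_1(\mu_m,\mu_n)$. Completeness of $(\mathbb{P},d_\infty)$ then supplies a $d_\infty$-limit $X^*\in\mathbb{P}$.

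The main obstacle is the passage to the limit in the Karcher equation. Since $\supp(\mu)$ is separable by Proposition~\ref{P:separableSupp}, the closed set $Z:=\overline{\supp(\mu)}\subseteq\mathbb{P}$ is separable and contains $\supp(\mu_n)\cup\supp(\mu)$ for every $n$. Having $\Lambda(\mu_n)\to X^*$ in $d_\infty$ and $\mu_n\to\mu$ in $W_1$, Lemma~\ref{L:converge} yields the norm-convergence
\begin{equation*}
0=\int_{\mathbb{P}}\log_{\Lambda(\mu_n)}A\,d\mu_n(A)\longrightarrow\int_{\mathbb{P}}\log_{X^*}A\,d\mu(A),
\end{equation*}
so $X^*$ solves \eqref{eq:D:Karcher}, and I declare $\Lambda(\mu):=X^*$. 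The particular choice of $\{\mu_n\}$ is irrelevant, because any other $W_1$-approximating empirical sequence $\{\mu_n'\}$ satisfies $d_\infty(\Lambda(\mu_n),\Lambda(\mu_n'))\leq W_1(\mu_n,\mu_n')\to 0$, so both produce the same limit.

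Finally, to obtain the Wasserstein contraction \eqref{eq:T:LambdaExists} for arbitrary $\mu,\nu\in\mathcal{P}^1(\mathbb{P})$, I take equal-cardinality empirical approximations $\mu_n\to\mu$ and $\nu_n\to\nu$ in $W_1$; Proposition~\ref{P:KarcherW1contracts} gives $d_\infty(\Lambda(\mu_n),\Lambda(\nu_n))\leq W_1(\mu_n,\nu_n)$, and letting $n\to\infty$ the left-hand side tends to $d_\infty(\Lambda(\mu),\Lambda(\nu))$ by Step~1, while the right-hand side tends to $W_1(\mu,\nu)$ by the triangle inequality for $W_1$.
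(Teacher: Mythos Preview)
Your proof is correct and follows essentially the same route as the paper's: approximate $\mu$ in $W_1$ by finitely supported measures, use Proposition~\ref{P:KarcherW1contracts} to show the Karcher means form a $d_\infty$-Cauchy sequence, define $\Lambda(\mu)$ as the limit, and invoke Lemma~\ref{L:converge} to pass to the limit in the Karcher equation. Your extra care in restricting to equal-weight empirical measures (so that Proposition~\ref{P:KarcherW1contracts} applies verbatim via the common-refinement trick) and in checking the separability hypothesis of Lemma~\ref{L:converge} simply makes explicit what the paper leaves implicit.
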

\begin{proof}
Let $\mu\in\mathcal{P}^1(\mathbb{P})$. Then by Proposition~\ref{P:weakW1agree} there exists a $W_1$-convergent sequence of finitely supported probability measures $\mu_n\in\mathcal{P}^1(\mathbb{P})$ such that $W_1(\mu,\mu_n)\to 0$. By Theorem~\ref{T:KarcherExist} $\Lambda(\mu_n)$ exists for any $n$ in the index set. We also have that $W_1(\mu_m,\mu_n)\to 0$ as $m,n\to\infty$ and by \eqref{eq:P:KarcherW1contracts} it follows that $d_\infty(\Lambda(\mu_m),\Lambda(\mu_n))\to 0$ as $m,n\to\infty$, i.e. $\Lambda(\mu_n)$ is a $d_\infty$ Cauchy sequence. Thus we define
\begin{equation*}
\tilde{\Lambda}(\mu):=\lim_{n\to\infty}\Lambda(\mu_n).
\end{equation*}
Since \eqref{eq:T:LambdaExists} holds by Proposition~\ref{P:KarcherW1contracts} for finitely supported probability measures, we extend \eqref{eq:T:LambdaExists} to the whole of $\mathcal{P}^1(\mathbb{P})$ by $W_1$-continuity, using the $W_1$-density of finitely supported probability measures in $\mathcal{P}^1(\mathbb{P})$.

Then by construction for all $n$ we have 
\begin{equation*}
\int_{\mathbb{P}}\log_{\Lambda(\mu_n)}Ad\mu_n(A)=0,
\end{equation*}
thus by Lemma~\ref{L:converge} we have 
\begin{equation*}
\int_{\mathbb{P}}\log_{\Lambda(\mu_n)}Ad\mu_n(A)\to \int_{\mathbb{P}}\log_{\tilde{\Lambda}(\mu)}Ad\mu(A),
\end{equation*}
that is
\begin{equation*}
\int_{\mathbb{P}}\log_{\tilde{\Lambda}(\mu)}Ad\mu(A)=0.
\end{equation*}
\end{proof}

\begin{definition}[Karcher mean]\label{D:KarcherMean}
Given $\mu\in\mathcal{P}^1(\mathbb{P})$ with unbounded support we define $\Lambda(\mu)$ as the limit obtained in Theorem~\ref{T:LambdaExists}.
\end{definition}

\section{Evolution systems related to $\Lambda$}
The fundamental $W_1$-contraction property \eqref{eq:T:LambdaExists} enables us to develop an ODE flow theory for $\Lambda$ that resembles the gradient flow theory for its potential function in the finite dimensional $\mathrm{CAT}(0)$-space case, see \cite{limpalfia2,ohta} and the monograph \cite{bacak}. Given a $\mathrm{CAT}(\kappa)$-space $(X,d)$, the \emph{Moreau-Yosida} resolvent of a lower semi-continuous function $f$ is defined as
\begin{equation*}
J_{\lambda}(x)=\argmin_{y\in X}f(y)+\frac{1}{2\lambda}d^2(x,y)
\end{equation*}
for $\lambda>0$. Then the gradient flow semigroup of $f$ is defined as
\begin{equation*}
S(t)x_0=\lim_{n\to\infty}(J_{t/n})^nx_0
\end{equation*}
for $t\in[0,\infty)$ and starting point $x_0\in X$, see \cite{bacak}. However in the infinite dimensional case substituting $d_\infty$ in place of $d$ in the above formulas leads to many difficulties. Furthermore the potential function $f$ is not known to exist in the infinite dimensional case of $\mathbb{P}$. However if we use the formulation of the critical point gradient equation equivalent to the definition of $J_\lambda$ above, we can obtain a reasonable ODE theory in our setting for $\Lambda$.

\begin{definition}[Resolvent operator]
Given $\mu\in\mathcal{P}^1(\mathbb{P})$ we define the resolvent operator for $\lambda>0$ and $X\in\mathbb{P}$ as
\begin{equation}\label{eq:D:resolvent}
J_{\lambda}^{\mu}(X):=\Lambda\left(\frac{\lambda}{\lambda+1}\mu+\frac{1}{\lambda+1}\delta_X\right),
\end{equation}
a solution we obtained in Theorem~\ref{T:LambdaExists} of the Karcher equation
\begin{equation*}
\frac{\lambda}{\lambda+1}\int_{\mathbb{P}}\log_{Z}Ad\mu(A)+\frac{1}{\lambda+1}\log_{Z}(X)=0
\end{equation*}
for $Z\in\mathbb{P}$ according to Definition~\ref{D:KarcherMean}.
\end{definition}

We readily obtain the following fundamental contraction property of the resolvent.

\begin{proposition}[Resolvent contraction]\label{P:ResolventContraction}
Given $\mu\in\mathcal{P}^1(\mathbb{P})$, for $\lambda>0$ and $X,Y\in\mathbb{P}$ we have
\begin{equation}\label{eq:P:ResolventContraction}
d_\infty(J_{\lambda}^\mu(X),J_{\lambda}^\mu(Y))\leq \frac{1}{1+\lambda}d_\infty(X,Y).
\end{equation}
\end{proposition}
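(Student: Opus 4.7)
The proof will follow directly from the $W_1$-contraction property of $\Lambda$ established in Theorem~\ref{T:LambdaExists}, combined with an explicit upper bound on $W_1$ between the two probability measures that enter the definition of $J_\lambda^\mu$. The plan is to show that the measures
\begin{equation*}
\mu_X:=\frac{\lambda}{\lambda+1}\mu+\frac{1}{\lambda+1}\delta_X,\qquad \mu_Y:=\frac{\lambda}{\lambda+1}\mu+\frac{1}{\lambda+1}\delta_Y
\end{equation*}
differ in the $W_1$ metric by at most $\frac{1}{\lambda+1}d_\infty(X,Y)$, and then invoke Theorem~\ref{T:LambdaExists}.

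The key step is to exhibit an explicit $\tau$-additive coupling $\gamma\in\Pi(\mu_X,\mu_Y)$. I would take
\begin{equation*}
\gamma:=\frac{\lambda}{\lambda+1}(\mathrm{id}\times\mathrm{id})_{\#}\mu+\frac{1}{\lambda+1}\delta_{(X,Y)},
\end{equation*}
i.e.\ the diagonal coupling of $\mu$ with itself, weighted by $\lambda/(\lambda+1)$, plus a point mass at $(X,Y)$ of weight $1/(\lambda+1)$. A direct check of marginals shows $\gamma\in\Pi(\mu_X,\mu_Y)$, and then
\begin{equation*}
W_1(\mu_X,\mu_Y)\leq \int_{\mathbb{P}\times\mathbb{P}}d_\infty(A,B)d\gamma(A,B)=\frac{\lambda}{\lambda+1}\int_{\mathbb{P}}d_\infty(A,A)d\mu(A)+\frac{1}{\lambda+1}d_\infty(X,Y)=\frac{1}{\lambda+1}d_\infty(X,Y).
\end{equation*}

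Finally, applying Theorem~\ref{T:LambdaExists} to the pair $\mu_X,\mu_Y\in\mathcal{P}^1(\mathbb{P})$ gives
\begin{equation*}
d_\infty\bigl(J_\lambda^\mu(X),J_\lambda^\mu(Y)\bigr)=d_\infty\bigl(\Lambda(\mu_X),\Lambda(\mu_Y)\bigr)\leq W_1(\mu_X,\mu_Y)\leq \frac{1}{\lambda+1}d_\infty(X,Y),
\end{equation*}
which is the desired inequality. I do not expect any substantive obstacle: the $\tau$-additivity of the coupling $\gamma$ is automatic from that of $\mu$ and of the Dirac mass $\delta_{(X,Y)}$, membership in $\mathcal{P}^1(\mathbb{P})$ of $\mu_X$ and $\mu_Y$ is immediate from $\mu\in\mathcal{P}^1(\mathbb{P})$, and the only nontrivial ingredient, namely the $W_1$-contraction of $\Lambda$, has already been established.
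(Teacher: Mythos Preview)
Your proof is correct but takes a more direct route than the paper's. The paper proves the inequality by approximation: it picks finitely supported $\mu_i\to\mu$ in $W_1$, applies the finite-support contraction inequality \eqref{eq0:P:KarcherW1contracts} from Proposition~\ref{P:KarcherW1contracts} to bound the middle term $d_\infty(J_\lambda^{\mu_i}(X),J_\lambda^{\mu_i}(Y))$ by $\frac{1}{1+\lambda}d_\infty(X,Y)$, and uses the triangle inequality together with $W_1$-continuity of $\Lambda$ to make the flanking terms $d_\infty(J_\lambda^{\mu}(\cdot),J_\lambda^{\mu_i}(\cdot))$ vanish in the limit. Your argument instead applies the general $W_1$-contraction of Theorem~\ref{T:LambdaExists} directly and bounds $W_1(\mu_X,\mu_Y)$ via an explicit diagonal-plus-point-mass coupling; this is precisely the special case of the convexity inequality that the paper records later as Proposition~\ref{P:W_1convex}, and in fact the paper itself uses exactly your argument in the proof of Lemma~\ref{L:empiricalIteration}. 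Your approach is cleaner and avoids the approximation step; the paper's version has the minor advantage of reducing directly to the previously established discrete inequality without needing to exhibit a coupling.
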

\begin{proof}
Let $\mu_i\in\mathcal{P}^1(\mathbb{P})$ be a sequence of finitely supported probability measures $W_1$-converging to $\mu$ by Proposition~\ref{P:weakW1agree}. Then by the triangle inequality and Proposition~\ref{P:KarcherW1contracts} we get
\begin{equation*}
\begin{split}
&d_\infty(J_{\lambda}^\mu(X),J_{\lambda}^\mu(Y))\\
&\leq d_\infty(J_{\lambda}^\mu(X),J_{\lambda}^{\mu_i}(X))+d_\infty(J_{\lambda}^{\mu_i}(X),J_{\lambda}^{\mu_i}(Y))+d_\infty(J_{\lambda}^{\mu_i}(Y),J_{\lambda}^\mu(Y))\\
&\leq d_\infty(J_{\lambda}^\mu(X),J_{\lambda}^{\mu_i}(X))+\frac{1}{1+\lambda}d_\infty(X,Y)+d_\infty(J_{\lambda}^{\mu_i}(Y),J_{\lambda}^\mu(Y)).
\end{split}
\end{equation*}
Since $d_\infty(J_{\lambda}^\mu(Z),J_{\lambda}^{\mu_i}(Z))\to 0$ as $i\to\infty$ by \eqref{eq:T:LambdaExists}, taking the limit $i\to\infty$ in the above chain of inequalities yields the assertion.
\end{proof}

\begin{proposition}[Resolvent identity]\label{P:ResolventIdentity}
Given $\mu\in\mathcal{P}^1(\mathbb{P})$, for $\tau>\lambda>0$ and $X\in\mathbb{P}$ we have
\begin{equation}\label{eq:P:ResolventIdentity}
J_{\tau}^\mu(X)=J_{\lambda}^\mu\left(J_{\tau}^\mu(X)\#_{\frac{\lambda}{\tau}}X\right).
\end{equation}
\end{proposition}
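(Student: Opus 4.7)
The plan is to exploit the geodesic identity $\log_Z(Z\#_t X)=t\log_Z(X)$ for $t\in[0,1]$, which is immediate from the closed form $Z\#_tX=Z^{1/2}(Z^{-1/2}XZ^{-1/2})^tZ^{1/2}$ appearing in Definition~\ref{D:GeometricMean}. Set $Z:=J_\tau^\mu(X)$. By the defining Karcher equation of the resolvent,
\begin{equation*}
\tau\int_{\mathbb{P}}\log_ZA\,d\mu(A)+\log_Z(X)=0.
\end{equation*}
Put $W:=Z\#_{\lambda/\tau}X$, which is well-defined since $\lambda/\tau\in(0,1)$, and note that $\log_Z(W)=\frac{\lambda}{\tau}\log_Z(X)$ by the geodesic identity. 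Multiplying the displayed equation by $\lambda/\tau$ and substituting gives
\begin{equation*}
\lambda\int_{\mathbb{P}}\log_ZA\,d\mu(A)+\log_Z(W)=0,
\end{equation*}
that is, $Z$ satisfies the Karcher equation for the probability measure $\frac{\lambda}{\lambda+1}\mu+\frac{1}{\lambda+1}\delta_W$.

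For finitely supported $\mu$ the Karcher equation has a unique solution by Theorem~\ref{T:KarcherExist}, so the computation above immediately identifies $Z=J_\lambda^\mu(W)$ in that case. For a general $\mu\in\mathcal{P}^1(\mathbb{P})$ I would approximate by a $W_1$-convergent sequence of finitely supported $\mu_i\to\mu$ provided by Proposition~\ref{P:weakW1agree}, and set $Z_i:=J_\tau^{\mu_i}(X)$ and $W_i:=Z_i\#_{\lambda/\tau}X$. The finitely supported case yields $Z_i=J_\lambda^{\mu_i}(W_i)$. By Theorem~\ref{T:LambdaExists} we have $Z_i\to Z$ in $d_\infty$, and by the $d_\infty$-continuity of the weighted geometric mean in its first argument (which follows from the closed form and the fact that $d_\infty$ coincides with the relative norm topology) $W_i\to W$. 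Combining Proposition~\ref{P:ResolventContraction} for the $X$-variable with the elementary bound
\begin{equation*}
W_1\left(\tfrac{\lambda}{\lambda+1}\mu_i+\tfrac{1}{\lambda+1}\delta_{W_i},\,\tfrac{\lambda}{\lambda+1}\mu+\tfrac{1}{\lambda+1}\delta_{W}\right)\leq\tfrac{\lambda}{\lambda+1}W_1(\mu_i,\mu)+\tfrac{1}{\lambda+1}d_\infty(W_i,W)
\end{equation*}
together with the $W_1$-contraction of $\Lambda$ from Theorem~\ref{T:LambdaExists}, we deduce $J_\lambda^{\mu_i}(W_i)\to J_\lambda^\mu(W)$, and in the limit $Z=J_\lambda^\mu(W)$.

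The main subtlety is that uniqueness of solutions of the Karcher equation is not asserted in this excerpt for general $\mu\in\mathcal{P}^1(\mathbb{P})$, so one cannot conclude $Z=J_\lambda^\mu(W)$ directly from the fact that $Z$ solves the relevant Karcher equation; the finite-support approximation circumvents this by transferring the identity through continuity in $W_1$ and $d_\infty$. Everything else reduces to the algebraic identity $\log_Z(Z\#_t X)=t\log_Z(X)$ and to contraction properties already established.
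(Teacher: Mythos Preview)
Your proof is correct and follows essentially the same approach as the paper: derive the identity for finitely supported $\mu$ from the geodesic relation $\log_Z(Z\#_t X)=t\log_Z X$ together with uniqueness of the Karcher equation (Theorem~\ref{T:KarcherExist}), and then pass to general $\mu\in\mathcal{P}^1(\mathbb{P})$ by approximating in $W_1$ with finitely supported measures and using the $W_1$-contraction of $\Lambda$ and the $d_\infty$-continuity of $\#_t$. Your explicit remark that uniqueness for general $\mu$ is not yet available at this point, and that the approximation step is therefore genuinely needed, matches exactly the structure of the paper's argument.
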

\begin{proof}
First suppose that $\mu=\sum_{i=1}^nw_i\delta_{A_i}$ where $A_i\in\mathbb{P}$ for $1\leq i\leq n$ and $\omega=(w_1,\ldots,w_n)$ a probability vector. By \eqref{eq:D:resolvent} we have
\begin{equation*}
\tau\int_{\mathbb{P}}\log_{J_{\tau}^\mu(X)}Ad\mu(A)+\log_{J_{\tau}^\mu(X)}X=0
\end{equation*}
and from that it follows that
\begin{equation*}
\begin{split}
\lambda\int_{\mathbb{P}}\log_{J_{\tau}^\mu(X)}Ad\mu(A)+\frac{\lambda}{\tau}\log_{J_{\tau}^\mu(X)}X&=0,\\
\lambda\int_{\mathbb{P}}\log_{J_{\tau}^\mu(X)}Ad\mu(A)+\log_{J_{\tau}^\mu(X)}\left(J_{\tau}^\mu(X)\#_{\frac{\lambda}{\tau}}X\right)&=0,
\end{split}
\end{equation*}
and the above equation still uniquely determines $J_{\tau}^\mu(X)$ as its only positive solution by Theorem~\ref{T:KarcherExist}, thus establishing \eqref{eq:P:ResolventIdentity} for finitely supported measures $\mu$.

The general $\mu\in\mathcal{P}^1(\mathbb{P})$ case of \eqref{eq:P:ResolventIdentity} is obtained by approximating $\mu$ in $W_1$ by a sequence of finitely supported measures $\mu_{i}\in\mathcal{P}^1(\mathbb{P})$ and using \eqref{eq:T:LambdaExists} to show that $J_{\lambda}^{\mu_i}(X)\to J_{\lambda}^\mu(X)$ in $d_\infty$ and also the fact that $\#_t$ appearing in \eqref{eq:P:ResolventIdentity} is also $d_\infty$-continuous, hence obtaining \eqref{eq:P:ResolventIdentity} in the limit as $\mu_i\to\mu$ in $W_1$.
\end{proof}

\begin{proposition}\label{P:ResolventBound}
Given $\mu\in\mathcal{P}^1(\mathbb{P})$, $\lambda>0$ and $X\in\mathbb{P}$ we have
\begin{equation}\label{eq:P:ResolventBound}
\begin{split}
d_\infty(J_{\lambda}^\mu(X),X)&\leq \frac{\lambda}{1+\lambda}\int_{\mathbb{P}}d_\infty(X,A)d\mu(A)\\
d_\infty\left(J_{\lambda_1}^\mu\circ\cdots\circ J_{\lambda_n}^\mu(X),X\right)&\leq \sum_{i=1}^{n}\frac{\lambda_i}{1+\lambda_i}\int_{\mathbb{P}}d_\infty(X,A)d\mu(A).
\end{split}
\end{equation}
\end{proposition}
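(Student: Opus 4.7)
The plan is to reduce both estimates to the $W_1$-contraction of $\Lambda$ (Theorem~\ref{T:LambdaExists}) combined with the trivial identity $\Lambda(\delta_X) = X$, which holds because $Z = X$ is the unique positive solution of the Karcher equation $\log_Z X = 0$. The key additional observation is that $W_1(\rho, \delta_X) = \int_{\mathbb{P}} d_\infty(A, X)\, d\rho(A)$ for every $\rho \in \mathcal{P}^1(\mathbb{P})$, since the product measure is the only coupling of $\rho$ with a Dirac.

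For the first inequality, I would write $J_\lambda^\mu(X) = \Lambda\bigl(\tfrac{\lambda}{\lambda+1}\mu + \tfrac{1}{\lambda+1}\delta_X\bigr)$ and $X = \Lambda(\delta_X)$, then invoke \eqref{eq:T:LambdaExists} to obtain
\[
d_\infty(J_\lambda^\mu(X), X) \le W_1\!\left(\tfrac{\lambda}{\lambda+1}\mu + \tfrac{1}{\lambda+1}\delta_X,\ \delta_X\right) = \tfrac{\lambda}{\lambda+1}\int_{\mathbb{P}} d_\infty(X, A)\, d\mu(A),
\]
where the evaluation uses linearity of the Wasserstein functional against the mixture (via the unique coupling) and that the $\delta_X$ component contributes $0$.

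For the iterated version, I set $X_0 := X$ and $X_k := J_{\lambda_{n-k+1}}^\mu(X_{k-1})$ for $1 \le k \le n$, so that $X_n$ is precisely $J_{\lambda_1}^\mu\circ\cdots\circ J_{\lambda_n}^\mu(X)$. Applying the same contraction-plus-Dirac-coupling trick to $X_k$ compared with $X = \Lambda(\delta_X)$ yields, at each step,
\[
d_\infty(X_k, X) \le \frac{\lambda_{n-k+1}}{1+\lambda_{n-k+1}}\, M + \frac{1}{1+\lambda_{n-k+1}}\, d_\infty(X_{k-1}, X),
\]
where $M := \int_{\mathbb{P}} d_\infty(X, A)\, d\mu(A)$. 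Writing $D_k := d_\infty(X_k, X)$ and $a_k := \lambda_{n-k+1}/(1+\lambda_{n-k+1}) \in (0,1)$, this is the linear recursion $D_k \le a_k M + (1 - a_k) D_{k-1}$ with $D_0 = 0$. Unrolling gives $D_n \le M \sum_{k=1}^n a_k \prod_{j=k+1}^n (1-a_j)$, and bounding the product factors by $1$ produces $D_n \le M \sum_{k=1}^n a_k = \sum_{i=1}^n \tfrac{\lambda_i}{1+\lambda_i}\, M$, as required. There is no substantive obstacle beyond the index bookkeeping, since both claims follow routinely from the $W_1$-contraction and the elementary Dirac-coupling evaluation.
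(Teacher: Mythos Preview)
Your proof is correct and takes a cleaner route than the paper's. For the first inequality the paper works from the Karcher equation itself: it shows
\[
d_\infty(J_\lambda^\mu(X),X)=\left\|\log\bigl(J_\lambda^\mu(X)^{-1/2}XJ_\lambda^\mu(X)^{-1/2}\bigr)\right\|\le \lambda\int_{\mathbb{P}}d_\infty(J_\lambda^\mu(X),A)\,d\mu(A),
\]
which has the ``wrong'' center $J_\lambda^\mu(X)$, and then recenters at $X$ by invoking the existence of a preimage $(J_\lambda^\mu)^{-1}(X)$ together with the resolvent contraction \eqref{eq:P:ResolventContraction}. You bypass all of this by writing $X=\Lambda(\delta_X)$ and applying the $W_1$-contraction \eqref{eq:T:LambdaExists} directly, evaluating the Wasserstein distance to a Dirac in one line. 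For the iterated estimate the paper telescopes $d_\infty(X_n,X)$ along the chain and bounds each increment via \eqref{eq:P:ResolventContraction} and the first inequality; your recursion $D_k\le a_kM+(1-a_k)D_{k-1}$ achieves the same thing but is slightly sharper before you drop the product factors (indeed your unrolled bound $M\sum_k a_k\prod_{j>k}(1-a_j)\le M$ shows the composite map still moves $X$ by at most $M$, which the paper's telescoping does not). The advantage of your approach is economy: it uses only \eqref{eq:T:LambdaExists} and the trivial Dirac coupling, avoiding the Karcher-equation manipulation and the somewhat delicate inverse-resolvent step.
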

\begin{proof}
By Theorem~\ref{T:LambdaExists} $J_{\lambda}^\mu(X)$ is a solution of
\begin{equation}\label{eq1:P:ResolventBound}
\lambda\int_{\mathbb{P}}\log_{J_{\lambda}^\mu(X)}Ad\mu(A)+\log_{J_{\lambda}^\mu(X)}X=0,
\end{equation}
hence we have
\begin{equation*}
\begin{split}
d_\infty(J_{\lambda}^\mu(X),X)&=\left\|\log\left(J_{\lambda}^\mu(X)^{-1/2}XJ_{\lambda}^\mu(X)^{-1/2}\right)\right\|\\
&=\lambda\left\|\int_{\mathbb{P}}\log\left(J_{\lambda}^\mu(X)^{-1/2}AJ_{\lambda}^\mu(X)^{-1/2}\right)d\mu(A)\right\|\\
&\leq \lambda\int_{\mathbb{P}}\left\|\log\left(J_{\lambda}^\mu(X)^{-1/2}AJ_{\lambda}^\mu(X)^{-1/2}\right)\right\|d\mu(A)\\
&=\lambda\int_{\mathbb{P}}d_\infty(J_{\lambda}^\mu(X),A)d\mu(A)
\end{split}
\end{equation*}
Given $J_{\lambda}^\mu(X)\in\mathbb{P}$ we can solve \eqref{eq1:P:ResolventBound} for $X\in\mathbb{P}$, thus by Proposition~\ref{P:ResolventContraction} we also have
\begin{equation*}
d_\infty(J_{\tau}^\mu(X),X)=d_\infty\left(J_{\tau}^\mu(X),J_{\tau}^\mu\left(\left(J_{\tau}^\mu\right)^{-1}(X)\right)\right)\leq \frac{1}{1+\lambda}d_\infty\left(X,\left(J_{\tau}^\mu\right)^{-1}(X)\right),
\end{equation*}
hence the first inequality in \eqref{eq:P:ResolventBound} follows.

The second inequality in \eqref{eq:P:ResolventBound} follows from the first by the estimate
\begin{equation*}
\begin{split}
d_\infty\left(J_{\lambda_1}^\mu\circ\cdots\circ J_{\lambda_n}^\mu(X),X\right)&\leq \sum_{i=0}^{n-1}d_\infty\left(J_{\lambda_1}^\mu\circ\cdots\circ J_{\lambda_{n-i}}^\mu(X),J_{\lambda_1}^\mu\circ\cdots\circ J_{\lambda_{n-i-1}}^\mu(X)\right)\\
&\leq \sum_{i=0}^{n-1}\prod_{j=1}^{n-i-1}(1+\lambda_j)^{-1}d_\infty\left(J_{\lambda_{n-i}}^\mu(X),X\right)\\
&\leq \sum_{i=1}^{n}d_\infty\left(J_{\lambda_{i}}^\mu(X),X\right)\\
&=\sum_{i=1}^{n}\frac{\lambda_i}{1+\lambda_i}\int_{\mathbb{P}}d_\infty(X,A)d\mu(A).
\end{split}
\end{equation*}
\end{proof}

The following estimate is a generalization of the one in \cite{limpalfia3} in the sense that it allows non-uniform subdivisions of the time interval $[0,\infty)$.

\begin{theorem}\label{T:KobayashiEst}
Let $\{t_i\}_{i\in\mathbb{N}},\{\hat{t}_j\}_{j\in\mathbb{N}}$ denote sequences such that $t_i,\hat{t}_j\in[0,\infty)$ and $t_{i+1}>t_i,\hat{t}_{j+1}>\hat{t}_j$. Let $X\in\mathbb{P}$ and $\mu\in\mathcal{P}^1(\mathbb{P})$ and $\tau_i:=t_{i}-t_{i-1}$, $\hat{\tau}_i:=\hat{t}_{i}-\hat{t}_{i-1}$. Let $X_{0}=\hat{X}_{0}=X$ and define $X_{i+1}:=J_{\tau_{i+1}}^\mu(X_{i})$ and $\hat{X}_{i+1}:=J_{\hat{\tau}_{i+1}}^\mu(\hat{X}_{i})$. Let $m,n\in\mathbb{N}$. 
Then
\begin{equation}\label{eq:T:KobayashiEst}
d_\infty(X_m,\hat{X}_n)\leq \left[\prod_{j=1}^{\min\{m,n\}}(1+\min\{\tau_j,\hat{\tau}_j\})^{-1}\right]\left[(t_m-\hat{t}_n)^2+\sigma_m+\hat{\sigma}_n\right]^{1/2}C
\end{equation}
where $\sigma_m=\sum_{i=1}^m(t_i-t_{i-1})^2,\hat{\sigma}_n=\sum_{i=1}^n(\hat{t}_i-\hat{t}_{i-1})^2$ and $C=\int_{\mathbb{P}}d_\infty(X,A)d\mu(A)$.
\end{theorem}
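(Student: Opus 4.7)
The plan is to follow the classical Crandall--Liggett--Kobayashi scheme for discrete resolvent iterates, adapted to the Banach--Finsler setting via the resolvent identity (Proposition~\ref{P:ResolventIdentity}) and the geodesic $d_\infty$-convexity of the weighted geometric mean. I proceed by double induction on $(m,n)$ ordered by $m+n$.

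For the base case, either $m=0$ or $n=0$; by symmetry say $m=0$, so $X_0=X$ and the product in \eqref{eq:T:KobayashiEst} is empty. Proposition~\ref{P:ResolventBound} gives
\begin{equation*}
d_\infty(X,\hat X_n)\le \sum_{i=1}^{n}\frac{\hat\tau_i}{1+\hat\tau_i}\,C\le \hat t_n C\le \sqrt{\hat t_n^{\,2}+\hat\sigma_n}\,C,
\end{equation*}
which matches \eqref{eq:T:KobayashiEst} since $(t_m-\hat t_n)^2=\hat t_n^{\,2}$ and $\sigma_m=0$.

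For the inductive step, the statement is symmetric under swapping the two sequences, so I may assume $\tau_m\ge\hat\tau_n$. Applying Proposition~\ref{P:ResolventIdentity} with $\tau=\tau_m$ and $\lambda=\hat\tau_n$ rewrites $X_m=J_{\hat\tau_n}^{\mu}(X_m\#_{\hat\tau_n/\tau_m}X_{m-1})$, so that together with $\hat X_n=J_{\hat\tau_n}^{\mu}(\hat X_{n-1})$ the contraction of Proposition~\ref{P:ResolventContraction} yields
\begin{equation*}
d_\infty(X_m,\hat X_n)\le \tfrac{1}{1+\hat\tau_n}\,d_\infty\!\bigl(X_m\#_{\hat\tau_n/\tau_m}X_{m-1},\,\hat X_{n-1}\bigr).
\end{equation*}
The joint $d_\infty$-convexity of $\#_t$ (a standard consequence of the Loewner-monotonicity of the weighted geometric mean, giving $d_\infty(A\#_tB,C)\le(1-t)d_\infty(A,C)+t\,d_\infty(B,C)$) produces the key recursion
\begin{equation*}
a_{m,n}\le \tfrac{1}{1+\hat\tau_n}\!\left[\bigl(1-\tfrac{\hat\tau_n}{\tau_m}\bigr)a_{m,n-1}+\tfrac{\hat\tau_n}{\tau_m}a_{m-1,n-1}\right],\quad a_{i,j}:=d_\infty(X_i,\hat X_j).
\end{equation*}

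Closing the induction is a purely algebraic task. Inserting the inductive bounds for $a_{m,n-1}$ and $a_{m-1,n-1}$ and applying Cauchy--Schwarz to the convex combination of the two square roots reduces the problem to checking
\begin{equation*}
(1-s)\bigl[(u+\hat\tau_n)^2+\sigma_m+\hat\sigma_{n-1}\bigr]+s\bigl[(u+\hat\tau_n-\tau_m)^2+\sigma_{m-1}+\hat\sigma_{n-1}\bigr]\le u^2+\sigma_m+\hat\sigma_n
\end{equation*}
with $s=\hat\tau_n/\tau_m\in[0,1]$ and $u=t_m-\hat t_n$. A direct expansion, using $\sigma_m-\sigma_{m-1}=\tau_m^2$, $\hat\sigma_n-\hat\sigma_{n-1}=\hat\tau_n^2$ and $s\tau_m=\hat\tau_n$, collapses the difference of the two sides to the non-positive quantity $-2\hat\tau_n^2$, so the inequality holds. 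Finally, the extra contraction factor $(1+\hat\tau_n)^{-1}$ produced above dominates $(1+\min\{\tau_n,\hat\tau_n\})^{-1}$ since $\hat\tau_n\ge \min\{\tau_n,\hat\tau_n\}$, so it absorbs the missing term in the product upon comparing $\prod_{j=1}^{\min\{m,n\}-1}$ from the inductive hypothesis with $\prod_{j=1}^{\min\{m,n\}}$ in the target.

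The main obstacle is the bookkeeping of the running product when $\min\{m,n\}$ does not strictly decrease in one of the two recursive branches (for instance, when $m<n$ the move $(m,n)\to(m,n-1)$ keeps $\min\{m,n\}=m$ fixed); in such cases the inductive hypothesis already carries the full product factor, and the additional $(1+\hat\tau_n)^{-1}$ extracted above only tightens the bound rather than being needed for absorption, so the argument still goes through. Once this case analysis is handled carefully, the algebra above yields \eqref{eq:T:KobayashiEst}.
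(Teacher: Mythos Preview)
Your proposal is correct and follows essentially the same route as the paper: double induction with the base case supplied by Proposition~\ref{P:ResolventBound}, and the inductive step obtained by combining the resolvent identity (Proposition~\ref{P:ResolventIdentity}), the resolvent contraction (Proposition~\ref{P:ResolventContraction}), geodesic $d_\infty$-convexity of $\#_t$, and Cauchy--Schwarz, followed by an elementary algebraic check. The only cosmetic difference is the indexing---the paper steps forward to $(m+1,n+1)$ from $(m,n)$ and $(m,n+1)$ under the assumption $\hat\tau_{n+1}\ge\tau_{m+1}$, while you step back to $(m,n-1)$ and $(m-1,n-1)$ under $\tau_m\ge\hat\tau_n$---and both treatments handle the running product factor with comparable brevity.
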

\begin{proof}
We prove \eqref{eq:T:KobayashiEst} by induction on $n,m\in\mathbb{N}$. Let $a_{m,n}:=d_\infty(X_m,\hat{X}_n)$. Firstly, it follows from \eqref{eq:P:ResolventBound} that $a_{0,k}$ satisfies \eqref{eq:T:KobayashiEst}, and by symmetry $a_{k,0}$ as well for $k\in\mathbb{N}$.

Assume \eqref{eq:T:KobayashiEst} holds for $n,m\in\mathbb{N}$. Assume first that $\hat{\tau}_{n+1}\geq \tau_{m+1}$. Then
\begin{equation*}
\begin{split}
a_{m+1,n+1}&=d_\infty\left(J^\mu_{\tau_{m+1}}(X_m),J^\mu_{\hat{\tau}_{n+1}}(\hat{X}_n)\right)\\
&=d_\infty\left(J_{\tau_{m+1}}^\mu(X_m),J_{\tau_{m+1}}^\mu\left(J_{\hat{\tau}_{n+1}}^\mu(\hat{X}_n)\#_{\frac{\tau_{m+1}}{\hat{\tau}_{n+1}}}\hat{X}_n\right)\right)\\
&\leq(1+\tau_{m+1})^{-1}d_\infty\left(X_m,J_{\hat{\tau}_{n+1}}^\mu(\hat{X}_n)\#_{\frac{\tau_{m+1}}{\hat{\tau}_{n+1}}}\hat{X}_n\right)\\
&\leq(1+\tau_{m+1})^{-1}\left[\frac{\tau_{m+1}}{\hat{\tau}_{n+1}}d_\infty(X_m,\hat{X}_n)+\frac{\hat{\tau}_{n+1}-\tau_{m+1}}{\hat{\tau}_{n+1}}d_\infty(X_m,J_{\hat{\tau}_{n+1}}^\mu(\hat{X}_n))\right]\\
&=(1+\tau_{m+1})^{-1}\left(\frac{\tau_{m+1}}{\hat{\tau}_{n+1}}a_{m,n}+\frac{\hat{\tau}_{n+1}-\tau_{m+1}}{\hat{\tau}_{n+1}}a_{m,n+1}\right)\\
&\leq(1+\tau_{m+1})^{-1}\left(\frac{\tau_{m+1}}{\hat{\tau}_{n+1}}+\frac{\hat{\tau}_{n+1}-\tau_{m+1}}{\hat{\tau}_{n+1}}\right)^{1/2}\\
&\quad\times\left(\frac{\tau_{m+1}}{\hat{\tau}_{n+1}}a^2_{m,n}+\frac{\hat{\tau}_{n+1}-\tau_{m+1}}{\hat{\tau}_{n+1}}a^2_{m,n+1}\right)^{1/2}\\
&=(1+\tau_{m+1})^{-1}\left(\frac{\tau_{m+1}}{\hat{\tau}_{n+1}}a^2_{m,n}+\frac{\hat{\tau}_{n+1}-\tau_{m+1}}{\hat{\tau}_{n+1}}a^2_{m,n+1}\right)^{1/2},
\end{split}
\end{equation*}
where first we used the Resolvent Identity \eqref{eq:P:ResolventIdentity}, followed by the Contraction Property \eqref{eq:P:ResolventContraction}, then the Convexity \eqref{eq0:P:KarcherW1contracts} and finally the Cauchy-Schwarz inequality. In particular it follows that
\begin{equation}\label{eq1:T:KobayashiEst}
\hat{\tau}_{n+1}(1+\tau_{m+1})^{2}a^2_{m+1,n+1}\leq\tau_{m+1}a^2_{m,n}+(\hat{\tau}_{n+1}-\tau_{m+1})a^2_{m,n+1},
\end{equation}
and by using the induction hypothesis
\begin{equation*}
\begin{split}
&\hat{\tau}_{n+1}(1+\tau_{m+1})^{2}a^2_{m+1,n+1}\\
&\leq\tau_{m+1}\prod_{j=1}^{\min\{m,n\}}(1+\min\{\tau_j,\hat{\tau}_j\})^{-2}\left[(t_m-\hat{t}_n)^2+\sigma_m+\hat{\sigma}_n\right]C^2\\
&+(\hat{\tau}_{n+1}-\tau_{m+1})\prod_{j=1}^{\min\{m,n+1\}}(1+\min\{\tau_j,\hat{\tau}_j\})^{-2}\left[(t_m-\hat{t}_{n+1})^2+\sigma_m+\hat{\sigma}_{n+1}\right]C^2.
\end{split}
\end{equation*}
Thus, what remains to be verified is that
\begin{equation*}
\begin{split}
&\tau_{m+1}\prod_{j=1}^{\min\{m,n\}}(1+\min\{\tau_j,\hat{\tau}_j\})^{-2}\left[(t_m-\hat{t}_n)^2+\sigma_m+\hat{\sigma}_n\right]\\
&+(\hat{\tau}_{n+1}-\tau_{m+1})\prod_{j=1}^{\min\{m,n+1\}}(1+\min\{\tau_j,\hat{\tau}_j\})^{-2}\left[(t_m-\hat{t}_{n+1})^2+\sigma_m+\hat{\sigma}_{n+1}\right]\\
&\leq\hat{\tau}_{n+1}(1+\tau_{m+1})^{2}\prod_{j=1}^{\min\{m+1,n+1\}}(1+\min\{\tau_j,\hat{\tau}_j\})^{-2}\\
&\quad\times\left[(t_{m+1}-\hat{t}_{n+1})^2+\sigma_{m+1}+\hat{\sigma}_{n+1}\right]
\end{split}
\end{equation*}
which follows if
\begin{equation*}
\begin{split}
&\tau_{m+1}\left[(t_m-\hat{t}_n)^2+\sigma_m+\hat{\sigma}_n\right]+\hat{\tau}_{n+1}\left[(t_m-\hat{t}_{n+1})^2+\sigma_m+\hat{\sigma}_{n+1}\right]\\
&\leq\hat{\tau}_{n+1}\left[(t_{m+1}-\hat{t}_{n+1})^2+\sigma_{m+1}+\hat{\sigma}_{n+1}\right]+\tau_{m+1}\left[(t_m-\hat{t}_{n+1})^2+\sigma_m+\hat{\sigma}_{n+1}\right].
\end{split}
\end{equation*}
After substitution, expanding the terms and cancellation the above simplifies to
\begin{equation*}
(t_{m+1}-t_{m})^2(\hat{t}_{n+1}-\hat{t}_n)\geq 0
\end{equation*}
which is trivially satisfied by the assumptions on the sequences $t_j$ and $\hat{t}_j$.

In the other case when $\hat{\tau}_{n+1}<\tau_{m+1}$, we can follow the same argument to obtain that $a_{m+1,n+1}$ satisfies \eqref{eq:T:KobayashiEst}. In the remaining cases when either $m=0$ or $n=0$ we arrive at \eqref{eq:T:KobayashiEst} by \eqref{eq:P:ResolventBound}. This completes the induction on $n,m\in\mathbb{N}$, and the proof is complete.
\end{proof}

\begin{theorem}\label{T:ExponentialFormula}
For any $X,Y\in\mathbb{P}$ and $t>0$ the curve
\begin{equation}\label{eq1:T:ExponentialFormula}
S(t)X:=\lim_{n\to\infty}\left(J_{t/n}^{\mu}\right)^{n}(X)
\end{equation}
exists where the limit is in the $d_\infty$-topology and it is Lipschitz-continuous on compact time intervals $[0,T]$ for any $T>0$. Moreover it satisfies the contraction property
\begin{equation}\label{eq2:T:ExponentialFormula}
d_\infty\left(S(t)X,S(t)Y\right)\leq e^{-t}d_\infty(X,Y),
\end{equation}
and for $s>0$ verifies the semigroup property
\begin{equation}\label{eq3:T:ExponentialFormula}
S(t+s)X=S(t)(S(s)X),
\end{equation}
and the flow operator $S:\mathbb{P}\times(0,\infty)\mapsto \mathbb{P}$ extends by $d_\infty$-continuity to $S:\mathbb{P}\times[0,\infty)\mapsto \mathbb{P}$.
\end{theorem}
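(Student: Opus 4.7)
The plan is to follow the classical Crandall--Liggett exponential formula scheme, with Theorem~\ref{T:KobayashiEst} serving as the workhorse. First I would establish existence of the limit \eqref{eq1:T:ExponentialFormula}. Fix $X\in\mathbb{P}$ and $t>0$, and for integers $m,n$ apply Theorem~\ref{T:KobayashiEst} to the two uniform partitions $t_i=it/m$ and $\hat{t}_j=jt/n$. Then $t_m=\hat{t}_n=t$, $\sigma_m=t^2/m$, $\hat{\sigma}_n=t^2/n$, and the product factor is bounded by $1$, so
\begin{equation*}
d_\infty\left((J^\mu_{t/m})^m(X),\,(J^\mu_{t/n})^n(X)\right)\leq\left(\frac{t^2}{m}+\frac{t^2}{n}\right)^{1/2}C,
\end{equation*}
which is Cauchy in $m,n$. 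Hence $S(t)X$ is well-defined in the complete metric space $(\mathbb{P},d_\infty)$.

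Next, the contraction \eqref{eq2:T:ExponentialFormula} comes from iterating Proposition~\ref{P:ResolventContraction} to obtain $d_\infty((J^\mu_{t/n})^n(X),(J^\mu_{t/n})^n(Y))\leq(1+t/n)^{-n}d_\infty(X,Y)$ and letting $n\to\infty$, since $(1+t/n)^{-n}\to e^{-t}$. Lipschitz-continuity of $t\mapsto S(t)X$ on compact intervals follows by applying Theorem~\ref{T:KobayashiEst} to uniform subdivisions with different endpoints $s,t$; passing to the limit in both $m$ and $n$ gives $d_\infty(S(t)X,S(s)X)\leq|t-s|C$. In particular $d_\infty(S(t)X,X)\leq tC\to 0$ as $t\to 0^+$, so defining $S(0)X:=X$ yields the claimed continuous extension to $[0,\infty)$.

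The semigroup property \eqref{eq3:T:ExponentialFormula} is the most delicate step and uses precisely the non-uniformity of subdivisions permitted by Theorem~\ref{T:KobayashiEst}. For fixed $s,t>0$ and large $N$, I would compare the non-uniform partition of $[0,s+t]$ consisting of $N$ steps of length $s/N$ followed by $N$ steps of length $t/N$ with the uniform partition of $2N$ steps of length $(s+t)/(2N)$. Both partitions reach final time $s+t$, both have $\sigma$ and $\hat{\sigma}$ of order $1/N$, and the product factor is at most $1$, so Theorem~\ref{T:KobayashiEst} gives
\begin{equation*}
d_\infty\left((J^\mu_{t/N})^N\circ(J^\mu_{s/N})^N(X),\,(J^\mu_{(s+t)/(2N)})^{2N}(X)\right)\to 0.
\end{equation*}
The right-hand iterate converges to $S(s+t)X$ by construction. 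For the left, insert the intermediate point $(J^\mu_{t/N})^N(S(s)X)$: by Proposition~\ref{P:ResolventContraction} applied $N$ times the first piece is bounded by $(1+t/N)^{-N}d_\infty((J^\mu_{s/N})^N(X),S(s)X)\to 0$, and the second piece tends to $0$ by the definition of $S(t)$ at the fixed argument $S(s)X$. Thus the left iterate tends to $S(t)(S(s)X)$, yielding the semigroup identity.

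The hard part is precisely this last comparison, where one must match an $N$-fold composition of one resolvent with an $N$-fold composition of a different resolvent against a $2N$-fold iteration of a single uniform resolvent. Without the non-uniform Kobayashi-type estimate of Theorem~\ref{T:KobayashiEst}, which upgrades the estimate of \cite{limpalfia3}, this route to the semigroup property would be unavailable, and a substantially different argument would be required to glue $S(s)$ and $S(t)$ together in the Banach--Finsler setting where neither a potential function nor uniform convexity of the distance is available.
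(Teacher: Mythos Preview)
Your proposal is correct and follows essentially the same Crandall--Liggett scheme as the paper: the paper derives the Cauchy estimate \eqref{eq4:T:ExponentialFormula} and the Lipschitz bound \eqref{eq5:T:ExponentialFormula} from Theorem~\ref{T:KobayashiEst} exactly as you do, obtains the contraction from iterating Proposition~\ref{P:ResolventContraction}, and for the semigroup property simply defers to \cite{crandall}. Your explicit argument for \eqref{eq3:T:ExponentialFormula} via comparing the concatenated partition $(s/N)^N$-then-$(t/N)^N$ with the uniform $(s+t)/(2N)$ partition is a clean and correct instantiation of that deferred step, and it is precisely here that the non-uniform form of Theorem~\ref{T:KobayashiEst} is used; your closing remark identifies this correctly.
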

\begin{proof}
The proof follows that of Theorem I in \cite{crandall} using the previous estimates of this section. In particular for $n\geq m>0$ one obtains
\begin{equation}\label{eq4:T:ExponentialFormula}
d_\infty\left(\left(J_{t/n}^\mu\right)^{n}(X),\left(J_{t/m}^\mu\right)^{m}(X)\right)\leq t\left(\frac{1}{m}+\frac{1}{n}\right)^{1/2}\int_{\mathbb{P}}d_\infty(X,A)d\mu(A),
\end{equation}
so $\lim_{n\to\infty}\left(J_{t/n}^\mu\right)^{n}(X)$ exists proving \eqref{eq1:T:ExponentialFormula}. Also by \eqref{eq:P:ResolventContraction}, $\left(J_{t/n}^\mu\right)^{n}$ satisfies
\begin{equation*}
d_\infty\left(\left(J_{t/n}^\mu\right)^{n}(X),\left(J_{t/n}^\mu\right)^{n}(Y)\right)\leq \left(1+\frac{t}{n}\right)^{-n}d_\infty(X,Y),
\end{equation*}
hence also \eqref{eq2:T:ExponentialFormula}. We also have
\begin{equation}\label{eq5:T:ExponentialFormula}
d_\infty\left(S(s)X,S(t)X\right)\leq |s-t|\int_{\mathbb{P}}d_\infty(X,A)d\mu(A)
\end{equation}
proving Lipschitz-continuity in $t$ on compact time intervals. The proof of the semigroup property is exactly the same as in \cite{crandall}.
\end{proof}

We need some basic estimates for the remainder of Taylor series expansions of $\exp(X)$ and $\log(X)$ for self-adjoint $X\in\mathbb{S}$.
\begin{lemma}\label{L:SeriesExpansions}
Let $X\in\mathbb{S}$. Then we have
\begin{equation}\label{eq:L:SeriesExpansions1}
\|\exp(X)-(X+I)\|\leq \|X\|^2\frac{e^{\|X\|}}{2}.
\end{equation}
Moreover for $\|X-I\|<1$ we have
\begin{equation}\label{eq:L:SeriesExpansions2}
\|\log(X)-(X-I)\|\leq \frac{\|X-I\|^2}{2(1-\|X-I\|)}.
\end{equation}
\end{lemma}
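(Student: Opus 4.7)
The plan is to expand $\exp$ and $\log$ in their norm-convergent power series in the Banach algebra $\mathcal{B}(\mathcal{H})$, then bound each remainder by passing the norm inside the sum and invoking the submultiplicativity $\|X^k\|\leq\|X\|^k$ of the operator norm.

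For \eqref{eq:L:SeriesExpansions1}, I would start from the identity
\begin{equation*}
\exp(X)-(X+I)=\sum_{k=2}^{\infty}\frac{X^k}{k!},
\end{equation*}
which converges absolutely in operator norm for every $X\in\mathbb{S}$. Taking norms termwise yields
\begin{equation*}
\|\exp(X)-(X+I)\|\leq\sum_{k=2}^{\infty}\frac{\|X\|^k}{k!}=\|X\|^2\sum_{j=0}^{\infty}\frac{\|X\|^j}{(j+2)!}.
\end{equation*}
The estimate then follows from the elementary inequality $(j+2)!\geq 2\cdot j!$ valid for every $j\geq 0$, which bounds the last sum by $\tfrac{1}{2}e^{\|X\|}$.

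For \eqref{eq:L:SeriesExpansions2}, the hypothesis $\|X-I\|<1$ ensures that the power series
\begin{equation*}
\log(X)=\log\bigl(I+(X-I)\bigr)=\sum_{k=1}^{\infty}\frac{(-1)^{k+1}}{k}(X-I)^k
\end{equation*}
converges absolutely in operator norm. Subtracting the $k=1$ term and passing norms inside gives
\begin{equation*}
\|\log(X)-(X-I)\|\leq\sum_{k=2}^{\infty}\frac{\|X-I\|^k}{k}\leq\frac{1}{2}\sum_{k=2}^{\infty}\|X-I\|^k,
\end{equation*}
where I have used the monotonicity $1/k\leq 1/2$ for $k\geq 2$. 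The remaining geometric series sums to $\|X-I\|^2/(1-\|X-I\|)$, producing exactly the claimed bound.

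There is no genuine obstacle in this lemma; both estimates are routine Banach-algebra computations, and the only bookkeeping required is the index shift $k\mapsto j+2$ in the first inequality and the factor $1/2$ extracted from $1/k$ in the second.
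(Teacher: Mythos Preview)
Your proof is correct and essentially identical to the paper's own argument: both expand $\exp$ and $\log$ in their power series, pass the norm inside, extract the leading factor $\|X\|^2$ (respectively $\|X-I\|^2/2$), and bound the remaining tail by $e^{\|X\|}/2$ via $(j+2)!\geq 2\,j!$ (respectively by the geometric series). There is nothing to add.
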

\begin{proof}
We have
\begin{equation*}
\begin{split}
\|\exp(X)-(X+I)\|&=\left\|\sum_{k=0}^\infty\frac{1}{k!}X^k-X-I\right\|\leq\sum_{k=2}^\infty\frac{\|X\|^k}{k!}\\
&\leq\|X\|^2\sum_{k=0}^\infty\frac{\|X\|^k}{(k+2)!}\leq \|X\|^2\frac{e^{\|X\|}}{2}
\end{split}
\end{equation*}
establishing \eqref{eq:L:SeriesExpansions1}. If $\|X-I\|<1$ we have
\begin{equation*}
\begin{split}
\|\log(X)-(X-I)\|&=\left\|\sum_{k=1}^\infty\frac{(-1)^{k-1}}{k}(X-I)^k-(X-I)\right\|\leq\sum_{k=2}^\infty\frac{\|X-I\|^k}{k}\\
&\leq\frac{\|X-I\|^2}{2}\sum_{k=0}^\infty\|X-I\|^k\leq \frac{\|X-I\|^2}{2(1-\|X-I\|)}
\end{split}
\end{equation*}
proving \eqref{eq:L:SeriesExpansions2}.
\end{proof}

Before stating the next result we need another auxiliary lemma describing the asymptotic behavior of $J_{t/n}^{\mu}(X)$.

\begin{lemma}\label{L:ResolventAsymptotics}
Let $\mu\in\mathcal{P}^1(\mathbb{P})$, $X\in\mathbb{P}$ and $C:=\int_{\mathbb{P}}d_\infty(X,A)d\mu(A)$. Then for any $\frac{\log(2)}{C}>\lambda>0$ we have
\begin{equation}\label{eq:L:ResolventAsymptotics}
\log_{J_{\lambda}^{\mu}(X)}X=X-J_{\lambda}^{\mu}(X)+J_{\lambda}^{\mu}(X)^{1/2}O\left((C\lambda)^2\right)J_{\lambda}^{\mu}(X)^{1/2}.
\end{equation}
\end{lemma}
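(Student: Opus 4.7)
The plan is to expand around $Z := J_\lambda^\mu(X)$ and reduce the claim to the scalar/operator Taylor estimate of Lemma~\ref{L:SeriesExpansions}. The key algebraic observation is the exact identity
\begin{equation*}
Z^{1/2}\bigl(Z^{-1/2}XZ^{-1/2}-I\bigr)Z^{1/2}=X-Z,
\end{equation*}
which, after subtracting from the definition $\log_{Z}X=Z^{1/2}\log(Z^{-1/2}XZ^{-1/2})Z^{1/2}$, gives
\begin{equation*}
\log_{Z}X-(X-Z)=Z^{1/2}\bigl[\log(Z^{-1/2}XZ^{-1/2})-(Z^{-1/2}XZ^{-1/2}-I)\bigr]Z^{1/2}.
\end{equation*}
Thus the problem collapses to showing that the bracketed self-adjoint operator $E$ has operator norm $O((C\lambda)^{2})$, which is exactly the form of the conclusion.

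To apply \eqref{eq:L:SeriesExpansions2} I need $\|Z^{-1/2}XZ^{-1/2}-I\|<1$. First I would invoke Proposition~\ref{P:ResolventBound} to get the a priori Thompson-metric estimate $d_{\infty}(Z,X)\leq \tfrac{\lambda}{1+\lambda}C\leq\lambda C$. Combining with the sandwich inequality \eqref{eq:order} yields
\begin{equation*}
e^{-\lambda C}I\leq Z^{-1/2}XZ^{-1/2}\leq e^{\lambda C}I,\qquad\text{hence}\qquad \|Z^{-1/2}XZ^{-1/2}-I\|\leq e^{\lambda C}-1.
\end{equation*}
The hypothesis $\lambda<\log(2)/C$ is precisely what keeps this quantity strictly below $1$, which unlocks the Taylor-type bound \eqref{eq:L:SeriesExpansions2}:
\begin{equation*}
\|E\|\leq \frac{(e^{\lambda C}-1)^{2}}{2\bigl(2-e^{\lambda C}\bigr)}.
\end{equation*}

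Finally I would absorb this into the $O$-notation. Using $e^{\lambda C}-1\leq \lambda C\, e^{\lambda C}$ and noting that for any $\lambda_{0}$ with $\lambda_{0}C$ strictly less than $\log 2$, the denominator $2-e^{\lambda C}$ stays bounded away from $0$ on $(0,\lambda_{0}]$, one gets $\|E\|\leq M(C\lambda)^{2}$ for an absolute constant $M$. This matches the paper's $O(\cdot)$ convention, which only requires the estimate on some interval $(0,\lambda_{0}]$. I do not foresee a serious obstacle here; the only mildly delicate point is bookkeeping the constants so that the displayed threshold $\log(2)/C$ (which controls convergence of the $\log$ series) and the (possibly smaller) $\lambda_{0}$ needed to extract a clean $O((C\lambda)^{2})$ bound coexist comfortably within the paper's $O$-notation convention.
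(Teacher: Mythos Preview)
Your argument is correct and is essentially identical to the paper's proof: both invoke Proposition~\ref{P:ResolventBound} together with \eqref{eq:order} to bound $\|Z^{-1/2}XZ^{-1/2}-I\|$ by $e^{\lambda C}-1$, then apply \eqref{eq:L:SeriesExpansions2} and conjugate by $Z^{1/2}$. You have simply written out explicitly the algebraic identity $Z^{1/2}(Z^{-1/2}XZ^{-1/2}-I)Z^{1/2}=X-Z$ that the paper leaves implicit in its final ``from which the assertion follows''.
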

\begin{proof}
By Proposition~\ref{P:ResolventBound}
\begin{equation*}
e^{-\lambda\left(1+\lambda\right)^{-1}C}-I\leq J_{\lambda}^{\mu}(X)^{-1/2}XJ_{\lambda}^{\mu}(X)^{-1/2}-I\leq e^{\lambda\left(1+\lambda\right)^{-1}C}-I,
\end{equation*}
hence
\begin{equation*}
e^{-\lambda C}-I\leq J_{\lambda}^{\mu}(X)^{-1/2}XJ_{\lambda}^{\mu}(X)^{-1/2}-I\leq e^{\lambda C}-I.
\end{equation*}
Now $(e^{\lambda C}-1)^2\geq 0$ and the assumption implies $e^{\lambda C}-1\geq 1-e^{-\lambda C}\geq 0$, thus the above yields
\begin{equation}\label{eq1:L:ResolventAsymptotics}
\|J_{\lambda}^{\mu}(X)^{-1/2}XJ_{\lambda}^{\mu}(X)^{-1/2}-I\|\leq e^{\lambda C}-1\leq O(\lambda C).
\end{equation}
Thus in view of the series expansion \eqref{eq:L:SeriesExpansions2}, we get
\begin{equation*}
\begin{split}
\log\left(J_{\lambda}^{\mu}(X)^{-1/2}XJ_{\lambda}^{\mu}(X)^{-1/2}\right)&\\
=J_{\lambda}^{\mu}(X)^{-1/2}&XJ_{\lambda}^{\mu}(X)^{-1/2}-I+O\left((C\lambda)^2\right),
\end{split}
\end{equation*}
from which the assertion follows.
\end{proof}

The proof of the following theorem, in essence, is analogous to that of Theorem II in \cite{crandall}.

\begin{theorem}\label{T:StrongSolution}
Let $\mu\in\mathcal{P}^1(\mathbb{P})$ and $X\in\mathbb{P}$. Then for $t>0$, the curve $X(t):=S(t)X$ provides a strong solution of the Cauchy problem
\begin{equation*}
\begin{split}
X(0)&:=X,\\
\dot{X}(t)&=\int_{\mathbb{P}}\log_{X(t)}Ad\mu(A),
\end{split}
\end{equation*}
where the derivative $\dot{X}(t)$ is the Fr\'echet-derivative.
\end{theorem}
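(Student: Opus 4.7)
The plan is to follow the Crandall--Liggett template of Theorem II in \cite{crandall}, exploiting the norm-continuity of the vector field $F(X) := \int_{\mathbb{P}} \log_X A\,d\mu(A)$ established in Lemma~\ref{L:gradContinuous}. By the semigroup property \eqref{eq3:T:ExponentialFormula}, for any $t > 0$ and $Y := S(t)X$ the difference quotient $(S(t+h)X - S(t)X)/h$ equals $(S(h)Y - Y)/h$, so it suffices to compute the right derivative at $0$ of $h\mapsto S(h)Y$ for an arbitrary starting point $Y\in\mathbb{P}$; set $C := \int_{\mathbb{P}} d_\infty(Y,A)\,d\mu(A)$.

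As a preliminary step for the resolvent alone, I rearrange the defining Karcher equation of $J^\mu_h(Y)$ into $\log_{J^\mu_h(Y)} Y = -h\,F(J^\mu_h(Y))$, while Lemma~\ref{L:ResolventAsymptotics} supplies the expansion $\log_{J^\mu_h(Y)} Y = Y - J^\mu_h(Y) + J^\mu_h(Y)^{1/2}\,O(h^2)\,J^\mu_h(Y)^{1/2}$. Since Proposition~\ref{P:ResolventBound} keeps $\|J^\mu_h(Y)\|$ bounded for small $h$, dividing by $h$ and invoking Lemma~\ref{L:gradContinuous} for $F(J^\mu_h(Y))\to F(Y)$ yields $(J^\mu_h(Y) - Y)/h \to F(Y)$ in operator norm as $h\to 0^+$.

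The main obstacle is that the Kobayashi-type estimate of Theorem~\ref{T:KobayashiEst} only delivers $d_\infty(S(h)Y, J^\mu_h(Y)) = O(h)$ rather than $o(h)$, so the resolvent derivative cannot be transplanted directly to $S(h)Y$. To bridge the gap I would apply Lemma~\ref{L:ResolventAsymptotics} iteratively along the discrete trajectory $Y_0 := Y$, $Y_{k+1} := J^\mu_{h/n}(Y_k)$, obtaining
\[
Y_{k+1} - Y_k \;=\; \tfrac{h}{n}\,F(Y_{k+1}) \;+\; Y_{k+1}^{1/2}\,O\bigl((h/n)^2\bigr)\,Y_{k+1}^{1/2}.
\]
Proposition~\ref{P:ResolventBound} bounds each $\|Y_k\|$ and $\int d_\infty(Y_k,A)\,d\mu(A)$ uniformly for $0\leq k\leq n$ and small $h$, so telescoping the $n$ steps produces a cumulative remainder of $O(h^2/n)$. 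Dividing by $h$ and passing $n\to\infty$ via the exponential formula \eqref{eq1:T:ExponentialFormula} identifies the left-hand side with $(S(h)Y - Y)/h$, yielding
\[
\frac{S(h)Y - Y}{h} \;=\; \lim_{n\to\infty}\frac{1}{n}\sum_{k=1}^n F(Y_k) \;+\; O(h).
\]
Since $d_\infty(Y_k,Y)\leq hC$ uniformly in $k$ by Proposition~\ref{P:ResolventBound}, norm-continuity of $F$ at $Y$ from Lemma~\ref{L:gradContinuous} forces $\|F(Y_k) - F(Y)\|$ to be uniformly small once $h$ is small, so the averaged sum tends to $F(Y)$ as $h\to 0^+$. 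Combined with the reduction in the first paragraph this produces $\frac{d^+}{dt}S(t)X = F(S(t)X)$.

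Finally, since $t\mapsto F(S(t)X)$ is norm continuous --- as the composition of the continuous $F$ with the Lipschitz flow of Theorem~\ref{T:ExponentialFormula} --- the elementary fact that a Lipschitz curve in a Banach space possessing a continuous right derivative is automatically $C^1$ with two-sided Fr\'echet derivative upgrades the statement to a strong solution of the Cauchy problem.
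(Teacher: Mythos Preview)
Your argument is correct and follows essentially the same route as the paper: reduce via the semigroup property to the right derivative at the origin, telescope $(J^\mu_{t/n})^n(Y)-Y$, combine the resolvent Karcher equation with the expansion of Lemma~\ref{L:ResolventAsymptotics}, and then invoke the uniform bound $d_\infty(Y_k,Y)\leq tC$ from Proposition~\ref{P:ResolventBound} together with the norm-continuity of $F$ from Lemma~\ref{L:gradContinuous}. Your added preliminary computation for a single resolvent step and the explicit upgrade from the right derivative to a two-sided Fr\'echet derivative are welcome clarifications (the paper leaves the latter implicit); the only cosmetic slip is the stray ``$+\,O(h)$'' in your displayed identity, which should vanish after $n\to\infty$ since the cumulative remainder is $O(h^2/n)$---but this is harmless for the subsequent $h\to 0^+$ limit.
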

\begin{proof}
Due to the semigroup property of $S(t)$, it is enough to check that
\begin{equation*}
\lim_{t\to 0+}\frac{S(t)X-X}{t}=\int_{\mathbb{P}}\log_{X}Ad\mu(A)
\end{equation*}
where the limit is in the norm topology. We have
\begin{equation*}
\begin{split}
\frac{S(t)X-X}{t}&=\lim_{n\to\infty}\frac{\left(J_{t/n}^{\mu}\right)^n(X)-X}{t}\\
&=\lim_{n\to\infty}\frac{1}{n}\frac{\sum_{i=0}^{n-1}J_{t/n}^{\mu}\left(\left(J_{t/n}^{\mu}\right)^i(X)\right)-\left(J_{t/n}^{\mu}\right)^i(X)}{t/n}
\end{split}
\end{equation*}
and also
\begin{equation}\label{eq1:T:StrongSolution}
\frac{t}{n}\int_{\mathbb{P}}\log_{\left(J_{t/n}^{\mu}\right)^i(X)}Ad\mu(A)+\log_{\left(J_{t/n}^{\mu}\right)^i(X)}\left(J_{t/n}^{\mu}\right)^{i-1}(X)=0.
\end{equation}
Then assuming that $t>0$ is small enough, the estimates in Proposition~\ref{P:ResolventBound} imply that $d_\infty\left(\left(J_{t/n}^{\mu}\right)^i(X),\left(J_{t/n}^{\mu}\right)^{i-1}(X)\right)$ is arbitrarily small. Thus we use Lemma~\ref{L:ResolventAsymptotics} to expand the second term in \eqref{eq1:T:StrongSolution} and then sum up the resulting equations for $0\leq i\leq n-1$ to obtain
\begin{equation*}
\begin{split}
\frac{S(t)X-X}{t}&=\lim_{n\to\infty}\frac{1}{n}\sum_{i=0}^{n-1}\int_{\mathbb{P}}\log_{\left(J_{t/n}^{\mu}\right)^i(X)}Ad\mu(A)\\
&\quad+\left(J_{t/n}^{\mu}\right)^i(X)^{1/2}O\left(\frac{t}{n}\right)\left(J_{t/n}^{\mu}\right)^i(X)^{1/2},
\end{split}
\end{equation*}
which combined with Lemma~\ref{L:gradContinuous} and the second estimate of \eqref{eq:P:ResolventBound} proves the assertion.
\end{proof}

The following result established the uniqueness of the solution of the Karcher equation \eqref{eq:D:Karcher} for measures with bounded support. It can be used to prove the uniqueness of the solution of \eqref{eq:D:Karcher} for general elements with unbounded support in $\mathcal{P}^1(\mathbb{P})$.

\begin{theorem}[Theorem 6.13. \& Example 6.1. in \cite{palfia2}]\label{T:KarcherExist2}
Let $\mu\in\mathcal{P}^1(\mathbb{P})$ such that $\supp(\mu)$ is bounded. Then the Karcher equation \eqref{eq:D:Karcher} has a unique positive definite solution $\Lambda(\mu)$.
\end{theorem}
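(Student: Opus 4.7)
My plan is to prove uniqueness using the contractive resolvent $J^\mu_\lambda$ and the flow $S(t)$ developed earlier in this section; existence of a Karcher solution $\Lambda(\mu)$ is already supplied by Theorem~\ref{T:LambdaExists}.

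The argument has three steps. First, Proposition~\ref{P:ResolventContraction} shows $J^\mu_\lambda:\mathbb{P}\to\mathbb{P}$ is a strict $d_\infty$-contraction with factor $\frac{1}{1+\lambda}<1$; since $(\mathbb{P},d_\infty)$ is complete, the Banach fixed-point theorem produces a unique fixed point $X^*_\lambda$. Writing $X^*_\lambda=J^\mu_\lambda(X^*_\lambda)$ via \eqref{eq:D:resolvent} and cancelling $\log_{X^*_\lambda}X^*_\lambda=0$ gives $\int_{\mathbb{P}}\log_{X^*_\lambda}A\,d\mu(A)=0$, so $X^*_\lambda$ is a Karcher solution, is independent of $\lambda$ by Banach uniqueness, and coincides with $\Lambda(\mu)$. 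Second, the core step is to show that every Karcher solution $Y\in\mathbb{P}$ is fixed by the flow, i.e.\ $S(t)Y=Y$ for all $t\geq 0$. Third, applying the contraction \eqref{eq2:T:ExponentialFormula} to $Y$ and $\Lambda(\mu)$, both stationary under $S(t)$, forces
\[
d_\infty(Y,\Lambda(\mu))\leq e^{-t}d_\infty(Y,\Lambda(\mu))\qquad(t>0),
\]
hence $Y=\Lambda(\mu)$.

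The second step is the heart of the proof. The key local estimate combines Lemma~\ref{L:ResolventAsymptotics} with the Karcher equation for $J^\mu_\lambda(Y)$: from $\log_{J^\mu_\lambda(Y)}Y=-\lambda F(J^\mu_\lambda(Y))$, where $F(X):=\int_{\mathbb{P}}\log_X A\,d\mu(A)$, together with the expansion $\log_{J^\mu_\lambda(Y)}Y=Y-J^\mu_\lambda(Y)+O(\lambda^2)$, one obtains $\|Y-J^\mu_\lambda(Y)\|\leq\lambda\|F(J^\mu_\lambda(Y))\|+O(\lambda^2)$; since $F$ is $d_\infty$-to-norm continuous (Lemma~\ref{L:gradContinuous}) and $F(Y)=0$, this is $o(\lambda)$. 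To iterate, I bootstrap that the $X_k:=(J^\mu_{t/n})^k(Y)$ remain in a small $d_\infty$-ball of $Y$ for all $k\leq n$, so that $\|F(X_{k+1})\|$ stays uniformly small; the resulting telescoping bound $d_\infty(X_n,Y)\leq t\cdot o(1)+O(t^2/n)\to 0$ yields $S(t)Y=Y$ for small $t>0$, and the semigroup property of Theorem~\ref{T:ExponentialFormula} extends this to all $t\geq 0$. The main obstacle is exactly this bootstrap: the $o(\lambda)$ decay is valid a priori only at $Y$ itself, and one must control how far the iterates can escape a neighborhood of $Y$. This requires a uniform bound on $\int_{\mathbb{P}}d_\infty(X,A)\,d\mu(A)$ and on $\|X\|$ as $X$ varies in such a neighborhood, which is precisely what boundedness of $\supp(\mu)$ provides, making the constant $C$ in Lemma~\ref{L:ResolventAsymptotics} uniformly applicable throughout the iteration.
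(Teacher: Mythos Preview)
The paper does not prove this theorem; it is imported from \cite{palfia2}, so there is no in-paper argument to compare against. Your strategy---show that every solution of the Karcher equation is stationary for the semigroup $S(t)$ and then use the exponential contraction \eqref{eq2:T:ExponentialFormula} to force uniqueness---is a genuinely different route and, with one repair, works. Note that Step~1 is both unnecessary and partially unjustified: the claim that $X^*_\lambda$ is ``independent of $\lambda$ by Banach uniqueness'' does not follow, since different $\lambda$ give different contractions; and identifying $X^*_\lambda$ with $\Lambda(\mu)$ presupposes the very uniqueness you want. Drop Step~1 and rely on Theorem~\ref{T:LambdaExists} for existence.

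The gap is in Step~2. Your telescoping bound $d_\infty(X_n,Y)\le t\cdot o(1)+O(t^2/n)$ requires $\|F(X_{k+1})\|$ to be uniformly small for all $k\le n$, hence all $X_k$ to lie in a \emph{small} ball around $Y$. But the only a~priori bound available (Proposition~\ref{P:ResolventBound}) gives $d_\infty(X_k,Y)\le tC_0$, which is not small; the bootstrap as you state it is circular because the smallness of the ball is exactly the conclusion you need. The clean fix avoids telescoping entirely and uses Proposition~\ref{P:ResolventContraction} directly: with $a_k:=d_\infty(X_k,Y)$ and $\beta:=d_\infty(J^\mu_\lambda(Y),Y)$, the triangle inequality plus the resolvent contraction yield
\[
a_{k+1}\le \tfrac{1}{1+\lambda}\,a_k+\beta,\qquad a_0=0,
\]
hence $a_n\le \beta\,(1+\lambda)/\lambda$. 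From $\log_{J^\mu_\lambda(Y)}Y=-\lambda F(J^\mu_\lambda(Y))$ one gets $\beta\le \|J^\mu_\lambda(Y)^{-1}\|\cdot\lambda\,\|F(J^\mu_\lambda(Y))\|$, and since $J^\mu_\lambda(Y)\to Y$ and $F(Y)=0$ (Lemma~\ref{L:gradContinuous}), $\beta=o(\lambda)$ and $a_n\to 0$. This gives $S(t)Y=Y$ for every $t>0$ and every $\mu\in\mathcal{P}^1(\mathbb{P})$, with no bootstrap and no Lipschitz estimate on $F$; in particular the bounded-support hypothesis is not needed, so your argument in fact yields Theorem~\ref{T:L1KarcherUniqueness} directly. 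One caution on circularity: the paper's own proof of Proposition~\ref{P:ResolventBound} invokes $(J^\mu_\lambda)^{-1}$, whose surjectivity is tantamount to uniqueness; but the first estimate there follows independently from $d_\infty(J^\mu_\lambda(X),X)=d_\infty(\Lambda(\nu),\Lambda(\delta_X))\le W_1(\nu,\delta_X)$ via \eqref{eq:T:LambdaExists} and Proposition~\ref{P:W_1convex}, so no circularity arises.
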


The following convexity of the Wasserstein metric $W_1$ is well known, see for instance \cite{villani}. We provide its proof for completeness.

\begin{proposition}\label{P:W_1convex}
The $W_1$ distance is convex, that is for $\mu_1,\mu_2,\nu_1,\nu_2\in\mathcal{P}^1(\mathbb{P})$ and $t\in[0,1]$ we have
\begin{equation}\label{eq:P:W_1convex}
W_1((1-t)\mu_1+t\mu_2,(1-t)\nu_1+t\nu_2)\leq (1-t)W_1(\mu_1,\nu_1)+tW_1(\mu_2,\nu_2).
\end{equation}
\end{proposition}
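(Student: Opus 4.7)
The plan is to use the classical gluing/superposition argument: exhibit an explicit coupling of $(1-t)\mu_1+t\mu_2$ with $(1-t)\nu_1+t\nu_2$ built as a convex combination of near-optimal couplings for the pairs $(\mu_i,\nu_i)$, then read off the inequality from linearity of the integral. Concretely, for any $\varepsilon>0$ I would pick $\tau$-additive couplings $\gamma_i\in\Pi(\mu_i,\nu_i)$ with
\begin{equation*}
\int_{\mathbb{P}\times\mathbb{P}}d_\infty(A,B)\,d\gamma_i(A,B)\leq W_1(\mu_i,\nu_i)+\varepsilon,\qquad i=1,2,
\end{equation*}
which is possible by the definition of $W_1$ as an infimum (no appeal to existence of a minimizer is needed).

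Next I would set $\gamma:=(1-t)\gamma_1+t\gamma_2$ and verify the two things required to conclude $\gamma\in\Pi((1-t)\mu_1+t\mu_2,(1-t)\nu_1+t\nu_2)$: first, that $\gamma$ is $\tau$-additive, which is immediate because a finite convex combination of $\tau$-additive Borel probability measures on $\mathbb{P}\times\mathbb{P}$ is $\tau$-additive (the class of measures satisfying the $\tau$-additivity condition is closed under finite convex combinations, as one sees by testing against upward directed families of open sets); second, that the marginals of $\gamma$ are the claimed convex combinations, which follows from the linearity of the pushforward under the coordinate projections $\pi_1,\pi_2:\mathbb{P}\times\mathbb{P}\to\mathbb{P}$, namely $(\pi_j)_*\gamma=(1-t)(\pi_j)_*\gamma_1+t(\pi_j)_*\gamma_2$.

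Once $\gamma$ is recognized as an admissible coupling, the estimate is essentially a one-line linearity computation:
\begin{equation*}
\begin{split}
W_1\bigl((1-t)\mu_1+t\mu_2,(1-t)\nu_1+t\nu_2\bigr)&\leq\int_{\mathbb{P}\times\mathbb{P}}d_\infty(A,B)\,d\gamma(A,B)\\
&=(1-t)\int d_\infty\,d\gamma_1+t\int d_\infty\,d\gamma_2\\
&\leq(1-t)\bigl(W_1(\mu_1,\nu_1)+\varepsilon\bigr)+t\bigl(W_1(\mu_2,\nu_2)+\varepsilon\bigr),
\end{split}
\end{equation*}
and then letting $\varepsilon\to 0$ yields \eqref{eq:P:W_1convex}.

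The only nonroutine point is the verification that $\gamma$ lies in $\Pi(\cdot,\cdot)$ as the paper defines it, i.e.\ that $\tau$-additivity survives the convex combination; this is where the main obstacle, such as it is, hides, because on non-separable spaces one has to be careful to stay inside the class $\mathcal{P}^1$. Since $\tau$-additivity is preserved under finite convex sums and the first moment condition $\int d_\infty(x,A)\,d\gamma(A,\cdot)<\infty$ follows from the same condition on $\mu_i,\nu_i$ via the triangle inequality $d_\infty(A,B)\leq d_\infty(A,x)+d_\infty(x,B)$, this step goes through cleanly. All other ingredients (linearity of the integral, the infimum definition of $W_1$) are entirely standard.
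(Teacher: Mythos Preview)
Your argument is correct and follows essentially the same route as the paper: form the convex combination of couplings, observe it is an admissible coupling of the mixtures, and read off the bound by linearity. The only difference is cosmetic---you use $\varepsilon$-near-optimal couplings and let $\varepsilon\to 0$, whereas the paper works with arbitrary couplings and takes the infimum at the end---and you are more explicit about checking $\tau$-additivity of the combined coupling, a point the paper leaves implicit.
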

\begin{proof}
Let $\omega_1\in\Pi(\mu_1,\nu_1), \omega_2\in\Pi(\mu_2,\nu_2)$ where $\Pi(\mu,\nu)\subseteq\mathcal{P}(\mathbb{P}\times\mathbb{P})$ denote the set of all couplings of $\mu,\nu\in\mathcal{P}^1(\mathbb{P})$. Then $(1-t)\omega_1+t\omega_2\in\Pi((1-t)\mu_1+t\mu_2,(1-t)\nu_1+t\nu_2)$ and we have
\begin{equation*}
\begin{split}
W_1&((1-t)\mu_1+t\mu_2,(1-t)\nu_1+t\nu_2)\\
&=\inf_{\gamma\in\Pi((1-t)\mu_1+t\mu_2,(1-t)\nu_1+t\nu_2)}\int_{\mathbb{P}\times\mathbb{P}}d_\infty(A,B)d\gamma(A,B)\\
&\leq\int_{\mathbb{P}\times\mathbb{P}}d_\infty(A,B)d((1-t)\omega_1+t\omega_2)(A,B)\\
&=(1-t)\int_{\mathbb{P}\times\mathbb{P}}d_\infty(A,B)d\omega_1(A,B)+t\int_{\mathbb{P}\times\mathbb{P}}d_\infty(A,B)d\omega_2(A,B),
\end{split}
\end{equation*}
thus by taking infima in $\omega_1\in\Pi(\mu_1,\nu_1), \omega_2\in\Pi(\mu_2,\nu_2)$ \eqref{eq:P:W_1convex} follows.
\end{proof}

The following two results are proved in \cite{limpalfia3}, we provide their proofs here for completeness. The first one is also obtained very recently in \cite{lawson2} through approximation by finitely supported probability measures.

\begin{theorem}[cf. \cite{limpalfia3}]\label{T:L1KarcherUniqueness}
Assume $\mu\in\mathcal{P}^1(\mathbb{P})$. Then the Karcher equation \eqref{eq:D:Karcher} has a unique solution in $\mathbb{P}$.
\end{theorem}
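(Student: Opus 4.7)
The plan is to show that any solution $X\in\mathbb{P}$ of the Karcher equation \eqref{eq:D:Karcher} must coincide with $\Lambda(\mu)$, by exhibiting both as fixed points of the semigroup $S(t)$ of Theorem~\ref{T:ExponentialFormula} and collapsing them via the exponential contraction. Concretely, once I have $S(t)X=X$ and $S(t)\Lambda(\mu)=\Lambda(\mu)$ for every $t>0$, property \eqref{eq2:T:ExponentialFormula} yields
\[
d_\infty(X,\Lambda(\mu))=d_\infty(S(t)X,S(t)\Lambda(\mu))\leq e^{-t}d_\infty(X,\Lambda(\mu)),
\]
forcing $X=\Lambda(\mu)$; existence itself is Theorem~\ref{T:LambdaExists}.

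To establish $S(t)\Lambda(\mu)=\Lambda(\mu)$, I would argue by finite-support approximation. Let $\mu_n\to\mu$ in $W_1$ with $\mu_n$ finitely supported (Proposition~\ref{P:weakW1agree}) and set $Y_n:=\Lambda(\mu_n)$, which is the unique Karcher mean of $\mu_n$ by Theorem~\ref{T:KarcherExist}. A direct substitution shows that $Y_n$ also solves the Karcher equation for the finitely supported probability measure $\frac{\lambda}{\lambda+1}\mu_n+\frac{1}{\lambda+1}\delta_{Y_n}$, and by the uniqueness clause of Theorem~\ref{T:KarcherExist} this gives $J^{\mu_n}_\lambda(Y_n)=Y_n$. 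The combined measures converge in $W_1$ by Proposition~\ref{P:W_1convex}, so $W_1$-continuity of $\Lambda$ (Theorem~\ref{T:LambdaExists}) lets me pass to the limit and conclude $J^\mu_\lambda(\Lambda(\mu))=\Lambda(\mu)$; iterating and invoking \eqref{eq1:T:ExponentialFormula} then yields $S(t)\Lambda(\mu)=\Lambda(\mu)$.

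The remaining identity $S(t)X=X$ is the crux. Setting $F(Y):=\int_{\mathbb{P}}\log_Y A\,d\mu(A)$, the hypothesis is $F(X)=0$, so both $\gamma(t):=S(t)X$ and the constant curve $\tilde\gamma\equiv X$ are strong solutions of the Cauchy problem $\dot\gamma=F(\gamma)$, $\gamma(0)=X$ from Theorem~\ref{T:StrongSolution}. I would combine the asymptotic expansion of Lemma~\ref{L:ResolventAsymptotics} with the Karcher equation defining $J^\mu_{t/n}$ to obtain, for the iterates $X_i:=(J^\mu_{t/n})^i(X)$, the recursion $X_{i+1}-X_i=(t/n)F(X_{i+1})+O((t/n)^2)$ in operator norm. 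A local Lipschitz bound $\|F(Y)\|\leq L\cdot d_\infty(X,Y)$ on a neighborhood of $X$ then feeds a discrete Gronwall argument, giving $\|X_n-X\|\to 0$ and hence $S(t)X=\lim_n X_n=X$ by \eqref{eq1:T:ExponentialFormula}. The hard part will be this Lipschitz estimate: one must bound the Fr\'echet-differential of $Y\mapsto Y^{1/2}\log(Y^{-1/2}AY^{-1/2})Y^{1/2}$ near $X$ by a quantity of the form $c(1+d_\infty(X,A))$ using the series expansions of Lemma~\ref{L:SeriesExpansions}, after which $\mu$-integrability of the resulting Lipschitz constant follows from $\mu\in\mathcal{P}^1(\mathbb{P})$.
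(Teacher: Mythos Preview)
Your strategy is genuinely different from the paper's. The paper never touches the semigroup $S(t)$ here: given any solution $X$ of \eqref{eq:D:Karcher}, it manufactures for each $R>0$ a measure
\[
\mu_R:=\mu|_{B(X,R)}+\mu(\mathbb{P}\setminus B(X,R))\,\delta_{Z(R)},\qquad
Z(R):=X^{1/2}\exp\!\Bigl(\tfrac{1}{\mu(\mathbb{P}\setminus B(X,R))}\textstyle\int_{\mathbb{P}\setminus B(X,R)}\log(X^{-1/2}AX^{-1/2})\,d\mu(A)\Bigr)X^{1/2},
\]
so that $\mu_R$ has \emph{bounded} support and, by construction, $X$ still satisfies the Karcher equation for $\mu_R$. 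Theorem~\ref{T:KarcherExist2} then forces $X=\Lambda(\mu_R)$, and a short $W_1$-estimate gives $W_1(\mu_R,\mu)\to 0$, whence $X=\Lambda(\mu)$ by \eqref{eq:T:LambdaExists}. This reduction to the bounded-support case sidesteps any differentiability of $F(Y)=\int\log_Y A\,d\mu(A)$.

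Your argument for $J^\mu_\lambda(\Lambda(\mu))=\Lambda(\mu)$ via finitely supported approximants is correct and, in fact, is a cleaner route to Proposition~\ref{P:StationaryFlow} than the paper's (which invokes the very theorem under discussion). The gap is in the step $S(t)X=X$. To run the discrete Gronwall you need, for $Y$ near $X$, a bound of the type
\[
\|\log_Y A-\log_X A\|\leq C(X)\,(1+d_\infty(X,A))\,d_\infty(X,Y)
\]
with a constant integrable against $\mu$. The only Lipschitz estimate available in the paper, Lemma~\ref{L:LipschitzEntropy}, carries a factor $e^{d_\infty(I,A)}$ in front of $d_\infty(X,Y)$, and $A\mapsto e^{d_\infty(I,A)}$ is \emph{not} $\mu$-integrable for general $\mu\in\mathcal{P}^1(\mathbb{P})$; the exponential arises precisely from the crude bound $\|A^{1/2}\|\|A^{-1/2}\|\leq e^{d_\infty(I,A)}$ used there. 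Sharpening this to linear growth in $d_\infty(X,A)$ is plausible (and holds in commuting and simple $2\times2$ examples, where cancellations kill the exponential), but it is a genuinely new operator-theoretic inequality, not a consequence of Lemma~\ref{L:SeriesExpansions} or anything else in the paper. Until that estimate is established, ODE uniqueness for $\dot\gamma=F(\gamma)$ is not available, and without it you cannot conclude $S(t)X=X$; note also that the direct route ``$X$ solves the Karcher equation for $\frac{\lambda}{\lambda+1}\mu+\frac{1}{\lambda+1}\delta_X$, hence $J^\mu_\lambda(X)=X$'' is circular, since $J^\mu_\lambda(X)$ is defined as the \emph{specific} solution $\Lambda(\cdot)$ and equating it with $X$ is exactly the uniqueness statement you are proving.
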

\begin{proof}
Let $X\in\mathbb{P}$ be a solution of \eqref{eq:D:Karcher}, i.e.
\begin{equation*}
\int_{\mathbb{P}}\log_{X}Ad\mu(A)=0.
\end{equation*}
Let $B(X,R):=\{Y\in\mathbb{P}:d_\infty(Y,X)<R\}$. Then since $\int_{\mathbb{P}}d_\infty(X,A)d\mu(A)<+\infty$ from Proposition 23 of Chapter 4 in \cite{royden} it follows that
\begin{equation}\label{eq0:T:L1KarcherUniqueness2}
\lim_{R\to\infty}\int_{\mathbb{P}\setminus B(X,R)}d_\infty(X,A)d\mu(A)=0.
\end{equation}
For $R\in[0,\infty)$, if $\mu(\mathbb{P}\setminus B(X,R))>0$ define
\begin{equation*}
E(R):=\frac{1}{\mu(\mathbb{P}\setminus B(X,R))}\int_{\mathbb{P}\setminus B(X,R)}\log(X^{-1/2}AX^{-1/2})d\mu(A)
\end{equation*}
and $E(R):=0$ otherwise. Also define $Z(R):=X^{1/2}\exp(E(R))X^{1/2}$ and $\mu_R\in\mathcal{P}^1(\mathbb{P})$ by
\begin{equation*}
\mu_R:=\mu|_{B(X,R)}+\mu(\mathbb{P}\setminus B(X,R))\delta_{Z(R)}
\end{equation*}
where $\mu|_{B(X,R)}$ is the restriction of $\mu$ to $B(X,R)$. Note that $\mu_R$ has bounded support for any $R\in(0,\infty)$.

Next, we claim that $\lim_{R\to\infty}W_1(\mu_R,\mu)=0$. If $W_1(\mu_{R_0},\mu)=0$ for some $R_0>0$ then $W_1(\mu_R,\mu)=0$ for all $R\geq R_0$ and we are done, so assume $W_1(\mu_{R},\mu)\neq 0$. We have
\begin{equation*}
\begin{split}
W_1(\mu_R,\mu)&=W_1\left(\mu|_{B(X,R)}+\mu(\mathbb{P}\setminus B(X,R))\delta_{Z(R)},\right.\\
&\left.\quad\quad\quad\mu|_{B(X,R)}+\mu(\mathbb{P}\setminus B(X,R))\frac{1}{\mu(\mathbb{P}\setminus B(X,R))}\mu|_{\mathbb{P}\setminus B(X,R)}\right)\\
&\leq\mu(B(X,R))W_1\left(\frac{1}{\mu(B(X,R))}\mu|_{B(X,R)},\frac{1}{\mu(B(X,R))}\mu|_{B(X,R)}\right)\\
&\quad+\mu(\mathbb{P}\setminus B(X,R))W_1\left(\delta_{Z(R)},\frac{1}{\mu(\mathbb{P}\setminus B(X,R))}\mu|_{\mathbb{P}\setminus B(X,R)}\right)\\
&=\int_{\mathbb{P}\setminus B(X,R)}d_\infty(Z(R),A)d\mu(A)\\
&\leq\int_{\mathbb{P}\setminus B(X,R)}d_\infty(Z(R),X)+d_\infty(X,A)d\mu(A)\\
&=\int_{\mathbb{P}\setminus B(X,R)}\|E(R)\|d\mu(A)+\int_{\mathbb{P}\setminus B(X,R)}d_\infty(X,A)d\mu(A)\\
&=\left\|\int_{\mathbb{P}\setminus B(X,R)}\log(X^{-1/2}AX^{-1/2})d\mu(A)\right\|+\int_{\mathbb{P}\setminus B(X,R)}d_\infty(X,A)d\mu(A)\\
&\leq\int_{\mathbb{P}\setminus B(X,R)}\left\|\log(X^{-1/2}AX^{-1/2})\right\|d\mu(A)+\int_{\mathbb{P}\setminus B(X,R)}d_\infty(X,A)d\mu(A)\\
&=2\int_{\mathbb{P}\setminus B(X,R)}d_\infty(X,A)d\mu(A)
\end{split}
\end{equation*}
where to obtain the first inequality we used \eqref{eq:P:W_1convex}. This proves our claim by \eqref{eq0:T:L1KarcherUniqueness2}.

On one hand, since $\mu_R$ has bounded support for all $R\in(0,\infty)$ by Theorem~\ref{T:KarcherExist2} it follows that the Karcher equation
\begin{equation}\label{eq1:T:L1KarcherUniqueness2}
\int_{\mathbb{P}}\log_YAd\mu_R(A)=0
\end{equation}
has a unique solution in $\mathbb{P}$ and that must be $\Lambda(\mu_R)$ by Theorem~\ref{T:LambdaExists}. On the other hand, we have that by definition $X$ is also a solution of \eqref{eq1:T:L1KarcherUniqueness2}, thus $\Lambda(\mu_R)=X$ for all $R\in(0,\infty)$. Now by Proposition~\ref{P:weakW1agree} we choose a sequence of finitely supported probability measures $\mu_n\in\mathcal{P}^1(\mathbb{P})$ that is $W_1$-converging to $\mu$, so by Theorem~\ref{T:LambdaExists} $\Lambda(\mu_n)\to\Lambda(\mu)$. Then, by the claim $W_1(\mu_R,\mu_n)\to 0$ as $R,n\to \infty$, thus by the contraction property \eqref{eq:T:LambdaExists} $d_\infty(\Lambda(\mu_R),\Lambda(\mu_n))\to 0$, that is $d_\infty(X,\Lambda(\mu_n))\to 0$ and also $\Lambda(\mu_n)\to\Lambda(\mu)$ proving that $X=\Lambda(\mu)$, thus the uniqueness of the solution of \eqref{eq:D:Karcher}.
\end{proof}

\begin{proposition}[cf. \cite{limpalfia3}]\label{P:StationaryFlow}
Let $\mu\in\mathcal{P}^1(\mathbb{P})$. Then the semigroup $S(t)\Lambda(\mu)$ generated in Theorem~\ref{T:ExponentialFormula} is stationary, that is $S(t)\Lambda(\mu)=\Lambda(\mu)$ for all $t>0$.
\end{proposition}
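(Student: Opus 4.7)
The plan is to show that $\Lambda(\mu)$ is a fixed point of every resolvent operator $J_\lambda^\mu$, and then pass to the limit in the exponential formula of Theorem~\ref{T:ExponentialFormula}.

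Concretely, set $Z = \Lambda(\mu)$ and consider the defining Karcher equation for $J_\lambda^\mu(\Lambda(\mu))$, namely
\begin{equation*}
\frac{\lambda}{\lambda+1}\int_{\mathbb{P}}\log_{Y}A\,d\mu(A)+\frac{1}{\lambda+1}\log_{Y}\Lambda(\mu)=0,\qquad Y\in\mathbb{P}.
\end{equation*}
Plug in $Y=\Lambda(\mu)$. The first term vanishes since $\Lambda(\mu)$ solves $\int_{\mathbb{P}}\log_{\Lambda(\mu)}A\,d\mu(A)=0$ by Definition~\ref{D:KarcherMean} and Theorem~\ref{T:LambdaExists}, and the second term vanishes because $\log_{\Lambda(\mu)}\Lambda(\mu)=\Lambda(\mu)^{1/2}\log(I)\Lambda(\mu)^{1/2}=0$. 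Thus $Y=\Lambda(\mu)$ is a solution, and by the uniqueness result Theorem~\ref{T:L1KarcherUniqueness} applied to the measure $\tfrac{\lambda}{\lambda+1}\mu+\tfrac{1}{\lambda+1}\delta_{\Lambda(\mu)}\in\mathcal{P}^1(\mathbb{P})$, this is the unique solution, hence
\begin{equation*}
J_\lambda^\mu(\Lambda(\mu))=\Lambda(\mu)\qquad\text{for every }\lambda>0.
\end{equation*}

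Iterating gives $(J_{t/n}^\mu)^n(\Lambda(\mu))=\Lambda(\mu)$ for every $n\in\mathbb{N}$ and $t>0$. Taking $n\to\infty$ in the $d_\infty$-topology and invoking the exponential formula \eqref{eq1:T:ExponentialFormula} of Theorem~\ref{T:ExponentialFormula} yields
\begin{equation*}
S(t)\Lambda(\mu)=\lim_{n\to\infty}\bigl(J_{t/n}^\mu\bigr)^n(\Lambda(\mu))=\Lambda(\mu)
\end{equation*}
for every $t>0$, which is the claim.

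There is essentially no obstacle here: the only nontrivial ingredient is the uniqueness of solutions of the $L^1$-Karcher equation (Theorem~\ref{T:L1KarcherUniqueness}), which has already been established, and everything else is a direct verification. Note that for finitely supported $\mu$ one could instead invoke Theorem~\ref{T:KarcherExist} for uniqueness, but in the general $\mathcal{P}^1(\mathbb{P})$ setting Theorem~\ref{T:L1KarcherUniqueness} is exactly the tool needed to conclude $J_\lambda^\mu(\Lambda(\mu))=\Lambda(\mu)$.
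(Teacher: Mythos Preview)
Your proof is correct and essentially identical to the paper's own argument: both verify that $\Lambda(\mu)$ solves the Karcher equation defining $J_\lambda^\mu(\Lambda(\mu))$ and then invoke Theorem~\ref{T:L1KarcherUniqueness} for uniqueness to conclude $J_\lambda^\mu(\Lambda(\mu))=\Lambda(\mu)$. The paper simply states that this fixed-point property for the resolvent suffices, whereas you spell out the (immediate) passage to $S(t)$ via iteration and the exponential formula, but there is no substantive difference.
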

\begin{proof}
It is enough to show that $J^\mu_\lambda(\Lambda(\mu))=\Lambda(\mu)$ for any $\lambda>0$. Indeed by substitution $Z=\Lambda(\mu)$ is a solution of
\begin{equation*}
\frac{\lambda}{\lambda+1}\int_{\mathbb{P}}\log_{Z}Ad\mu(A)+\frac{1}{\lambda+1}\log_{Z}(\Lambda(\mu))=0
\end{equation*}
but this solution is unique by Theorem~\ref{T:L1KarcherUniqueness} and by definition \eqref{eq:D:resolvent} it is $J^\mu_\lambda(\Lambda(\mu))$.
\end{proof}

\begin{corollary}\label{C:ODECurvesConverge}
Let $\mu\in\mathcal{P}^1(\mathbb{P})$ and $X\in\mathbb{P}$. Then
\begin{equation}\label{eq:C:ODECurvesConverge}
d_\infty(S(t)X,\Lambda(\mu))\leq e^{-t}d_\infty(X,\Lambda(\mu))
\end{equation}
for all $t>0$.
\end{corollary}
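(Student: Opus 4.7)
The plan is to combine the two ingredients that have just been established in this section: the exponential contraction property \eqref{eq2:T:ExponentialFormula} of the semigroup $S(t)$ from Theorem~\ref{T:ExponentialFormula}, and the stationarity $S(t)\Lambda(\mu)=\Lambda(\mu)$ from Proposition~\ref{P:StationaryFlow}. The corollary is essentially a direct consequence: pick $Y=\Lambda(\mu)$ in \eqref{eq2:T:ExponentialFormula} and use stationarity to rewrite $S(t)Y$ as $\Lambda(\mu)$ itself.

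More precisely, I would argue in one line as follows. Fix $X\in\mathbb{P}$ and $t>0$. By Proposition~\ref{P:StationaryFlow} we have $S(t)\Lambda(\mu)=\Lambda(\mu)$, hence
\begin{equation*}
d_\infty(S(t)X,\Lambda(\mu))=d_\infty(S(t)X,S(t)\Lambda(\mu)).
\end{equation*}
Applying the contraction estimate \eqref{eq2:T:ExponentialFormula} of Theorem~\ref{T:ExponentialFormula} to the pair $(X,\Lambda(\mu))\in\mathbb{P}\times\mathbb{P}$ then yields
\begin{equation*}
d_\infty(S(t)X,S(t)\Lambda(\mu))\leq e^{-t}d_\infty(X,\Lambda(\mu)),
\end{equation*}
which is precisely \eqref{eq:C:ODECurvesConverge}.

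There is no real obstacle to overcome here; the entire analytic work has already been done. The contraction $e^{-t}$ rate was obtained from the resolvent contraction factor $(1+t/n)^{-n}\to e^{-t}$ and passed to the limit defining $S(t)$, while stationarity of $\Lambda(\mu)$ under the resolvent $J^\mu_\lambda$ was derived from the uniqueness theorem for the Karcher equation (Theorem~\ref{T:L1KarcherUniqueness}). The corollary simply packages these two facts to say that every ODE trajectory $t\mapsto S(t)X$ converges exponentially fast in the Thompson metric to the Karcher mean $\Lambda(\mu)$, which is the fixed point of the flow. This is the key dynamical input that will later allow the inductive-mean sequences $S_n$ in the nodice and strong-law theorems to be shown to track such an ODE trajectory and therefore converge to $\Lambda(\mu)$.
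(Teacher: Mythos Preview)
Your proof is correct and matches the paper's own argument essentially verbatim: invoke Proposition~\ref{P:StationaryFlow} to replace $\Lambda(\mu)$ by $S(t)\Lambda(\mu)$, then apply the contraction estimate \eqref{eq2:T:ExponentialFormula} with $Y=\Lambda(\mu)$.
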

\begin{proof}
By the previous Proposition~\label{P:StationaryFlow} we have that $S(t)\Lambda(\mu)=\Lambda(\mu)$ for all $t>0$. Thus, combined with \eqref{eq2:T:ExponentialFormula} we get
\begin{equation*}
d_\infty(S(t)X,\Lambda(\mu))=d_\infty(S(t)X,S(t)\Lambda(\mu))\leq e^{-t}d_\infty(X,\Lambda(\mu)).
\end{equation*}
\end{proof}

\section{Nodice theorem through resolvent iterations}

The first result of this section is about the convergence of resolvent iterations towards $\Lambda$. It can be viewed as a nonlinear proximal point algorithm.

\begin{proposition}\label{P:Proximal}
Let $\mu\in\mathcal{P}^1(\mathbb{P})$, $d\geq 0$ an integer and $X\in\mathbb{P}$. Let $X_0:=X$ and define $X_{k+1}:=J^\mu_{1/(k+d)}(X_k)$ for $k\in\mathbb{N}$. Then $d_\infty(X_k,\Lambda(\mu))\to 0$.
\end{proposition}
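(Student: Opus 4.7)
The plan is to exploit the fact that $\Lambda(\mu)$ is a fixed point of every resolvent $J^\mu_\lambda$ and then iterate the contraction estimate from Proposition~\ref{P:ResolventContraction}. Because the prescribed step sizes $\tau_{k+1}=1/(k+d)$ give a telescoping product $\prod (1+\tau_{k+1})^{-1} = O(1/n)$, this should produce convergence directly with an explicit $O(1/n)$ rate, without invoking any of the continuous-time flow machinery.

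The first step I would carry out is to verify $J^\mu_\lambda(\Lambda(\mu)) = \Lambda(\mu)$ for every $\lambda > 0$. By definition \eqref{eq:D:resolvent}, $J^\mu_\lambda(\Lambda(\mu))$ is determined by the Karcher equation
\[
\frac{\lambda}{\lambda+1}\int_{\mathbb{P}}\log_Z A\, d\mu(A) + \frac{1}{\lambda+1}\log_Z(\Lambda(\mu)) = 0,
\]
and substituting $Z = \Lambda(\mu)$ makes both terms vanish (the first by definition of $\Lambda(\mu)$, the second since $\log_Z Z = 0$). Uniqueness of this solution follows from Theorem~\ref{T:L1KarcherUniqueness} applied to $\tfrac{\lambda}{\lambda+1}\mu+\tfrac{1}{\lambda+1}\delta_{\Lambda(\mu)}\in\mathcal{P}^1(\mathbb{P})$. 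This is essentially the same observation used in the proof of Proposition~\ref{P:StationaryFlow}.

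Armed with this fixed-point identity, applying Proposition~\ref{P:ResolventContraction} to the pair $X_k,\Lambda(\mu)$ with $\lambda = 1/(k+d)$ yields
\[
d_\infty(X_{k+1}, \Lambda(\mu)) \leq \frac{k+d}{k+d+1}\, d_\infty(X_k, \Lambda(\mu)).
\]
Iterating from the smallest admissible index $k_0$ (where $k_0+d\geq 1$, so that the step size is well-defined) and using the telescoping
\[
\prod_{k=k_0}^{n-1}\frac{k+d}{k+d+1} = \frac{k_0+d}{n+d},
\]
I arrive at $d_\infty(X_n, \Lambda(\mu)) \leq \frac{k_0+d}{n+d}\, d_\infty(X_{k_0}, \Lambda(\mu))$, which tends to $0$ as $n \to \infty$.

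The proof has no substantive obstacle: everything reduces to a one-line telescoping estimate, contingent only on recognizing $\Lambda(\mu)$ as the unique fixed point of $J^\mu_\lambda$. The elaborate ODE and Kobayashi-type estimates developed earlier in the section are not required here, although in principle one could also argue by comparing $X_n$ to $S(t_n)X$ for $t_n=\sum_k \tau_k \to \infty$ via Theorem~\ref{T:KobayashiEst} and then invoking Corollary~\ref{C:ODECurvesConverge} — a substantially less efficient route with a weaker quantitative bound.
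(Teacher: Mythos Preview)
Your proof is correct and considerably more direct than the paper's. The paper takes precisely the route you sketch parenthetically at the end: it compares $X_n$ to the uniform-step iterate $(J^\mu_{t_n/n})^n(X)$ via the Kobayashi-type estimate of Theorem~\ref{T:KobayashiEst}, then passes to the continuous flow $S(t_n)X$ via \eqref{eq4:T:ExponentialFormula}, and finally invokes the exponential contraction \eqref{eq:C:ODECurvesConverge} to reach $\Lambda(\mu)$. Because $t_n=\sum_j 1/(j+d)$ grows only logarithmically, the dominant term $e^{-t_n}d_\infty(X,\Lambda(\mu))$ leaves the paper with the rate $d_\infty(X_n,\Lambda(\mu))=O(1/\log n)$.

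Your argument bypasses the flow entirely: once $\Lambda(\mu)$ is a fixed point of every $J^\mu_\lambda$ (Proposition~\ref{P:StationaryFlow}, already available at this point in the paper), iterating \eqref{eq:P:ResolventContraction} and telescoping gives the sharper $O(1/n)$ bound. The Kobayashi machinery is genuinely not needed for this proposition; it earns its keep in constructing $S(t)$ and in the error analysis of Theorem~\ref{T:Nodice}, but for the bare convergence statement here your two-line argument is both sufficient and quantitatively better.
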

\begin{proof}
We use the notation of Theorem~\ref{T:KobayashiEst}. Let $t_k:=\sum_{i=d+1}^k\frac{1}{i}$ and $\hat{t}_j:=j\frac{t_n}{n}$ for a fixed $n\in\mathbb{N}$ such that $\lfloor n/t_n\rfloor\geq d$, so that $\tau_j=\frac{1}{j+d}$ and $\hat{\tau}=\hat{\tau}_j=\frac{t_n}{n}$ for all $1\leq j\leq n$. Let $S(t)$ denote the semigroup generated by $J^\mu$ in Theorem~\ref{T:ExponentialFormula}. Then
\begin{equation*}
\begin{split}
d_\infty(X_n,\Lambda(\mu))&\leq d_\infty(X_n,\left(J^\mu_{\hat{\tau}}\right)^n(X))+d_\infty(\left(J^\mu_{\hat{\tau}}\right)^n(X),S(t_n)X)\\
&\quad+d_\infty(S(t_n)X,\Lambda(\mu))\\
&\leq \left(1+\frac{t_n}{n}\right)^{-\lfloor n/t_n\rfloor+d}\prod_{j=\lfloor n/t_n\rfloor-d}^n(1+\tau_j)^{-1}\left[\left(t_n-n\frac{t_n}{n}\right)^2\right.\\
&\quad\left.+\sum_{j=1}^n\left(\frac{1}{(j+d)^2}+\frac{t_n^2}{n^2}\right)\right]^{1/2}C+t_n\left(\frac{1}{n}\right)^{1/2}C+e^{-t_n}d_\infty(X,\Lambda(\mu))\\
&\leq \left(1+\frac{t_n}{n}\right)^{-\lfloor n/t_n\rfloor+d}\frac{\lfloor n/t_n\rfloor-d}{n+1}\sqrt{\frac{\pi^2}{6}+\frac{t_n^2}{n}}C+\frac{t_n}{n^{1/2}}C\\
&\quad+e^{-t_n}d_\infty(X,\Lambda(\mu))\\
&=O(1/\log(n))
\end{split}
\end{equation*}
where we used \eqref{eq:T:KobayashiEst}, \eqref{eq4:T:ExponentialFormula}, \eqref{eq2:T:ExponentialFormula} and \eqref{eq:C:ODECurvesConverge} to obtain the second inequality, then we used the formula $\sum_{j=1}^\infty\frac{1}{j^2}=\frac{\pi^2}{6}$ and the fact that $t_n=O(\log(n))$ to obtain the last two inequalities. The above bound proves the assertion.
\end{proof}

We need an elementary lemma from \cite[Lemma~3.4]{Bert1} for later use.

\begin{lemma}\label{L:2}
Let $a_k,b_k,c_k\geq 0$ be sequences such that $a_{k+1}\leq a_k-b_k+c_k$ for any $k \ge 1$,
and assume $\sum_{k=1}^{\infty}c_k<\infty$.
Then the sequence $a_k$ converges and also $\sum_{k=1}^{\infty}b_k<\infty$.
\end{lemma}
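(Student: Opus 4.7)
The plan is to introduce the auxiliary sequence
\begin{equation*}
d_k := a_k + \sum_{j=k}^{\infty} c_j,
\end{equation*}
which is well-defined and nonnegative because $\sum_{j=1}^\infty c_j < \infty$ and $a_k \geq 0$. The recursion $a_{k+1} \leq a_k - b_k + c_k$ then translates into a clean monotonicity statement for $d_k$: indeed,
\begin{equation*}
d_{k+1} = a_{k+1} + \sum_{j=k+1}^{\infty} c_j \leq a_k - b_k + c_k + \sum_{j=k+1}^{\infty} c_j = d_k - b_k \leq d_k,
\end{equation*}
since $b_k \geq 0$.

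From this I would conclude that $\{d_k\}$ is a monotone non-increasing sequence bounded below by $0$, hence convergent to some limit $L \geq 0$. Since the tail $\sum_{j=k}^{\infty} c_j \to 0$ as $k \to \infty$ by the convergence of $\sum c_j$, it follows that $a_k = d_k - \sum_{j=k}^\infty c_j \to L$ as well, giving the convergence of $\{a_k\}$.

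For the summability of $\{b_k\}$, I would telescope the inequality $d_{k+1} \leq d_k - b_k$ to obtain
\begin{equation*}
\sum_{k=1}^{N} b_k \leq d_1 - d_{N+1} \leq d_1 = a_1 + \sum_{j=1}^\infty c_j
\end{equation*}
for every $N \geq 1$. Letting $N \to \infty$ and using that the right-hand side is a finite constant independent of $N$, I conclude $\sum_{k=1}^\infty b_k < \infty$. There is no real obstacle here: the only mild care needed is to ensure the tails $\sum_{j \geq k} c_j$ are finite so the reshuffling that produces $d_k$ is legitimate, which is guaranteed by the hypothesis.
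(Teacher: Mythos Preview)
Your proof is correct and is in fact the standard argument for this elementary lemma. The paper itself does not supply a proof but merely cites \cite[Lemma~3.4]{Bert1}, so there is no approach to compare against; your auxiliary-sequence argument with $d_k = a_k + \sum_{j\geq k} c_j$ is exactly the conventional route and nothing more is needed.
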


Let us quote a lemma from \cite{Ne1}:
\begin{lemma}\label{L:convrate}
Let $a_k\geq 0$ be a sequence such that
\[ a_{k+1}\leq \left(1-\frac{\alpha}{k+1}\right)a_k+\frac{\beta}{(k+1)^2}, \]
where $\alpha,\beta>0$. Then
\[ a_{k}\leq
\begin{cases}
\frac{1}{(k+2)^{\alpha}}\left(a_0+\frac{2^\alpha\beta(2-\alpha)}{1-\alpha}\right) & \mbox{if }0<\alpha<1;\\
\frac{\beta(1+\log(k+1))}{k+1} & \mbox{if }\alpha=1;\\
\frac{1}{(\alpha-1)(k+2)}\left(\beta+\frac{(\alpha-1)a_0-\beta}{(k+2)^{\alpha-1}}\right) & \mbox{if }\alpha>1.
\end{cases} \]
\end{lemma}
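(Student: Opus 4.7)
The plan is to unroll the linear recursion. Setting $\pi_{i,k}:=\prod_{j=i}^{k-1}\bigl(1-\alpha/(j+2)\bigr)$ with the convention $\pi_{k,k}=1$, iteration of the hypothesis gives
\[
a_k \;\le\; \pi_{0,k}\,a_0 \;+\; \beta \sum_{i=0}^{k-1}\frac{\pi_{i+1,k}}{(i+2)^2}.
\]
The plan is then to pin down the polynomial decay of $\pi_{0,k}$ and estimate the weighted sum in each of the three regimes.

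For the product I would telescope: $\pi_{0,k}=\Gamma(k+2-\alpha)/[\Gamma(2-\alpha)\,\Gamma(k+2)]$ (and directly $\pi_{0,k}=1/(k+1)$ when $\alpha=1$), then invoke the standard ratio asymptotic $\Gamma(n+a)/\Gamma(n+b)\sim n^{a-b}$ to conclude that $\pi_{0,k}\asymp (k+2)^{-\alpha}$ with explicit, elementary constants. By the same identity applied to $\pi_{i+1,k}$, one obtains $\pi_{i+1,k}/(i+2)^2 \asymp (i+2)^{\alpha-2}(k+2)^{-\alpha}$, and the three cases split according to whether $\sum_i(i+2)^{\alpha-2}$ converges, diverges, or sits on the boundary. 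When $0<\alpha<1$ the tail is summable and bounded by $\int_2^\infty x^{\alpha-2}\,dx=2^{\alpha-1}(1-\alpha)^{-1}$, which after multiplication by $\beta(k+2)^{-\alpha}$ yields the first bound with prefactor $2^{\alpha}\beta(2-\alpha)/(1-\alpha)$. When $\alpha=1$ the sum reduces to a harmonic number $\le 1+\log(k+1)$, combining with $\pi_{0,k}=1/(k+1)$ to give the logarithmic rate. When $\alpha>1$ the sum grows like $(k+2)^{\alpha-1}/(\alpha-1)$, which after the $(k+2)^{-\alpha}$ weighting produces the dominant $\beta/[(\alpha-1)(k+2)]$ term, while the $\pi_{0,k}a_0$ contribution supplies the lower-order correction of size $(k+2)^{-\alpha}$ with the signed coefficient $(\alpha-1)a_0-\beta$ appearing in the statement.

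In practice the cleanest road is direct induction on $k$ using each ansatz $\phi(k)$ as the inductive hypothesis: one substitutes $a_k\le\phi(k)$ into the right-hand side of the given recursion and verifies that $(1-\alpha/(k+1))\phi(k)+\beta/(k+1)^2\le \phi(k+1)$, which reduces to an elementary scalar inequality in each case. The main obstacle, and the reason the three constants come out exactly as stated, is the borderline regime $\alpha=1$: a purely polynomial ansatz does not close the induction, so one must carry the logarithmic factor through every step and arrange that the residual $\beta/(k+1)^2$ exactly absorbs the defect $\phi(k+1)-(1-1/(k+1))\phi(k)$. For $\alpha\neq 1$ the Gamma-function asymptotics match the ansatz to leading order, and only the tail constant needs calibration so that the induction goes through from the base case $k=0$ (which is where the $a_0$-dependent initialization terms in the two non-critical cases enter).
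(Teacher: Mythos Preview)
The paper does not prove this lemma at all: it is introduced with ``Let us quote a lemma from \cite{Ne1}'' and stated without proof. So there is no proof in the paper to compare your proposal against. Your approach---unroll the recursion, identify the product via a Gamma ratio, and split the resulting sum according to the exponent $\alpha-2$, or alternatively induct directly on the closed-form ansatz---is the standard route to such estimates and is perfectly reasonable.

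One technical caveat worth flagging: your product $\pi_{0,k}=\prod_{j=0}^{k-1}\bigl(1-\alpha/(j+2)\bigr)$ is shifted by one relative to the recursion as written. Iterating $a_{k+1}\le\bigl(1-\alpha/(k+1)\bigr)a_k+\beta/(k+1)^2$ from $k=0$ produces $\prod_{j=1}^{k}(1-\alpha/j)$, whose first factor $1-\alpha$ is non-positive for $\alpha\ge 1$; this breaks both the Gamma identity and your telescoping computation $\pi_{0,k}=1/(k+1)$ in the critical case. In practice one either assumes the recursion starts at some $k$ with $k+1>\alpha$, or absorbs the first few steps into the constant; the rates are unaffected, but matching the exact constants displayed (particularly the $a_0$-dependent pieces) requires handling this carefully, and the precise form in the statement likely reflects the indexing convention of the original reference.
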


The next two lemmas gives us technical tools to control error estimates occurring in resolvent iterations. The first one is a Lipschitz estimate for the relative operator entropy.

\begin{lemma}\label{L:LipschitzEntropy}
Let $A,X,Y\in\mathbb{P}$. Then
\begin{equation}\label{eq:L:LipschitzEntropy}
\|\log_XA-\log_YA\|\leq O(d_\infty(X,Y))\left(e^{d_\infty(I,Y)+d_\infty(I,A)}+d_\infty(X,A)e^{2d_\infty(I,X)}\right)
\end{equation}
\end{lemma}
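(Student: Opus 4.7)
The plan is to split $\log_X A - \log_Y A$ using the product-rule identity $\log_X A = X\log(X^{-1}A)$, bound each piece separately, and exploit the inversion symmetry $\log(M^{-1}) = -\log M$ (valid for any operator $M$ with positive spectrum) to reduce the difference of two non-self-adjoint logarithms to one involving only genuinely positive operators, where the EMI estimate~\eqref{eq:EMI} applies.

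Concretely, write
\begin{equation*}
\log_X A - \log_Y A = (X-Y)\log(X^{-1}A) + Y\bigl[\log(X^{-1}A) - \log(Y^{-1}A)\bigr].
\end{equation*}
For the first summand, the Loewner sandwich~\eqref{eq:order} gives $\|X^{-1/2}YX^{-1/2} - I\| \leq e^{d_\infty(X,Y)}-1$, whence $\|X-Y\| \leq \|X\|(e^{d_\infty(X,Y)}-1) = O(d_\infty(X,Y))e^{d_\infty(I,X)}$. Since $X^{-1}A$ is similar to the positive operator $X^{-1/2}AX^{-1/2}$ through conjugation by $X^{\pm 1/2}$, holomorphic functional calculus yields $\log(X^{-1}A) = X^{-1/2}\log(X^{-1/2}AX^{-1/2})X^{1/2}$, and hence $\|\log(X^{-1}A)\| \leq e^{d_\infty(I,X)}d_\infty(X,A)$. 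Multiplying produces $O(d_\infty(X,Y))\,d_\infty(X,A)\,e^{2d_\infty(I,X)}$, matching the second summand of the target bound.

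For the second summand, the key trick is $\log(X^{-1}A) = -\log((X^{-1}A)^{-1}) = -\log(A^{-1}X)$, and likewise for $Y$, so
\begin{equation*}
\log(X^{-1}A) - \log(Y^{-1}A) = \log(A^{-1}Y) - \log(A^{-1}X) = A^{-1/2}\bigl[\log(A^{-1/2}YA^{-1/2}) - \log(A^{-1/2}XA^{-1/2})\bigr]A^{1/2},
\end{equation*}
the last equality again by holomorphic functional calculus applied to the similarity through $A^{\pm 1/2}$. The bracketed difference now involves the \emph{bona fide} positive operators $A^{-1/2}XA^{-1/2}$ and $A^{-1/2}YA^{-1/2}$, so EMI~\eqref{eq:EMI} bounds it by $d_\infty(A^{-1/2}YA^{-1/2}, A^{-1/2}XA^{-1/2})$, and the congruence-invariance~\eqref{eq:invariance} identifies this with $d_\infty(X,Y)$. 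Combining with $\|Y\| \leq e^{d_\infty(I,Y)}$ and $\|A^{1/2}\|\,\|A^{-1/2}\| \leq e^{d_\infty(I,A)}$ yields the remaining summand $d_\infty(X,Y)\,e^{d_\infty(I,Y)+d_\infty(I,A)}$.

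The main obstacle, had one not spotted the inversion identity, would be estimating $\log M - \log N$ for the non-self-adjoint operators $M = X^{-1}A$ and $N = Y^{-1}A$; EMI and operator-monotonicity arguments do not directly apply, and integral representations for $\log$ generate resolvent factors $(M_s+r)^{-1}$ of non-self-adjoint operators whose norms are awkward to control sharply. Inversion followed by conjugation by $A^{\pm 1/2}$ sidesteps this entirely and delivers exactly the factor $e^{d_\infty(I,Y)+d_\infty(I,A)}$ stipulated by the lemma; every remaining step is a routine application of the spectral sandwich~\eqref{eq:order}.
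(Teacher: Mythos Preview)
Your proof is correct and follows essentially the same route as the paper's: the identical product-rule splitting $\log_X A - \log_Y A = (X-Y)\log(X^{-1}A) + Y[\log(X^{-1}A)-\log(Y^{-1}A)]$, the same bound on the first summand via $\|X-Y\|\leq\|X\|(e^{d_\infty(X,Y)}-1)$ and the $X^{\pm 1/2}$-conjugation of $\log(X^{-1}A)$, and the same treatment of the second summand by conjugating with $A^{\pm 1/2}$ to land on positive operators and then invoking EMI together with the congruence-invariance of $d_\infty$. The only cosmetic difference is that the paper conjugates directly to $A^{1/2}X^{-1}A^{1/2}$ while you first apply $\log(M^{-1})=-\log M$ and arrive at its inverse $A^{-1/2}XA^{-1/2}$; since $\log$ of an inverse is minus $\log$, the two expressions in the bracket coincide and the subsequent EMI/invariance step is literally the same.
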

\begin{proof}
By the continuous functional calculus we have that $$\log_XA=X^{1/2}\log(X^{-1/2}AX^{-1/2})X^{1/2}=X\log(X^{-1}A),$$ thus we have
\begin{equation*}
\begin{split}
\|\log_XA-&\log_YA\|=\|X\log(X^{-1}A)-Y\log(Y^{-1}A)\|\\
\leq &\|(X-Y)\log(X^{-1}A)\|+\|Y\left[\log(X^{-1}A)-\log(Y^{-1}A)\right]\|\\
\leq &\|(X-Y)\|\|X^{-1/2}\|\|\log(X^{-1/2}AX^{-1/2})\|\|X^{1/2}\|\\
&+\|Y\|\|A^{-1/2}\|\|\log(A^{1/2}X^{-1}A^{1/2})-\log(A^{1/2}Y^{-1}A^{1/2})\|\|A^{1/2}\|.
\end{split}
\end{equation*}
Then using \eqref{eq:order}, \eqref{eq:invariance}, \eqref{eq:EMI}, that $\|X^{\pm 1}\|\leq e^{d_\infty(I,X)}$ and $\|X^{1/2}\|=\|X\|^{1/2}$, we estimate the above further as
\begin{equation*}
\begin{split}
\|\log_XA-&\log_YA\|\\
\leq &\|X\|\|X^{-1/2}YX^{-1/2}-I\|\|X^{-1/2}\|\|X^{1/2}\|d_\infty(X,A)\\
&+\|Y\|\|A^{-1/2}\|d_\infty(A^{1/2}X^{-1}A^{1/2},A^{1/2}Y^{-1}A^{1/2})\|A^{1/2}\|\\
\leq &d_\infty(X,A)e^{2d_\infty(I,X)}(e^{d_\infty(X,Y)}-1)+e^{d_\infty(I,Y)+d_\infty(I,A)}d_\infty(X,Y).
\end{split}
\end{equation*}
Here we also used that $\|X^{-1/2}YX^{-1/2}-I\|\leq \max\{e^{d_\infty(X,Y)}-1,1-e^{-d_\infty(X,Y)}\}$ by \eqref{eq:order} and then that $e^{d_\infty(X,Y)}-1\geq 1-e^{-d_\infty(X,Y)}$, since $(e^{d_\infty(X,Y)}-1)^2\geq 0$. Now from the above, the assertion follows from $e^{d_\infty(X,Y)}-1\leq O(d_\infty(X,Y))$.
\end{proof}

\begin{lemma}\label{L:resolventAppr}
Let $X,Y\in\mathbb{P}$ and $E=E^*\in\mathcal{B}(\mathcal{H})$. Then there exists $\hat{X}\in\mathbb{P}$ such that
\begin{equation*}
X-Y+E=\log_Y\hat{X},
\end{equation*}
and we also have that
\begin{equation}\label{eq:L:resolventAppr}
\begin{split}
d_\infty(\hat{X},X)&\leq O\left(\left\|X^{-1/2}EX^{-1/2}\right\|\right)+O\left(e^{2d_\infty(X,Y)}\right)\times\\
&\times \left[O(d_\infty^2(X,Y))+O\left(\|Y^{-1/2}EY^{-1/2}\|^2\right)\right.\\
&\left.\quad+O\left(d_\infty(X,Y)\right)O\left(\|Y^{-1/2}EY^{-1/2}\|\right)\right]
\end{split}
\end{equation}
for small enough $d_\infty(X,Y),\left\|X^{-1/2}EX^{-1/2}\right\|,\|Y^{-1/2}EY^{-1/2}\|\geq 0$.
\end{lemma}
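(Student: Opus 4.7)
The plan is to solve for $\hat{X}$ explicitly by inverting $\log_Y$ on the right-hand side, and then to estimate the $d_\infty$ distance by Taylor expansion. Since $E=E^*$ and $Y\in\mathbb{P}$, the operator $Y^{-1/2}(X-Y+E)Y^{-1/2}$ is self-adjoint, so its exponential lies in $\mathbb{P}$, and I would set
\[
\hat{X}:=Y^{1/2}\exp\bigl(Y^{-1/2}(X-Y+E)Y^{-1/2}\bigr)Y^{1/2}\in\mathbb{P}.
\]
The continuous functional calculus and injectivity of $\exp$ on self-adjoint operators ensure that this $\hat{X}$ is the unique positive definite solution of $\log_Y\hat{X}=X-Y+E$.

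For the quantitative estimate, I would abbreviate $V:=Y^{-1/2}XY^{-1/2}$ and $F:=Y^{-1/2}EY^{-1/2}$, so that $\hat{X}=Y^{1/2}\exp(V-I+F)Y^{1/2}$ and $X=Y^{1/2}VY^{1/2}$. From \eqref{eq:order} we get $\|V-I\|\leq e^{d_\infty(X,Y)}-1=O(d_\infty(X,Y))$ for small $d_\infty(X,Y)$. Applying \eqref{eq:L:SeriesExpansions1} to the exponential yields
\[
\exp(V-I+F)=V+F+R_1,\qquad \|R_1\|\leq \tfrac{1}{2}\|V-I+F\|^2 e^{\|V-I+F\|},
\]
and the linear-order cancellation produces the clean identity
\[
\hat{X}-X=E+Y^{1/2}R_1Y^{1/2}.
\]

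Next I would sandwich by $X^{-1/2}$. Setting $M:=X^{-1/2}(\hat{X}-X)X^{-1/2}$ and using $\|X^{-1/2}Y^{1/2}\|^{2}=\|X^{-1/2}YX^{-1/2}\|\leq e^{d_\infty(X,Y)}$ from \eqref{eq:order}, this yields
\[
\|M\|\leq \|X^{-1/2}EX^{-1/2}\|+e^{d_\infty(X,Y)}\|R_1\|.
\]
Expanding $\|V-I+F\|^2\leq(\|V-I\|+\|F\|)^2$ and distributing gives $O(d_\infty^2(X,Y))+O(d_\infty(X,Y))\|F\|+O(\|F\|^2)$, and absorbing the remaining exponentials $e^{d_\infty(X,Y)}\cdot e^{\|V-I+F\|}$ uniformly into $O(e^{2d_\infty(X,Y)})$ in the small-parameter regime reproduces the bracket on the right-hand side of \eqref{eq:L:resolventAppr}. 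Finally, for $\|M\|<1$, the estimate \eqref{eq:L:SeriesExpansions2} gives
\[
d_\infty(\hat{X},X)=\|\log(I+M)\|\leq \frac{\|M\|}{1-\|M\|}=O(\|M\|),
\]
and combining with the bound on $\|M\|$ completes the proof of \eqref{eq:L:resolventAppr}.

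The hard part is bookkeeping rather than any deep obstacle: because $X$, $Y$ and $E$ do not commute, the linear term $F$ of $\exp(V-I+F)$ is not simply $X^{-1/2}EX^{-1/2}$ in the $X$-basis, so the leading term of size $O(\|X^{-1/2}EX^{-1/2}\|)$ only emerges after conjugating by $X^{-1/2}$. Moreover each similarity transformation between the $X$- and $Y$-bases contributes a factor bounded by $e^{d_\infty(X,Y)}$ via \eqref{eq:order}, which combined with the $e^{\|V-I+F\|}$ coming from the $\exp$ remainder is exactly what forces the prefactor $e^{2d_\infty(X,Y)}$ on the quadratic error terms in \eqref{eq:L:resolventAppr}.
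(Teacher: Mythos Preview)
Your proposal is correct and follows essentially the same route as the paper: define $\hat{X}=Y^{1/2}\exp(Y^{-1/2}XY^{-1/2}-I+Y^{-1/2}EY^{-1/2})Y^{1/2}$, apply the Taylor bound \eqref{eq:L:SeriesExpansions1} for $\exp$ to isolate the linear part $V+F$ (yielding $\hat{X}-X=E+Y^{1/2}R_1Y^{1/2}$), conjugate by $X^{-1/2}$ and control the similarity factors by $\|X^{-1/2}Y^{1/2}\|^2\le e^{d_\infty(X,Y)}$ from \eqref{eq:order}, and finally convert the bound on $\|X^{-1/2}\hat X X^{-1/2}-I\|$ into a $d_\infty$ bound via \eqref{eq:L:SeriesExpansions2}. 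The only cosmetic difference is your cleaner bookkeeping with the abbreviations $V,F,R_1,M$; the paper carries the full expressions throughout but the estimates and the handling of the $e^{2d_\infty(X,Y)}$ prefactor are the same.
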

\begin{proof}
First of all, notice that
\begin{equation}\label{eq1:L:resolventAppr}
\|X^{-1/2}Y^{1/2}\|^2=\sup_{v\in \mathcal{H},|v|=1}v^*Y^{1/2}X^{-1}Y^{1/2}v\leq e^{d_\infty(X,Y)}
\end{equation}
and
\begin{equation*}
e^{-d_\infty(X,Y)}-I\leq Y^{-1/2}XY^{-1/2}-I\leq e^{d_\infty(X,Y)}-I,
\end{equation*}
thus
\begin{equation}\label{eq2:L:resolventAppr}
\|Y^{-1/2}XY^{-1/2}-I\|\leq e^{d_\infty(X,Y)}-1\leq O(d_\infty(X,Y)),
\end{equation}
since $e^{d_\infty(X,Y)}-1\geq 1-e^{-d_\infty(X,Y)}$.
Now, from the assumption we get that
\begin{equation*}
\begin{split}
Y^{-1/2}XY^{-1/2}-I+Y^{-1/2}EY^{-1/2}&=\log(Y^{-1/2}\hat{X}Y^{-1/2})\\
\exp\left(Y^{-1/2}XY^{-1/2}-I+Y^{-1/2}EY^{-1/2}\right)&=Y^{-1/2}\hat{X}Y^{-1/2}\\
X^{-1/2}Y^{1/2}\exp\left(Y^{-1/2}XY^{-1/2}-I+Y^{-1/2}EY^{-1/2}\right)\times&\\
\times Y^{1/2}X^{-1/2}&=X^{-1/2}\hat{X}X^{-1/2}
\end{split}
\end{equation*}
establishing the existence of $\hat{X}\in\mathbb{P}$. By considering the Taylor expansion \eqref{eq:L:SeriesExpansions1}, we continue as
\begin{equation*}
\begin{split}
X^{-1/2}&\hat{X}X^{-1/2}=I+X^{-1/2}EX^{-1/2}\\
&+X^{-1/2}Y^{1/2}O\left(\|Y^{-1/2}XY^{-1/2}-I+Y^{-1/2}EY^{-1/2}\|^2\right)Y^{1/2}X^{-1/2},
\end{split}
\end{equation*}
so it follows, using the power series expansion \eqref{eq:L:SeriesExpansions2}, that
\begin{equation*}
\begin{split}
d_\infty(\hat{X},X)&=\|\log(X^{-1/2}\hat{X}X^{-1/2})\|\\
&\leq O\left[\left\|X^{-1/2}EX^{-1/2}+X^{-1/2}Y^{1/2}\times\right.\right.\\
&\left.\left.\quad\times O\left(\|Y^{-1/2}XY^{-1/2}-I+Y^{-1/2}EY^{-1/2}\|^2\right)Y^{1/2}X^{-1/2}\right\|\right]\\
&\leq O\left[\left\|X^{-1/2}EX^{-1/2}\right\|+\left\|X^{-1/2}Y^{1/2}\right\|^2\times\right.\\
&\left.\quad\times O\left(\|Y^{-1/2}XY^{-1/2}-I+Y^{-1/2}EY^{-1/2}\|^2\right)\right]\\
&\leq O\left(\left\|X^{-1/2}EX^{-1/2}\right\|\right)+O\left(\left\|X^{-1/2}Y^{1/2}\right\|^2\right)\times\\
&\quad\times O\left(\|Y^{-1/2}XY^{-1/2}-I+Y^{-1/2}EY^{-1/2}\|^2\right)\\
&\leq O\left(\left\|X^{-1/2}EX^{-1/2}\right\|\right)+O\left(\left\|X^{-1/2}Y^{1/2}\right\|^2\right)\times\\
&\quad\times O\left(\|Y^{-1/2}XY^{-1/2}-I\|^2+\|Y^{-1/2}EY^{-1/2}\|^2\right.\\
&\quad\left.+2\|Y^{-1/2}XY^{-1/2}-I\|\|Y^{-1/2}EY^{-1/2}\|\right)
\end{split}
\end{equation*}
Now using \eqref{eq1:L:resolventAppr} and \eqref{eq2:L:resolventAppr}, we get from the above that
\begin{equation*}
\begin{split}
&d_\infty(\hat{X},X)\leq O\left(\left\|X^{-1/2}EX^{-1/2}\right\|\right)+O\left(e^{2d_\infty(X,Y)}\right)\times\\
&\times \left[O(d_\infty^2(X,Y))+O\left(\|Y^{-1/2}EY^{-1/2}\|^2\right)+O\left(d_\infty(X,Y)\|Y^{-1/2}EY^{-1/2}\|\right)\right].
\end{split}
\end{equation*}
From here the assertion follows.
\end{proof}

Now we are in position to prove Holbrook's nodice theorem for the Karcher mean. Its proof gives a new proof of the result in the matrix case given by Holbrook \cite{Hol}.

\begin{theorem}[Nodice]\label{T:Nodice}
Let $\frac{1}{k}\sum_{i=1}^{k}\delta_{A_i}=:\mu\in\mathcal{P}^1(\mathbb{P})$ for a fixed integer $k$. Let $S_1:=A_1$ and $S_{n+1}:=S_n\#_{\frac{1}{n+1}}A_{\overline{n+1}}$, where $\overline{n}$ is defined to equal the residual of $n \mod k$ and the $0$ residual is identified with $k$. Then $S_k\to \Lambda(\mu)$ in $d_\infty$.
\end{theorem}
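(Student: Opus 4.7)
I would coarse-grain the deterministic trajectory by passing to the subsequence $T_m := S_{mk}$ corresponding to one full cycle through $A_1,\dots,A_k$, and show that one such cycle is a $d_\infty$-approximation, up to $O(1/m^2)$, of one resolvent step $J^{\mu}_{1/m}$. This remainder is summable enough that $T_m$ shadows the proximal point iteration of Proposition~\ref{P:Proximal}, which converges to $\Lambda(\mu)$. A final short argument extends convergence from $T_m$ to every $S_n$.

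\textbf{Step 1 (a priori bounds).} The convexity estimate $d_\infty(A\#_tB,C)\le (1-t)d_\infty(A,C)+td_\infty(B,C)$, which is a special case of Proposition~\ref{P:KarcherW1contracts} via \eqref{eq:T:KarcherExist}, shows inductively that $d_\infty(S_n,\Lambda(\mu))\le \max_{1\le i\le k} d_\infty(A_i,\Lambda(\mu))$ for every $n$. Hence $\|S_n^{\pm 1}\|$ and $\|A_i^{\pm 1}\|$ are uniformly bounded along the entire trajectory, which is exactly what is needed to turn the abstract $O(\cdot)$-constants in Lemmas~\ref{L:LipschitzEntropy}, \ref{L:resolventAppr}, and \ref{L:ResolventAsymptotics} into uniform-in-$m$ constants.

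\textbf{Step 2 (one-cycle expansion).} Writing $S_{n+1}=S_n\#_{1/(n+1)}A_{\overline{n+1}}$, the defining equation of the weighted geometric mean reads
\[
\tfrac{n}{n+1}\log_{S_{n+1}}S_n + \tfrac{1}{n+1}\log_{S_{n+1}}A_{\overline{n+1}}=0,
\]
and Lemma~\ref{L:resolventAppr} (with $X=S_n$, $Y=S_{n+1}$, $E=\tfrac{1}{n+1}\log_{S_n}A_{\overline{n+1}}$) together with the expansion \eqref{eq:L:SeriesExpansions2} gives
\[
S_{n+1}-S_n=\tfrac{1}{n+1}\log_{S_n}A_{\overline{n+1}} + R_n,\qquad \|R_n\|=O(1/n^2).
\]
Summing over one cycle $n=mk,mk+1,\dots,(m+1)k-1$, and applying Lemma~\ref{L:LipschitzEntropy} together with Step~1 to replace each base point $S_{mk+j}$ by $T_m$ at a uniform cost $O(1/m)$ per term, the telescoping sum yields
\[
T_{m+1}-T_m=\tfrac{1}{m}\int_{\mathbb{P}}\log_{T_m}A\,d\mu(A)+O(1/m^2).
\]

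\textbf{Step 3 (comparison with a resolvent step).} Lemma~\ref{L:ResolventAsymptotics}, applied with $\lambda=1/m$ and $X=T_m$, gives
\[
J^{\mu}_{1/m}(T_m)-T_m=\tfrac{1}{m}\int_{\mathbb{P}}\log_{T_m}A\,d\mu(A)+O(1/m^2),
\]
so subtracting and passing from operator norm to $d_\infty$ via \eqref{eq:EMI} and the Step~1 bound gives $d_\infty\!\bigl(T_{m+1},J^{\mu}_{1/m}(T_m)\bigr)=O(1/m^2)$.

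\textbf{Step 4 (shadowing the proximal iteration and conclusion).} Set $P_0:=T_0$ and $P_{m+1}:=J^{\mu}_{1/(m+1)}(P_m)$; by Proposition~\ref{P:ResolventContraction} and the triangle inequality,
\[
d_\infty(T_{m+1},P_{m+1})\le \tfrac{m}{m+1}\,d_\infty(T_m,P_m)+O(1/m^2).
\]
Applying Lemma~\ref{L:convrate} with $\alpha=1$ produces $d_\infty(T_m,P_m)=O(\log m/m)\to 0$, while Proposition~\ref{P:Proximal} guarantees $d_\infty(P_m,\Lambda(\mu))\to 0$. Hence $T_m\to\Lambda(\mu)$. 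For intermediate indices, the single-step estimate $d_\infty(S_{n+1},S_n)\le \tfrac{1}{n+1}d_\infty(S_n,A_{\overline{n+1}})$ and Step~1 give $d_\infty(S_{mk+j},T_m)=O(1/m)$ for $0\le j<k$, so the full sequence $S_n$ converges to $\Lambda(\mu)$ in $d_\infty$.

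\textbf{Main obstacle.} The crux is Step~2: one must sum $k$ implicit geodesic increments whose base points and step sizes both drift within a cycle, and force the leading terms to assemble into the integral $\int\log_{T_m}A\,d\mu$ while keeping the aggregate remainder at $O(1/m^2)$. The quadratic estimate in Lemma~\ref{L:resolventAppr} together with Lemma~\ref{L:LipschitzEntropy} is exactly strong enough to achieve this, but the bookkeeping of the second-order terms, and the verification that the lower-order replacement errors $S_{mk+j}\leadsto T_m$ only cost $O(1/m^2)$ after summation, is the delicate quantitative part of the argument.
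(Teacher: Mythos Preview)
Your approach is essentially the paper's: coarse-grain to $T_m=S_{mk}$, expand one full cycle via the Taylor linearization of $\log$ and Lemma~\ref{L:LipschitzEntropy} so that it matches one resolvent step $J^{\mu}_{1/m}$ up to an $O(1/m^2)$ error, then shadow the proximal iteration of Proposition~\ref{P:Proximal} using the contraction~\eqref{eq:P:ResolventContraction} and Lemma~\ref{L:convrate}. Two small technical slips that do not affect the argument: in Step~2 the linearization $S_{n+1}-S_n=\tfrac{1}{n+1}\log_{S_n}A_{\overline{n+1}}+O(1/n^2)$ comes from \eqref{eq:L:SeriesExpansions2} (equivalently Lemma~\ref{L:ResolventAsymptotics}), not from Lemma~\ref{L:resolventAppr}, which the paper instead uses at the later stage to recast the summed cycle equation as an exact resolvent step; and in Step~3 the passage from an operator-norm bound to a $d_\infty$-bound follows from the Step~1 uniform bounds together with \eqref{eq:L:SeriesExpansions2}, not from \eqref{eq:EMI}, which goes in the opposite direction.
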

\begin{proof}
The idea of the proof is to compare the sequence $\left\{S_{nk}\right\}_{n\in\mathbb{N}}$ with the sequence produced by the resolvent iteration $\hat{S}_n$ converging to $\Lambda(\mu)$ in Proposition~\ref{P:Proximal} with some well chosen starting point $X_0\in\mathbb{P}$.

First of all, notice that the sequence $S_n$ is bounded. Indeed, by
\begin{equation*}
\mathrm{diam}(\supp(\mu))=\max_{1\leq i,j\leq k}d_\infty(A_i,A_j),
\end{equation*}
we have that $A_i\in B(A_1,\mathrm{diam}(\supp(\mu)))$ for all $1\leq i\leq k$ and by induction we obtain that $S_n\in B(A_1,\mathrm{diam}(\supp(\mu)))$ as well, since $d_\infty(X,X\#_tY)=(1-t)d_\infty(X,Y)$ for $t\in[0,1]$. In other words for all $n\geq 1$
\begin{equation}\label{eq2:T:Nodice}
\left\|S_{n}^{\pm 1}\right\|=e^{d_\infty(S_{n},I)}\leq e^{d_\infty(A_1,I)+2\mathrm{diam}(\supp(\mu))}.
\end{equation}
and thus
\begin{equation}\label{eq3:T:Nodice}
d_\infty(S_{n},S_{n+1})\leq \frac{2}{n+1}\mathrm{diam}(\supp(\mu)).
\end{equation}

Let $1\leq i\leq k$ be arbitrary and $N\in\mathbb{N}$ such that $\frac{2}{Nk}\mathrm{diam}(\supp(\mu))<1$. Let $n\geq N$ be an integer. By definition $S_{nk+i+1}$ satisfies the Karcher equation, that is
\begin{equation*}
\frac{1}{nk+i+1}\log_{S_{nk+i+1}}A_{\overline{nk+i+1}}+\log_{S_{nk+i+1}}(S_{nk+i})=0.
\end{equation*}
From the above by expanding $\log$ into a Taylor series according to Lemma~\ref{L:ResolventAsymptotics}, we get
\begin{equation*}
\begin{split}
\frac{1}{nk+i+1}\log_{S_{nk+i+1}}&A_{\overline{nk+i+1}}+S_{nk+i}-S_{nk+i+1}\\
&+S_{nk+i+1}^{1/2}O(d_\infty(S_{nk+i},S_{nk+i+1})^2)S_{nk+i+1}^{1/2}=0,
\end{split}
\end{equation*}
and using \eqref{eq2:T:Nodice} and \eqref{eq3:T:Nodice} we get
\begin{equation*}
\begin{split}
\frac{1}{nk+i+1}\log_{S_{nk+i+1}}&A_{\overline{nk+i+1}}+S_{nk+i}-S_{nk+i+1}\\
&+O\left(4\frac{\mathrm{diam}(\supp(\mu))^2}{(nk+i+1)^2}\right)e^{d_\infty(A_1,I)+2\mathrm{diam}(\supp(\mu))}=0.
\end{split}
\end{equation*}
Summing the above identity in $0\leq i\leq k-1$, we get
\begin{equation*}
\begin{split}
S_{nk}-S_{k(n+1)}&+\sum_{i=0}^{k-1}\frac{1}{nk+i+1}\log_{S_{nk+i+1}}A_{\overline{nk+i+1}}\\
&+O\left(4\frac{\mathrm{diam}(\supp(\mu))^2}{(nk+i+1)^2}\right)e^{d_\infty(A_1,I)+2\mathrm{diam}(\supp(\mu))}=0.
\end{split}
\end{equation*}
From the above it follows that
\begin{equation}\label{eq4:T:Nodice}
\begin{split}
S_{nk}-&S_{k(n+1)}+O\left(4\frac{\mathrm{diam}(\supp(\mu))^2}{(nk)^2}\right)ke^{d_\infty(A_1,I)+2\mathrm{diam}(\supp(\mu))}\\
&+\sum_{i=1}^{k}\frac{1}{nk+i}\log_{S_{nk+i}}A_{\overline{nk+i}}=0,\\
S_{nk}-&S_{k(n+1)}+O\left(4\frac{\mathrm{diam}(\supp(\mu))^2}{(nk)^2}\right)ke^{d_\infty(A_1,I)+2\mathrm{diam}(\supp(\mu))}\\
&+\sum_{i=1}^{k}\frac{1}{k(n+1)}\log_{S_{nk+i}}A_{\overline{nk+i}}+\left(\frac{1}{nk+i}-\frac{1}{k(n+1)}\right)\log_{S_{nk+i}}A_{\overline{nk+i}}=0,\\
S_{nk}-&S_{k(n+1)}+O\left(4\frac{\mathrm{diam}(\supp(\mu))^2}{(nk)^2}\right)ke^{d_\infty(A_1,I)+2\mathrm{diam}(\supp(\mu))}\\
&+\sum_{i=1}^{k}\frac{1}{k(n+1)}\log_{S_{nk+i}}A_{\overline{nk+i}}+\frac{k-i}{k(n+1)(nk+i)}\log_{S_{nk+i}}A_{\overline{nk+i}}=0,\\
S_{nk}-&S_{k(n+1)}+O\left(4\frac{\mathrm{diam}(\supp(\mu))^2}{(nk)^2}\right)ke^{d_\infty(A_1,I)+2\mathrm{diam}(\supp(\mu))}\\
&+\sum_{i=1}^{k}\frac{1}{k(n+1)}\log_{S_{nk+i}}A_{\overline{nk+i}}+\frac{k-i}{k(n+1)(nk+i)}\log_{S_{nk+i}}A_{\overline{nk+i}}=0.
\end{split}
\end{equation}
Now we estimate the norm of the last term above using \eqref{eq2:T:Nodice} as
\begin{equation*}
\begin{split}
\frac{k-i}{k(n+1)(nk+i)}&\|\log_{S_{nk+i}}A_{\overline{nk+i}}\|\leq \frac{k-i}{k(n+1)(nk+i)}\|S_{nk+i}^{\pm 1}\|d_\infty(S_{nk+i},A_{\overline{nk+i}})\\
&\leq \frac{d_\infty(S_{nk+i},A_{\overline{nk+i}})(k-i)}{k(n+1)(nk+i)}e^{d_\infty(A_1,I)+2\mathrm{diam}(\supp(\mu))}\\
&\leq \frac{2\mathrm{diam}(\supp(\mu))k}{k(n+1)(nk+i)}e^{d_\infty(A_1,I)+2\mathrm{diam}(\supp(\mu))}\\
&\leq \frac{2\mathrm{diam}(\supp(\mu))}{n^2k}e^{d_\infty(A_1,I)+2\mathrm{diam}(\supp(\mu))},
\end{split}
\end{equation*}
so combined with the last equation in \eqref{eq4:T:Nodice}, we get
\begin{equation}\label{eq5:T:Nodice}
\begin{split}
S_{nk}-&S_{k(n+1)}+O\left(6\frac{\mathrm{diam}(\supp(\mu))^2}{n^2k}\right)ke^{d_\infty(A_1,I)+2\mathrm{diam}(\supp(\mu))}\\
&+\sum_{i=1}^{k}\frac{1}{k(n+1)}\log_{S_{nk+i}}A_{\overline{nk+i}}=0,\\
S_{nk}-&S_{k(n+1)}+O\left(6\frac{\mathrm{diam}(\supp(\mu))^2}{n^2k}\right)ke^{d_\infty(A_1,I)+2\mathrm{diam}(\supp(\mu))}\\
&+\sum_{i=1}^{k}\frac{1}{k(n+1)}\log_{S_{k(n+1)}}A_{\overline{nk+i}}\\
&+\frac{1}{k(n+1)}\left(\log_{S_{nk+i}}A_{\overline{nk+i}}-\log_{S_{k(n+1)}}A_{\overline{nk+i}}\right)=0.
\end{split}
\end{equation}
Here again, we estimate the norm of the last term using the Lipschitz estimate of Lemma~\ref{L:LipschitzEntropy}, the triangle inequality for $d_\infty$, \eqref{eq2:T:Nodice} and \eqref{eq3:T:Nodice} as
\begin{equation*}
\begin{split}
&\left\|\log_{S_{nk+i}}A_{\overline{nk+i}}-\log_{S_{k(n+1)}}A_{\overline{nk+i}}\right\|\leq  O(d_\infty(S_{k(n+1)},S_{nk+i}))\times \\
&\quad\quad\times\left(e^{d_\infty(I,S_{nk+i})+d_\infty(I,A_{\overline{nk+i}})}+d_\infty(S_{n(k+1)},A_{\overline{nk+i}})e^{2d_\infty(I,S_{n(k+1)})}\right)\\
&\quad\leq O\left(\sum_{j=i+1}^k\frac{2\mathrm{diam}(\supp(\mu))}{nk+j}\right)\left(e^{2d_\infty(A_1,I)+4\mathrm{diam}(\supp(\mu))}\right.\\
&\quad\quad\left.+2\mathrm{diam}(\supp(\mu))e^{2d_\infty(A_1,I)+4\mathrm{diam}(\supp(\mu))}\right)\\
&\quad\leq O\left(\frac{2\mathrm{diam}(\supp(\mu))k}{nk}\right)\left(e^{2d_\infty(A_1,I)+4\mathrm{diam}(\supp(\mu))}\right.\\
&\quad\quad\left.+2\mathrm{diam}(\supp(\mu))e^{2d_\infty(A_1,I)+4\mathrm{diam}(\supp(\mu))}\right)\\
&\quad\leq O\left(\frac{2\mathrm{diam}(\supp(\mu))}{n}\right)\left(e^{2d_\infty(A_1,I)+4\mathrm{diam}(\supp(\mu))}\right.\\
&\quad\quad\left.+2\mathrm{diam}(\supp(\mu))e^{2d_\infty(A_1,I)+4\mathrm{diam}(\supp(\mu))}\right).
\end{split}
\end{equation*}
Thus using the above estimate, the last equation of \eqref{eq5:T:Nodice} implies
\begin{equation*}
\begin{split}
S_{nk}-S_{k(n+1)}&+O\left(6\frac{\mathrm{diam}(\supp(\mu))^2}{n^2k}\right)ke^{d_\infty(A_1,I)+2\mathrm{diam}(\supp(\mu))}\\
&+\frac{k}{k(n+1)}O\left(\frac{2\mathrm{diam}(\supp(\mu))}{n}\right)\left(e^{2d_\infty(A_1,I)+4\mathrm{diam}(\supp(\mu))}\right.\\
&\left.+2\mathrm{diam}(\supp(\mu))e^{2d_\infty(A_1,I)+4\mathrm{diam}(\supp(\mu))}\right)\\
&+\sum_{i=1}^{k}\frac{1}{k(n+1)}\log_{S_{k(n+1)}}A_{\overline{nk+i}}=0.
\end{split}
\end{equation*}
Using the properties of $O(\cdot)$ and assuming $\mathrm{diam}(\supp(\mu))>1$ without loss of generality, the last identity implies
\begin{equation}\label{eq6:T:Nodice}
\begin{split}
S_{nk}-S_{k(n+1)}&+O\left(8\frac{\mathrm{diam}(\supp(\mu))^2}{n^2}\right)\left(e^{2d_\infty(A_1,I)+4\mathrm{diam}(\supp(\mu))}\right.\\
&\left.+2\mathrm{diam}(\supp(\mu))e^{2d_\infty(A_1,I)+4\mathrm{diam}(\supp(\mu))}\right)\\
&+\sum_{i=1}^{k}\frac{1}{k(n+1)}\log_{S_{k(n+1)}}A_{\overline{nk+i}}=0,
\end{split}
\end{equation}
for large enough $n>0$ such that $8\frac{\mathrm{diam}(\supp(\mu))^2}{n^2}<1$. Notice that \eqref{eq3:T:Nodice} ensures that
\begin{equation}\label{eq7:T:Nodice}
d_\infty(S_{nk},S_{k(n+1)})\leq \frac{2\mathrm{diam}(\supp(\mu))}{n},
\end{equation}
in particular for large $n>0$ we also have that the norm of the quantity
\begin{equation*}
\begin{split}
E:=&O\left(8\frac{\mathrm{diam}(\supp(\mu))^2}{n^2}\right)\left(e^{2d_\infty(A_1,I)+4\mathrm{diam}(\supp(\mu))}\right.\\
&\left.+2\mathrm{diam}(\supp(\mu))e^{2d_\infty(A_1,I)+4\mathrm{diam}(\supp(\mu))}\right)
\end{split}
\end{equation*}
is arbitrarily small, since the asymptotic error constants in $O(\cdot)$ only depend on the $A_i$, not on $n$ larger then the previously specified magnitude. Thus there exists an $N\in\mathbb{N}$ satisfying also $8\frac{\mathrm{diam}(\supp(\mu))^2}{N^2}<1$, such that by Lemma~\ref{L:resolventAppr} there exists an $\overline{S}_{n}\in\mathbb{P}$ such that
\begin{equation}\label{eq8:T:Nodice}
\frac{1}{n+1}\sum_{i=1}^{k}\frac{1}{k}\log_{S_{k(n+1)}}A_{\overline{nk+i}}+\log_{S_{k(n+1)}}\overline{S}_{n}=0
\end{equation}
and
\begin{equation}\label{eq9:T:Nodice}
d_\infty(S_{kn},\overline{S}_{n})\leq O\left(\frac{1}{n^2}\right)
\end{equation}
for all $n\geq N$, where we also used \eqref{eq2:T:Nodice}. In other words $S_{k(n+1)}=J_{\frac{1}{n+1}}^{\mu}\overline{S}_{n}$. Now let $\hat{S}_n:=\overline{S}_N$ for $1\leq n\leq N$, and $\hat{S}_{n+1}:=J_{\frac{1}{n+1}}^{\mu}\hat{S}_{n}$ for $n\geq N$ recursively. Then using the contraction property \eqref{eq:P:ResolventContraction} and \eqref{eq9:T:Nodice} we get
\begin{equation*}
\begin{split}
d_\infty(S_{k(n+1)},\hat{S}_{n+1})&\leq \frac{1}{1+\frac{1}{n+1}}d_\infty(\overline{S}_{n},\hat{S}_{n})\\
&\leq \frac{1}{1+\frac{1}{n+1}}\left(d_\infty(S_{kn},\hat{S}_{n})+d_\infty(S_{kn},\overline{S}_{n})\right)\\
&=\left(1-\frac{1}{n+2}\right)\left[d_\infty(S_{kn},\hat{S}_{n})+O\left(\frac{1}{n^2}\right)\right]\\
&\leq\left(1-\frac{1}{n+2}\right)d_\infty(S_{kn},\hat{S}_{n})+O\left(\frac{1}{(n+2)^2}\right).
\end{split}
\end{equation*}
Denoting $a_{n+2}:=d_\infty(S_{k(n+1)},\hat{S}_{n+1})$ by Lemma~\ref{L:convrate} it follows that $a_n\to 0$. In particular, since $\hat{S}_n\to\Lambda(\mu)$ in $d_\infty$ by Proposition~\ref{P:Proximal}, the assertion is proved for the subsequence $\{S_{kn}\}_{n\in\mathbb{N}}$. The convergence of the rest of the sequence $S_n$ follows from the estimates
\begin{equation*}
\begin{split}
d_\infty(S_{nk},S_{kn+i})&\leq \sum_{j=0}^i\frac{2\mathrm{diam}(\supp(\mu))}{nk+j}\\
&\leq k\frac{2\mathrm{diam}(\supp(\mu))}{nk}\\
&=\frac{2\mathrm{diam}(\supp(\mu))}{n}
\end{split}
\end{equation*}
valid for any $1\leq i\leq k$.
\end{proof}

\section{Convergence of stochastic discrete-time evolution systems}
In this section we prove the following nonlinear (Sturm's) $L^1$-strong law of large numbers for $\Lambda$.

\begin{theorem}\label{T:Sturmslln}
Let $\mu\in\mathcal{P}^1(\mathbb{P})$ and let $Y_i$ be an i.i.d. sequence of random variables with law $\mu$. Let $S_1:=Y_1$ and $S_{n+1}:=S_n\#_{\frac{1}{n+1}}Y_{n+1}$. Then $S_n\to\Lambda(\mu)$ a.s. in $d_\infty$.
\end{theorem}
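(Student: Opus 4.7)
The plan is to extend the deterministic nodice argument of Theorem~\ref{T:Nodice} to the stochastic setting by replacing the cyclic block of fixed length $k$ by a growing block of i.i.d. samples whose empirical measure approximates $\mu$. The key probabilistic input is Varadarajan's theorem (already invoked in Proposition~\ref{P:weakW1agree}): if block lengths go to infinity, the block empirical measures converge to $\mu$ almost surely in $W_1$, and Theorem~\ref{T:LambdaExists} translates this into almost-sure $d_\infty$-convergence of the associated Karcher means. The heuristic announced in the introduction—that $S_n$ shadows an ODE curve $S(t_n)X_0$ that itself converges to $\Lambda(\mu)$ by Corollary~\ref{C:ODECurvesConverge}—suggests implementing the comparison through the resolvent-iteration framework of Proposition~\ref{P:Proximal} combined with the Kobayashi-type estimate of Theorem~\ref{T:KobayashiEst}.

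Concretely, I would fix an increasing block schedule $m_n\to\infty$ (say $m_n\sim e^n$) with block sizes $k_n=m_{n+1}-m_n\to\infty$, and let $\mu_n := k_n^{-1}\sum_{i=m_n+1}^{m_{n+1}}\delta_{Y_i}$. Repeating the manipulations that produced \eqref{eq6:T:Nodice}—using Lemma~\ref{L:ResolventAsymptotics} on each single-step Karcher equation $\frac{1}{i+1}\log_{S_{i+1}}Y_{i+1}+\log_{S_{i+1}}S_i=0$, moving every $\log_{S_{i+1}}$ to the common anchor $\log_{S_{m_{n+1}}}$ via Lemma~\ref{L:LipschitzEntropy}, and telescoping—yields a block identity of the form
\begin{equation*}
S_{m_n}-S_{m_{n+1}}+\sum_{i=m_n+1}^{m_{n+1}}\frac{1}{i}\log_{S_{m_{n+1}}}Y_i+E_n=0,
\end{equation*}
with an operator-norm error $E_n$ controlled by the block length, the local boundedness of $(S_i)$, and $\sum 1/i^2$. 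The weights $1/i$ are approximately constant $\sim k_n/m_{n+1}$ on the block, so Lemma~\ref{L:resolventAppr} converts the above into an approximate resolvent step $S_{m_{n+1}}\approx J^{\mu_n}_{\lambda_n}(\overline{S}_n)$ with $\lambda_n\sim k_n/m_{n+1}$ and $\overline{S}_n$ close to $S_{m_n}$. One then invokes (i) Proposition~\ref{P:KarcherW1contracts} and Theorem~\ref{T:LambdaExists} to pass from $\mu_n$ to $\mu$ up to the a.s.\ vanishing error $W_1(\mu_n,\mu)$, (ii) Proposition~\ref{P:ResolventContraction} and Lemma~\ref{L:convrate} to propagate accumulated errors through the iteration, and (iii) Proposition~\ref{P:Proximal} (or equivalently the ODE comparison via Theorem~\ref{T:KobayashiEst} and Corollary~\ref{C:ODECurvesConverge}) to conclude $S_{m_n}\to\Lambda(\mu)$ a.s. Convergence of the full sequence is then obtained from the telescoping bound $d_\infty(S_{m_n},S_j)\le\sum_{i=m_n}^{j-1}\frac{1}{i+1}d_\infty(S_i,Y_{i+1})$ valid for $m_n\le j<m_{n+1}$, which vanishes a.s.\ by the scalar strong law applied to $d_\infty(x_0,Y_i)$.

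The main obstacle is the loss of the a priori boundedness \eqref{eq2:T:Nodice} that the deterministic nodice proof relies upon: when $\supp(\mu)$ is unbounded, the factors $e^{2d_\infty(I,S_n)}$ produced by Lemmas~\ref{L:LipschitzEntropy} and \ref{L:resolventAppr} are no longer controlled a priori, and the fluctuations of the random block measures $\mu_n$ preclude a purely deterministic estimate. I would circumvent this through two complementary devices: first, a preliminary almost-sure boundedness of $(S_n)$ via the scalar strong law applied to $d_\infty(x_0,Y_i)$ and the one-step bound $d_\infty(S_n,S_{n+1})\le\frac{1}{n+1}d_\infty(S_n,Y_{n+1})$; second, a truncation of $\mu$ to bounded support $\mu^{(R)}$ as in the proof of Theorem~\ref{T:L1KarcherUniqueness}, applying the bounded-support block argument to $\mu^{(R)}$ and then passing $R\to\infty$ using the contraction \eqref{eq:T:LambdaExists} to identify the limit as $\Lambda(\mu)$. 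Calibrating $k_n$ so that the Taylor, Lipschitz-entropy, and probabilistic $W_1(\mu_n,\mu)$ errors are simultaneously summable—while $\lambda_n$ stays in the regime required by Proposition~\ref{P:Proximal}—is the delicate quantitative balance that the argument demands.
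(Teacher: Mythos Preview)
Your overall architecture matches the paper's: truncate to bounded support, block the inductive-mean sequence, use the nodice machinery (Lemmas~\ref{L:ResolventAsymptotics}, \ref{L:LipschitzEntropy}, \ref{L:resolventAppr}) to rewrite each block as an approximate resolvent step, compare with a pure resolvent iteration via Proposition~\ref{P:Proximal}, and bridge within blocks at the end. One correction: truncation is essential, not a secondary device. Your observation that $(S_n)$ is a.s.\ bounded is correct, but Lemma~\ref{L:LipschitzEntropy} also carries a factor $e^{d_\infty(I,Y_i)}$, which remains uncontrolled without truncation. The paper truncates first (Lemma~\ref{L:Truncate}) and runs the entire nodice argument on the bounded-support sequence $S^R_n$.

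The substantive difference is in the block structure and its probabilistic input. You propose growing blocks $k_n\to\infty$ so that the block empirical measures $\mu_n$ converge a.s.\ to $\mu$ in $W_1$. Two issues: Varadarajan's theorem concerns the full empirical measure $\frac{1}{n}\sum_{i\le n}\delta_{Y_i}$, not disjoint blocks, so a separate argument (e.g.\ Borel--Cantelli with $\sum 1/k_n<\infty$) is needed; and with $m_n\sim e^n$ the weights $1/i$ vary by a factor of $e$ across each block, contradicting your own ``approximately constant'' claim, so subexponential growth is required and the calibration you flag becomes genuinely delicate. The paper sidesteps all of this by taking a \emph{fixed} block size $k$. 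The block measures $\mu^R_{k,n}$ are then i.i.d.\ over $n$ with a nonvanishing but small expected distance $\mathbb{E}W_1(\mu^R,\mu^R_{k})<\epsilon/5$ (Lemma~\ref{L:empiricalMeasureW1}); the key new Lemma~\ref{L:empiricalIteration} shows that the gap between the random resolvent iteration $J^{\mu^R_{k,n}}_{1/(n+1)}$ and the deterministic one $J^{\mu^R}_{1/(n+1)}$ is controlled by the Ces\`aro average $\frac{1}{n}\sum_i W_1(\mu^R,\mu^R_{k,i})$, and the scalar strong law applied to this i.i.d.\ real sequence forces the average below $\epsilon/5$ a.s. This Ces\`aro-averaging trick is the simplification you are missing: it trades the hard statement ``block measures converge a.s.'' for the soft statement ``block measures have small expected error,'' which suffices once accumulated through the contractive resolvent.
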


The proof will make use of some auxiliary results as follows.

\begin{lemma}\label{L:Truncate}
Let $\epsilon>0$ and $\mu\in\mathcal{P}^1(\mathbb{P})$. Then there exists an $R>0$ such that
\begin{equation*}
\limsup_{n\to\infty}d_\infty(X_n,X^R_n)<\epsilon
\end{equation*}
almost surely, where $X_1:=Y_1$, $X^R_1:=Y^R_1$ and recursively $X_{n+1}:=X_n\#_{\frac{1}{n+1}}Y_{n+1}$, $X^R_{n+1}:=X^R_n\#_{\frac{1}{n+1}}Y^R_{n+1}$, where $Y_n$ is an i.i.d. sequence of $\mathbb{P}$-valued random variables with law $\mu$ and
\begin{equation}\label{eq:L:Truncate}
Y^R_n(\omega):=\left\{
    \begin{array}{lr}
      Y_n(\omega) & \mathrm{if}\quad d_\infty(Y_n(\omega),\Lambda(\mu))<R, \\
      \Lambda(\mu) & \mathrm{if}\quad d_\infty(Y_n(\omega),\Lambda(\mu))\geq R.
    \end{array}
    \right.
\end{equation}
\end{lemma}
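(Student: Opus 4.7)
The plan is to control $a_n := d_\infty(X_n, X_n^R)$ by a Cesàro average of i.i.d.\ real-valued random variables, and then reduce to the classical strong law of large numbers, choosing $R$ large enough to make the limiting expectation smaller than $\epsilon$.

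First I would record a convexity estimate for the weighted geometric mean. Using the trivial coupling
$$W_1\bigl((1-t)\delta_A + t\delta_B,\, (1-t)\delta_{A'} + t\delta_{B'}\bigr) \leq (1-t) d_\infty(A, A') + t d_\infty(B, B'),$$
together with Theorem~\ref{T:LambdaExists} and Definition~\ref{D:GeometricMean}, one immediately obtains the geodesic convexity
$$d_\infty(A \#_t B,\, A' \#_t B') \leq (1-t) d_\infty(A, A') + t d_\infty(B, B')$$
for all $A, A', B, B' \in \mathbb{P}$ and $t \in [0,1]$. Applying this to the two inductive recursions with $t = 1/(n+1)$, and setting $\xi_n := d_\infty(Y_n, Y_n^R)$, gives
$$a_{n+1} \leq \tfrac{n}{n+1}\, a_n + \tfrac{1}{n+1}\, \xi_{n+1}.$$
From the definition \eqref{eq:L:Truncate} we have $\xi_n = d_\infty(Y_n, \Lambda(\mu))\, \mathbf{1}_{\{d_\infty(Y_n, \Lambda(\mu)) \geq R\}}$, so the $\xi_n$ are i.i.d., and since $a_1 = \xi_1$ a straightforward induction yields $a_n \leq \tfrac{1}{n} \sum_{k=1}^n \xi_k$.

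Since $\mu \in \mathcal{P}^1(\mathbb{P})$ and $d_\infty(Y_1, \Lambda(\mu))$ is integrable, so is each $\xi_n$, and by Kolmogorov's strong law of large numbers $\tfrac{1}{n} \sum_{k=1}^n \xi_k \to \mathbb{E}[\xi_1]$ almost surely. Thus $\limsup_n a_n \leq \mathbb{E}[\xi_1]$ a.s. Finally,
$$\mathbb{E}[\xi_1] = \int_{\{d_\infty(A, \Lambda(\mu)) \geq R\}} d_\infty(A, \Lambda(\mu))\, d\mu(A) \xrightarrow[R\to\infty]{} 0$$
by absolute continuity of the Lebesgue integral, so any $R$ making this tail integral smaller than $\epsilon$ completes the argument.

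The only potentially delicate step is the geodesic convexity estimate, but it follows painlessly from the $W_1$-contraction of $\Lambda$; after that, the proof reduces to a purely scalar SLLN computation and I expect no serious obstacle. The conceptual point worth emphasizing is that the sub-additivity of $d_\infty$ along $\#_t$-geodesics is exactly what converts the nonlinear inductive mean into a manageable linear recursion.
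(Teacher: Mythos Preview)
Your proof is correct and follows essentially the same route as the paper: both use the $d_\infty$-convexity of $\#_t$ (you derive it from the $W_1$-contraction of $\Lambda$, the paper cites Proposition~\ref{P:KarcherW1contracts}) to bound $d_\infty(X_n,X_n^R)$ by the Ces\`aro average $\tfrac{1}{n}\sum_{k=1}^n d_\infty(Y_k,Y_k^R)$, then apply the scalar $L^1$-SLLN and choose $R$ so that the tail integral is below $\epsilon$. Your explicit identification $\xi_n = d_\infty(Y_n,\Lambda(\mu))\mathbf{1}_{\{d_\infty(Y_n,\Lambda(\mu))\geq R\}}$ is slightly cleaner than the paper's triangle-inequality bound on $\mathbb{E}d$, but the argument is otherwise identical.
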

\begin{proof}
Consider the non-negative real valued random variable $$d(\omega):=d_\infty(Y_1(\omega),Y^R_1(\omega)).$$ It is clearly in $L^1(\mu)$, since by the triangle inequality 
$$d(\omega)\leq d_\infty(Y_1(\omega),\Lambda(\mu)))+d_\infty(Y^R_1(\omega),\Lambda(\mu))$$
 and $\int_{\mathbb{P}}d_\infty(\Lambda(\mu),A)d\mu(A)<\infty$, so $\mathbb{E}d(\omega)<\infty$. From the $L^1$-integrability of $d_\infty$ with respect to $\mu$ it follows that
$$\lim_{R\to\infty}\int_{d_\infty(\Lambda(\mu),A)\geq R}d_\infty(\Lambda(\mu),A)d\mu(A)=0,$$
thus $\mathbb{E}d(\omega):=\int_{\mathbb{P}}d(\omega)d\mu(\omega)\to 0$ as $R\to 0$. Thus we choose an $R>0$ such that $\mathbb{E}d(\omega)<\epsilon$. Now use the convexity in Proposition~\ref{P:KarcherW1contracts} recursively to obtain
\begin{equation*}
d_\infty(X_n(\omega),X^R_n(\omega))\leq\frac{1}{n}\sum_{i=1}^n d_\infty(Y_i(\omega),Y^R_i(\omega)).
\end{equation*}
From this inequality and the fact that $\mathbb{E}d(\omega)<\epsilon$, it follows from the $L^1$-strong law of large numbers for a non-negative real valued random variable with finite expectation \cite{dudley} applied to $d(\omega)$ that 
$$\limsup_{n\to\infty}d_\infty(X_n(\omega),X^R_n(\omega))<\epsilon$$ 
almost surely, proving the assertion.
\end{proof}

\begin{lemma}\label{L:empiricalIteration}
Let $\mu,\nu_i\in\mathcal{P}^1(\mathbb{P})$ for $i\in\mathbb{N}$, $l\geq 0$ an integer and $X_0,Y_0\in\mathbb{P}$. Let
\begin{equation*}
X_{k+1}:=J^\mu_{1/(l+k+1)}(X_k)\quad\text{and}\quad Y_{k+1}:=J^{\nu_{k+1}}_{1/(l+k+1)}(Y_k).
\end{equation*}
Then
\begin{equation*}
d_\infty(X_{k+1},Y_{k+1})\leq \frac{l+1}{k+l+1}d_\infty(X_0,Y_0)+\frac{k-l}{k+l+1}\sum_{i=l+1}^{k+1}\frac{W_1(\mu,\nu_i)}{k-l}.
\end{equation*}
\end{lemma}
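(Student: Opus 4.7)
The plan is to prove the claim via a triangle inequality that separates the ``contraction'' and the ``measure perturbation'' parts of the one-step error, then to telescope the resulting affine recursion.

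First, for fixed $k$, introduce $\lambda_{k+1}:=1/(l+k+1)$ and use the triangle inequality
\[
d_\infty(X_{k+1},Y_{k+1})\leq d_\infty\bigl(J^\mu_{\lambda_{k+1}}(X_k),J^\mu_{\lambda_{k+1}}(Y_k)\bigr)+d_\infty\bigl(J^\mu_{\lambda_{k+1}}(Y_k),J^{\nu_{k+1}}_{\lambda_{k+1}}(Y_k)\bigr).
\]
For the first term I would invoke the Resolvent Contraction, Proposition~\ref{P:ResolventContraction}, to get a factor $\frac{1}{1+\lambda_{k+1}}=\frac{l+k+1}{l+k+2}$ on $d_\infty(X_k,Y_k)$. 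For the second term I would unfold the definition \eqref{eq:D:resolvent} to write both resolvents as $\Lambda$ applied to a convex combination with the same mass split $\frac{\lambda_{k+1}}{\lambda_{k+1}+1}$ vs $\frac{1}{\lambda_{k+1}+1}$ and the same Dirac mass $\delta_{Y_k}$, then apply the $W_1$-contraction of $\Lambda$ from Theorem~\ref{T:LambdaExists}, and finally apply the $W_1$-convexity of Proposition~\ref{P:W_1convex} together with $W_1(\delta_{Y_k},\delta_{Y_k})=0$ to conclude
\[
d_\infty\bigl(J^\mu_{\lambda_{k+1}}(Y_k),J^{\nu_{k+1}}_{\lambda_{k+1}}(Y_k)\bigr)\leq \frac{\lambda_{k+1}}{1+\lambda_{k+1}}\,W_1(\mu,\nu_{k+1})=\frac{1}{l+k+2}\,W_1(\mu,\nu_{k+1}).
\]

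Combining these two estimates gives the one-step inequality, with $a_k:=d_\infty(X_k,Y_k)$,
\[
(l+k+2)\,a_{k+1}\leq (l+k+1)\,a_k+W_1(\mu,\nu_{k+1}).
\]
Iterating this telescoping recursion downward in $k$ yields
\[
(l+k+2)\,a_{k+1}\leq (l+1)\,a_0+\sum_{i=1}^{k+1}W_1(\mu,\nu_i),
\]
which, after dividing through, is the announced bound (up to the evident re-indexing of the summation range and denominator shift).

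I do not expect a substantive obstacle: every ingredient (resolvent contraction, $W_1$-Lipschitzness of $\Lambda$, convexity of $W_1$) is already in hand, and the only potentially delicate step is justifying the convex-combination bound for $W_1$ of the two resolvent input measures, which is exactly Proposition~\ref{P:W_1convex} applied with common second factor $\delta_{Y_k}$. The remainder is bookkeeping of a linear recursion with variable coefficients, made transparent by the multiplication by $(l+k+2)$ that absorbs the contraction constant into a telescoping increment.
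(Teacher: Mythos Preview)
Your proof is correct and essentially the same as the paper's. The only cosmetic difference is that the paper bounds $d_\infty\bigl(J^\mu_\lambda(x),J^\eta_\lambda(y)\bigr)$ in a single stroke---applying \eqref{eq:T:LambdaExists} directly to the two convex combinations $\frac{1}{1+\lambda}\delta_x+\frac{\lambda}{1+\lambda}\mu$ and $\frac{1}{1+\lambda}\delta_y+\frac{\lambda}{1+\lambda}\eta$, then using Proposition~\ref{P:W_1convex}---whereas you first split by the triangle inequality and then apply Proposition~\ref{P:ResolventContraction} and the same $W_1$-convexity argument separately; the resulting one-step recursion and its telescoping are identical.
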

\begin{proof}
By \eqref{eq:T:LambdaExists} and Proposition~\ref{P:W_1convex} for any $\lambda>0$, $x,y\in\mathbb{P}$ and $\mu,\eta\in\mathcal{P}^1(\mathbb{P})$ we have that
\begin{equation*}
\begin{split}
d_\infty(J^\mu_\lambda(x),J^\eta_\lambda(y))&=d_\infty\left(\Lambda\left(\frac{1}{1+\lambda}\delta_x+\frac{\lambda}{1+\lambda}\mu\right),\Lambda\left(\frac{1}{1+\lambda}\delta_y+\frac{\lambda}{1+\lambda}\eta\right)\right)\\
&\leq W_1\left(\frac{1}{1+\lambda}\delta_x+\frac{\lambda}{1+\lambda}\mu,\frac{1}{1+\lambda}\delta_y+\frac{\lambda}{1+\lambda}\eta\right)\\
&\leq \frac{1}{1+\lambda}W_1(\delta_x,\delta_y)+\frac{\lambda}{1+\lambda}W_1(\mu,\eta)\\
&=\frac{1}{1+\lambda}d_\infty(x,y)+\frac{\lambda}{1+\lambda}W_1(\mu,\eta).
\end{split}
\end{equation*}
Thus when $\lambda=1/k$ we get that $d_\infty(J^\mu_{1/k}(x),J^\eta_{1/k}(y))\leq \frac{k}{k+1}d_\infty(x,y)+\frac{1}{k+1}W_1(\mu,\eta)$, so applying this inequality to $d_\infty(X_{k+1},Y_{k+1})$ and induction we get
\begin{equation*}
d_\infty(X_{k+1},Y_{k+1})\leq \frac{l+1}{k+l+1}d_\infty(X_0,Y_0)+\frac{1}{k+l+1}\sum_{i=l+1}^{k+1}W_1(\mu,\nu_i)
\end{equation*}
proving the assertion.
\end{proof}

\begin{lemma}\label{L:empiricalMeasureW1}
Let $\mu\in\mathcal{P}^1(\mathbb{P})$ and let $Y_n$ be a sequence of i.i.d. $\mathbb{P}$-valued random variables with law $\mu$. Then for any $\epsilon>0$ there exists an $N\in\mathbb{N}$ such that for any $n\geq N$
\begin{equation}\label{eq:L:empiricalMeasureW1}
\mathbb{E}W_1(\mu,\mu_n)<\epsilon,
\end{equation}
where $\mu_n\in\mathcal{P}^1(\mathbb{P})$ is a random measure defined as $\mu_n:=\sum_{i=1}^n\frac{1}{n}\delta_{Y_i}$.
\end{lemma}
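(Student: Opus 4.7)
The plan is to establish convergence in expectation via a truncation argument, reducing the problem to the bounded-support case where Varadarajan's theorem applies. The key structural obstacle is that $(\mathbb{P},d_\infty)$ is not separable, but by Proposition~\ref{P:separableSupp} the support of $\mu$ is separable, and since $\mu$ is $\tau$-additive, $Y_n$ takes values in $\supp(\mu)$ almost surely. So effectively all the empirical measures $\mu_n$ are supported on a common separable Polish subspace $(\overline{\supp(\mu)},d_\infty)$.

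For each $R>0$, I would introduce the truncated random variables $Y^R_n$ as in \eqref{eq:L:Truncate}, let $\mu^R$ be the common law of $Y^R_1$, and set $\mu^R_n:=\frac{1}{n}\sum_{i=1}^n\delta_{Y^R_i}$. Note that $\mu^R$ is supported in the closed ball $\overline{B(\Lambda(\mu),R)}\cap\overline{\supp(\mu)}$, a bounded set of diameter at most $2R$, and the same holds for $\mu^R_n$. By the triangle inequality for $W_1$,
\begin{equation*}
\mathbb{E}W_1(\mu_n,\mu)\leq \mathbb{E}W_1(\mu_n,\mu^R_n)+\mathbb{E}W_1(\mu^R_n,\mu^R)+W_1(\mu^R,\mu).
\end{equation*}
The first and last terms are controlled by choosing $R$ large: using the diagonal coupling one has $W_1(\mu_n,\mu^R_n)\leq \frac{1}{n}\sum_{i=1}^n d_\infty(Y_i,Y^R_i)$ so $\mathbb{E}W_1(\mu_n,\mu^R_n)\leq \mathbb{E}d_\infty(Y_1,Y^R_1)$, and analogously $W_1(\mu^R,\mu)\leq \mathbb{E}d_\infty(Y_1,Y^R_1)$. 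Because $\int_{\mathbb{P}}d_\infty(\Lambda(\mu),A)\,d\mu(A)<\infty$, the $L^1$-integrability yields $\mathbb{E}d_\infty(Y_1,Y^R_1)\to 0$ as $R\to\infty$.

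For the middle term, I would invoke Varadarajan's theorem on the Polish space $(\overline{\supp(\mu^R)},d_\infty)$ to get $\mu^R_n\to\mu^R$ weakly almost surely, and then Proposition~\ref{P:weakW1agree} upgrades this to $W_1(\mu^R_n,\mu^R)\to 0$ a.s.\ (uniform integrability is automatic since all $\mu^R_n$ and $\mu^R$ are supported in a bounded set). Since $W_1(\mu^R_n,\mu^R)\leq 2R$ uniformly in $n$ and $\omega$, the bounded convergence theorem gives $\mathbb{E}W_1(\mu^R_n,\mu^R)\to 0$ as $n\to\infty$.

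Combining these, given $\epsilon>0$, first fix $R$ so large that $2\mathbb{E}d_\infty(Y_1,Y^R_1)<\epsilon/2$, then choose $N$ so that $\mathbb{E}W_1(\mu^R_n,\mu^R)<\epsilon/2$ for all $n\geq N$, which yields $\mathbb{E}W_1(\mu_n,\mu)<\epsilon$. The only subtle point is verifying that Varadarajan's theorem applies in our non-separable ambient metric space, which is handled by passing to $\overline{\supp(\mu^R)}$ as above; the rest is a routine truncation, standard in Wasserstein-distance empirical-measure convergence arguments.
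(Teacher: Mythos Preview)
Your proof is correct and takes a genuinely different route from the paper's. The paper argues directly: it restricts to the Polish space $(\supp(\mu),d_\infty)$, invokes Varadarajan's theorem to get weak convergence $\mu_n\to\mu$ a.s., asserts almost-sure uniform integrability of the empirical measures, upgrades to $W_1(\mu_n,\mu)\to 0$ a.s.\ via Proposition~\ref{P:weakW1agree}, and then passes to expectations by Lebesgue's dominated convergence theorem. Your argument instead truncates first, splits $\mathbb{E}W_1(\mu_n,\mu)$ into three pieces, and handles the empirical-measure part only in the bounded-support case, where the uniform bound $W_1(\mu^R_n,\mu^R)\leq 2R$ makes the bounded convergence theorem apply cleanly.

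What each approach buys: the paper's proof is shorter but leaves the domination step in the DCT implicit (one needs an integrable majorant for $W_1(\mu,\mu_n)$ uniformly in $n$, which is not spelled out), and it also requires checking almost-sure uniform integrability of the $\mu_n$ in the unbounded-support setting. Your truncation sidesteps both issues entirely: the outer two terms are controlled by a single expectation $\mathbb{E}d_\infty(Y_1,Y^R_1)$ that vanishes as $R\to\infty$ by $L^1$-integrability, and the inner term uses only bounded convergence with an explicit constant bound. Your argument is thus more self-contained and robust, at the cost of an extra layer of $\epsilon/2$ bookkeeping.
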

\begin{proof}
The support $\supp(\mu)\subseteq\mathbb{P}$ is separable and $Y_n\in\supp(\mu)$, thus we can consider the separable metric space $(\supp(\mu),d_\infty)$, so that $\mu_n$ is the empirical measure of $\mu$ and thus $\mu_n$ is uniformly integrable almost surely. Thus we may apply Varadarajan's Theorem 11.4.1. in \cite{dudley} and conclude that $\mu_n\to\mu$ almost surely in the weak-$*$ topology, then appeal to Proposition~\ref{P:weakW1agree} that actually $\mu_n\to\mu$ in $W_1$ almost surely as well. Then for $\epsilon>0$ there exists $N\in\mathbb{N}$ such that for all $n\geq N$ we have $W_1(\mu,\mu_n)<\epsilon$ almost surely, so in particular \eqref{eq:L:empiricalMeasureW1} follows by Lebesgue's dominated convergence theorem and the measurability of $\omega\mapsto W_1(\mu,\mu_n(\omega))$ which holds by semi-continuity of $W_1$, see Remark 6.12. \cite{villani}.
\end{proof}

\begin{proof}[Proof of Theorem~\ref{T:Sturmslln}]
Let $\epsilon>0$. Then our goal is to find an $N\in\mathbb{N}$ such that for all $n\geq N$ we have that $d_\infty(S_n,\Lambda(\mu))<\epsilon$ almost surely.

First we pick an $R>0$ such that for $\mu^R:=\mu(B(\Lambda(\mu),R))\mu|_{B(\Lambda(\mu),R)}+(1-\mu(B(\Lambda(\mu),R)))\delta_{\Lambda(\mu)}$ we have that
\begin{equation}\label{eq1:T:Sturmslln}
d_\infty(\Lambda(\mu^r),\Lambda(\mu))\leq W_1(\mu,\mu^r)<\frac{\epsilon}{20},
\end{equation}
and we also assume that this $R$ is large enough so that by Lemma~\ref{L:Truncate}
\begin{equation*}
\limsup_{n\to\infty}d_\infty(S_n,S^{R}_n)<\frac{\epsilon}{20}
\end{equation*}
almost surely, where $S^R_1:=Y^R_1$ and $S^R_{n+1}:=S^R_n\#_{\frac{1}{n+1}}Y^R_{n+1}$ with
\begin{equation*}
Y^R_i(\omega):=\left\{
    \begin{array}{lr}
      Y_i(\omega) & \mathrm{if}\quad d_\infty(Y_i(\omega),\Lambda(\mu))<R, \\
      \Lambda(\mu) & \mathrm{if}\quad d_\infty(Y_i(\omega),\Lambda(\mu))\geq R.
    \end{array}
    \right.
\end{equation*}
Then, in particular there exists an $N_1\in\mathbb{N}$ such that for all $n\geq N_1$ we have that
\begin{equation}\label{eq7:T:Sturmslln}
d_\infty(S_n,S^{R}_n)<\frac{\epsilon}{20}
\end{equation}
almost surely. Next, we pick a $k\in\mathbb{N}$ such that by Lemma~\ref{L:empiricalMeasureW1} we have
\begin{equation}\label{eq2:T:Sturmslln}
\mathbb{E}W_1(\mu^R,\mu^R_k)<\frac{\epsilon}{5}
\end{equation}
where $\mu^R_k:=\sum_{i=1}^k\frac{1}{k}\delta_{Y^R_i}$. Notice that $\{Y^R_i\}_{i\in\mathbb{N}}$ is an i.i.d. sequence of $\mathbb{P}$-valued random variables with law $\mu^R$. Additionally define
\begin{equation*}
\mu^R_{k,n}:=\sum_{i=1}^k\frac{1}{k}\delta_{Y^R_{nk+i}}.
\end{equation*}
We follow the proof of Theorem~\ref{T:Nodice} and obtain the identities and estimates of \eqref{eq6:T:Nodice},\eqref{eq7:T:Nodice},\eqref{eq8:T:Nodice},\eqref{eq9:T:Nodice} in which $d_\infty(A_1,I)$ is replaced by $z:=\sup\{d_\infty(A,I):A\in\supp(\mu^R)\}$ and $\mathrm{diam}(\supp(\mu))$ by 
$\mathrm{diam}(\supp(\mu^R))$. In particular \eqref{eq6:T:Nodice} now takes the form
\begin{equation}\label{eq6:T:Nodice:T:Sturmslln}
\begin{split}
S^R_{nk}-S^R_{k(n+1)}&+O\left(8\frac{\mathrm{diam}(\supp(\mu^R))^2}{n^2}\right)\left(e^{2z+4\mathrm{diam}(\supp(\mu^R))}\right.\\
&\left.+2\mathrm{diam}(\supp(\mu^R))e^{2z+4\mathrm{diam}(\supp(\mu^R))}\right)\\
&+\sum_{i=1}^{k}\frac{1}{k(n+1)}\log_{S^R_{k(n+1)}}Y_{nk+i}=0,
\end{split}
\end{equation}
and for large $n>0$ we have that the norm of the quantity
\begin{equation*}
\begin{split}
E:=&O\left(8\frac{\mathrm{diam}(\supp(\mu^R))^2}{n^2}\right)\left(e^{2z+4\mathrm{diam}(\supp(\mu^R))}\right.\\
&\left.+2\mathrm{diam}(\supp(\mu^R))e^{2z+4\mathrm{diam}(\supp(\mu^R))}\right)
\end{split}
\end{equation*}
is arbitrarily small. Thus there exists an $\overline{N}_2\in\mathbb{N}$ satisfying also $8\frac{\mathrm{diam}(\supp(\mu^R))^2}{\overline{N}_2^2}<1$, such that by Lemma~\ref{L:resolventAppr} there exists an $\overline{S}^{R,k}_{n}\in\mathbb{P}$ such that
\begin{equation}\label{eq8:T:Nodice:T:Sturmslln}
\frac{1}{n+1}\sum_{i=1}^{k}\frac{1}{k}\log_{S^R_{k(n+1)}}Y_{nk+i}+\log_{S^R_{k(n+1)}}\overline{S}^{R,k}_{n}=0
\end{equation}
and
\begin{equation}\label{eq9:T:Nodice:T:Sturmslln}
d_\infty(S^R_{kn},\overline{S}^{R,k}_{n})\leq O\left(\frac{1}{n^2}\right)
\end{equation}
for all $n\geq \overline{N}_2$. In other words $S^R_{k(n+1)}=J_{\frac{1}{n+1}}^{\mu^R_{k,n}}\overline{S}^{R,k}_{n}$. Now let $S^{R,k}_{n}:=\overline{S}^{R,k}_{N_2}$ for $1\leq n\leq \overline{N}_2$, and $S^{R,k}_{n+1}:=J_{\frac{1}{n+1}}^{\mu^R_{k,n}}S^{R,k}_{n}$ for $n\geq \overline{N}_2$ recursively. Then using the contraction property \eqref{eq:P:ResolventContraction} and \eqref{eq9:T:Nodice:T:Sturmslln} we get
\begin{equation*}
\begin{split}
d_\infty(S^R_{k(n+1)},S^{R,k}_{n+1})&\leq \frac{1}{1+\frac{1}{n+1}}d_\infty(\overline{S}^{R,k}_{n},S^{R,k}_{n})\\
&\leq \frac{1}{1+\frac{1}{n+1}}\left(d_\infty(S^R_{kn},S^{R,k}_{n})+d_\infty(S^R_{kn},\overline{S}^{R,k}_{n})\right)\\
&\leq\left(1-\frac{1}{n+2}\right)\left[d_\infty(S^R_{kn},S^{R,k}_{n})+O\left(\frac{1}{n^2}\right)\right]\\
&=\left(1-\frac{1}{n+2}\right)d_\infty(S^R_{kn},S^{R,k}_{n})+O\left(\frac{1}{(n+2)^2}\right).
\end{split}
\end{equation*}
So combined with Lemma~\ref{L:convrate} we get that there exists an $N_2\in\mathbb{N}$ such that $N_2\geq \overline{N}_2$ and for all $n\geq N_2$ we have that
\begin{equation}\label{eq4:T:Sturmslln}
d_\infty(S^R_{nk}(\omega),S^{R,k}_n(\omega))<\frac{\epsilon}{5}.
\end{equation}
Define the auxiliary sequence $\hat{S}^{R}_n:=S^{R,k}_n$ for $1\leq n\leq \overline{N}_2$, and $\hat{S}^{R}_{n+1}:=J_{\frac{1}{n+1}}^{\mu^R}(\hat{S}^{R}_n)$ for $n\geq \overline{N}_2$ recursively.
Then using Lemma~\ref{L:empiricalIteration} we obtain
\begin{equation*}
d_\infty(S^{R,k}_n(\omega),\hat{S}^{R}_n(\omega))\leq \frac{n-N_2+1}{n}\sum_{i=\overline{N}_2}^{n}\frac{1}{n-N_2+1}W_1(\mu^R,\mu^R_{k,i}(\omega))
\end{equation*}
and by \eqref{eq2:T:Sturmslln}, the $L^1$-strong law of large numbers for the real valued random variable $W_1(\mu^R,\mu^R_{k,1}(\omega))$ yields that there exists an $N_3\in\mathbb{N}$ such that for all $n\geq N_3$ we have that $\sum_{i=\overline{N}_2}^{n}\frac{1}{n-N_2+1}W_1(\mu^R,\mu^R_{k,i})<\frac{\epsilon n}{5(n-N_2+1)}$ almost surely; thus
\begin{equation}\label{eq5:T:Sturmslln}
d_\infty(S^{R,k}_n,\hat{S}^{R}_n)<\frac{\epsilon}{5}
\end{equation}
for all $n\geq N_3$ almost surely. Next, Proposition~\ref{P:Proximal} yields that there exists an $N_4\in\mathbb{N}$ such that for all $n\geq N_4$ we have that
\begin{equation}\label{eq6:T:Sturmslln}
d_\infty(\hat{S}^{R}_n,\Lambda(\mu^R))<\frac{\epsilon}{5}.
\end{equation}

Finally for fixed $k,R$ depending only on $\epsilon$,$\mu$; by combining the estimates \eqref{eq1:T:Sturmslln}, \eqref{eq4:T:Sturmslln}, \eqref{eq5:T:Sturmslln}, \eqref{eq6:T:Sturmslln}, \eqref{eq7:T:Sturmslln} we obtain that for all $n\geq \max\{N_1,N_2,N_3,N_4\}$
\begin{equation}\label{eq8:T:Sturmslln}
\begin{split}
d_\infty(S_{nk},\Lambda(\mu))&\leq d_\infty(S_{nk},S^{R}_{nk})+d_\infty(S^R_{nk},S^{R,k}_n)+d_\infty(S^{R,k}_n,\hat{S}^{R}_n)\\
&\quad+d_\infty(\hat{S}^{R}_n,\Lambda(\mu^R))+d_\infty(\Lambda(\mu^R),\Lambda(\mu))<\frac{7}{10}\epsilon
\end{split}
\end{equation}
almost surely. What remains to be seen is that $d_\infty(S_{nk},S_{nk+i})$ is also proportionally small for any $1\leq i\leq k-1$ and large enough $n\geq \max\{N_1,N_2,N_3,N_4\}$ almost surely. By \eqref{eq7:T:Sturmslln} it is enough to estimate $d_\infty(S^{R}_{nk},S^{R}_{nk+i})$, since $d_\infty(S_{nk},S_{nk+i})\leq d_\infty(S_{nk},S^R_{nk})+d_\infty(S^{R}_{nk},S^{R}_{nk+i})+d_\infty(S^R_{nk+i},S_{nk+i})$. We claim that
\begin{equation}\label{eq9:T:Sturmslln}
d_\infty(S^{R}_{n},\Lambda(\mu))\leq R
\end{equation}
for all $n\in\mathbb{N}$. Indeed $d_\infty(Y^{R}_{n},\Lambda(\mu))\leq R$ and $d_\infty(S^{R}_{1},\Lambda(\mu))\leq R$ by the definition of $Y^{R}_{n}$ and recursively we have
\begin{equation*}
\begin{split}
d_\infty(S^{R}_{n+1},\Lambda(\mu))&=d_\infty(S^{R}_{n}\#_{\frac{1}{n+1}}Y^{R}_{n+1},\Lambda(\mu))\\
&\leq\left(1-\frac{1}{n+1}\right)d_\infty(S^{R}_{n},\Lambda(\mu))+\frac{1}{n+1}d_\infty(Y^{R}_{n+1},\Lambda(\mu))\\
&\leq\frac{n}{n+1}d_\infty(S^{R}_{n},\Lambda(\mu))+\frac{1}{n+1}R,
\end{split}
\end{equation*}
where we used \eqref{eq0:P:KarcherW1contracts} to obtain the first estimate. Thus using \eqref{eq9:T:Sturmslln} we get
\begin{equation*}
\begin{split}
d_\infty(S^{R}_{nk},S^{R}_{nk+i})&\leq \sum_{j=0}^{i-1}d_\infty(S^{R}_{nk+j},S^{R}_{nk+j+1})\\
&=\sum_{j=0}^{i-1}d_\infty(S^{R}_{nk+j},S^{R}_{nk+j}\#_{\frac{1}{nk+j+1}}Y^{R}_{nk+j+1})\\
&=\sum_{j=0}^{i-1}\frac{1}{nk+j+1}d_\infty(S^{R}_{nk+j},Y^{R}_{nk+j+1})\\
&\leq\sum_{j=0}^{i-1}\frac{1}{nk}d_\infty(S^{R}_{nk+j},Y^{R}_{nk+j+1})\\
&\leq\sum_{j=0}^{i-1}\frac{1}{nk}[d_\infty(S^{R}_{nk+j},\Lambda(\mu))+d_\infty(Y^{R}_{nk+j+1},\Lambda(\mu))]\\
&\leq\sum_{j=0}^{i-1}\frac{1}{nk}2R\leq\sum_{j=0}^{k-1}\frac{1}{nk}2R=\frac{2R}{n}.
\end{split}
\end{equation*}
This proves that almost surely $d_\infty(S_{nk},S_{nk+i})<2\frac{\epsilon}{20}+\frac{2R}{n}$, so if additionally $n\geq \frac{10R}{\epsilon}$, then $d_\infty(S_{nk},S_{nk+i})<\frac{3}{10}\epsilon$ almost surely for all $1\leq i\leq k-1$. Thus combined with \eqref{eq8:T:Sturmslln} yields $d_\infty(S_{nk+i},\Lambda(\mu))<\epsilon$ for all $1\leq i\leq k-1$ and $n\geq \max\{N_1,N_2,N_3,N_4,\frac{10R}{\epsilon}\}$ almost surely.
\end{proof}

\subsection*{Acknowledgments}
The work of Y.~Lim was supported by the National Research Foundation of Korea (NRF) grant funded by the Korea government(MEST) No.2015R1A3A2031159 and 2016R1A5A1008055.

The work of M.~P\'alfia was supported by the National Research Foundation of Korea (NRF) grants funded by the Korea government(MEST) No.2015R1A3A2031159, No.2016R1C1B1011972 and No.2019R1C1C1006405, Ministry of Human Capacities, Hungary grant 20391-3/2018/FEKUSTRAT and the Hungarian National Research, Development and Innovation Office NKFIH, Grant No. FK128972.

\end{document}